\documentclass[11pt]{amsart}
\usepackage[utf8]{inputenc}
\usepackage{amsmath,amsfonts,amssymb,amscd}
\usepackage{tikz-cd} 
\usepackage{xcolor}
\usepackage{float}
\usepackage[abs]{overpic}
\usepackage{hyperref}
\usepackage{mathrsfs}
\usepackage{graphicx}
\usepackage[normalem]{ulem}
\usepackage[nodisplayskipstretch]{setspace}
\setstretch{1}

\hypersetup{colorlinks=true, citecolor=blue, linkcolor=red}

\oddsidemargin=0.1in 
\evensidemargin=0.1in 
\textwidth=6.3in 
\newcommand{\rl}{{\mathbb{R}}}

\newcommand{\codim}{{\rm codim\,}}
\newcommand{\coker}{\rm coker\,}

\newcommand{\eps}{\varepsilon}

\newcommand{\tmop}[1]{\ensuremath{\operatorname{#1}}}
\renewcommand{\Re}{\tmop{Re}}
\renewcommand{\Im}{\tmop{Im}}

\newcommand{\C}{\mathbb C}
\newcommand{\calC}{\mathcal C}

\newcommand{\calK}{\mathcal K}
\newcommand{\N}{\mathbb N}

\newcommand{\R}{\mathbb R}
\newcommand{\calR}{\mathcal R}

\newcommand{\Cn}{\mathbb C^n}
\newcommand{\M}{\mathcal M}
\newcommand{\calS}{\mathcal S}
\newcommand{\W}{\mathcal W}
\newcommand{\cont}{\mathcal C}
\newcommand{\wt}{\widetilde}

\newcommand{\beas}{\begin{eqnarray*}}
\newcommand{\eeas}{\end{eqnarray*}}
\newcommand{\bes} {\begin{equation*}}
\newcommand{\ees} {\end{equation*}}
\newcommand{\be} {\begin{equation}}
\newcommand{\ee} {\end{equation}}
\newcommand{\bea} {\begin{eqnarray}}
\newcommand{\eea} {\end{eqnarray}}


\newcommand\partl[2]{\dfrac{\partial{#1}}{\partial{#2}}}

\newtheorem{theorem}{Theorem}
\newtheorem{lemma}[theorem]{Lemma}
\newtheorem{prop}[theorem]{Proposition}

\theoremstyle{definition}
\newtheorem{defn}[theorem]{Definition}
\newtheorem{rem}[theorem]{Remark}

\makeatletter
\renewcommand*\env@matrix[1][\arraystretch]{%
  \edef\arraystretch{#1}%
  \hskip -\arraycolsep
  \let\@ifnextchar\new@ifnextchar
  \array{*\c@MaxMatrixCols c}}
\makeatother
\begin{document}
\title{Polynomially convex embeddings and CR singularities of real manifolds} 
\date{\today}
\author{Purvi Gupta}
\address{Department of Mathematics, Indian Institute of Science, Bangalore 560012, India}
\email{purvigupta@iisc.ac.in}
\author{Rasul Shafikov}
\address{Department of Mathematics, University of Western Ontario,  London, Ontario N6A 5B7, Canada}
\email{shafikov@uwo.ca}

\begin{abstract}
It is proved that any smooth manifold $\M$ of dimension $m$ admits a smooth polynomially convex embedding into $\C^n$ when $n \ge \lfloor 5m/4\rfloor$. Further, such embeddings are dense in the space of 
smooth maps from $\M$ into $\C^n$ in the $\cont^3$-topology. The components of any such embedding give smooth generators of the algebra of complex-valued continuous functions on $\M$. A key ingredient of the proof is a coordinate-free description of certain notions of (non)degeneracy, as defined by Webster and Coffman, for CR-singularities of order one of an embedded real manifold in $\Cn$. The main result is obtained by inductively perturbing each stratum of degeneracy to produce a global polynomially convex embedding.
\end{abstract}

\maketitle

\tableofcontents
\section{Introduction}

A common theme in modern geometry and analysis is the existence of embeddings of manifolds into Euclidean spaces that satisfy additional properties. For example, isometric embeddings of Riemannian manifolds, Lagrangian embeddings into $\R^{2n}$ equipped with the standard symplectic structure, proper holomorphic embeddings of complex manifolds into $\C^n$, etc., all fall under this category. In this spirit, we are interested in the polynomially convex embedding problem: {\it find the smallest dimension $n$ such that any closed smooth real $m$-dimensional manifold admits a smooth polynomially convex  embedding into $\C^n$.} 
Recall that a compact set $K\subset \C^n$ is called polynomially convex, if its polynomially convex hull, defined as 
$$
\widehat K = \{z\in \C^n : |P(z) | \le ||p||_K \text{ for all holomorphic polynomials } P\},
$$ 
agrees with $K$, i.e., $K=\widehat K$. Polynomial convexity is important because of its connection with the approximation theory: by the classical Oka-Weil theorem, a holomorphic function in a neighbourhood of $K$ can be uniformly approximated on $K$ by holomorphic  polynomials.

In this formulation of the above embedding problem, the required dimension $n$ depends only on $m$ and not on the choice of a particular $m$-manifold. It is well-known, see, e.g., Stout~\cite{St}, that 
$n>m$ as no closed $n$-dimensional submanifold of $\C^n$ can be polynomially convex, and through totally real embeddings 
one, can take $n=\lfloor \frac{3m}{2}\rfloor$. Recall that a manifold $M\subset \C^n$ is called totally real if for any 
$p\in M$, the tangent space $T_p M $, considered as a linear subspace of $T_p \C^n$, does not contain any complex 
subspaces of positive dimension.
For certain classes of manifolds, such as $m$-spheres, the optimal value is already $n=m+1$. The same holds true for all closed $m$-manifolds, if we consider topological and not smooth embeddings, see Vodovoz-Zaidenberg~\cite{VZ}. For smooth embeddings, a better bound for $n$ 
(than $\lfloor \frac{3m}{2}\rfloor$) was given  in \cite{GS1} and \cite{GS2}.
But it general, it is a difficult problem to determine the optimal $n$ for smooth embeddings of all $m$-manifolds. In this paper, we give a further improvement on the dimension $n$. Our main result is as follows. 

\begin{theorem}\label{t.main}
Let ${\bf M}_m$ the class of all smooth closed (i.e., compact without boundary) manifolds of dimension $m>0$. Let $\mathcal M \in {\bf M}_{m}$. 
Let $n \ge \lfloor 5m/4\rfloor$. 
Then, given any $\calC^\infty$-smooth embedding $f: \mathcal M\hookrightarrow \Cn$, there exists a $\calC^3$-small perturbation of 
$\widetilde f$ of $f$ such that $\widetilde f: \mathcal M\hookrightarrow\Cn$ is a smooth embedding and $\widetilde f(\mathcal M)$ is 
polynomially convex in $\Cn$. If $\tilde f =(f_1,\dots, f_n)$ is a coordinate representation, any continuous complex-valued function on $\M$ can be approximated uniformly on $\M$ by functions of the form $P(f_1, \dots, f_n)$, where $P(z_1,\dots,z_n)$ are holomorphic polynomials.
\end{theorem}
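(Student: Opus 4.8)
The plan is to realize the perturbed image $\widetilde f(\M)$ as a compact set that is totally real off a controlled, stratified CR-singular locus, and then to upgrade local polynomial convexity to a global statement by a downward induction over the strata of degeneracy. First I would reduce to a \emph{generic} embedding. Applying jet transversality to the (twisted) Gauss map $\M\to \mathrm{Gr}_\R(m,2n)$, an arbitrarily $\calC^\infty$-small perturbation of $f$ may be assumed to meet each degeneracy stratum transversally. A dimension count in the Grassmannian shows that the set of real $m$-planes $V\subset\Cn$ with $\dim_\C(V\cap JV)\ge k$ has codimension $2k(n-m+k)$ in $\mathrm{Gr}_\R(m,2n)$; hence for a generic embedding the CR-singular set $\Sigma\subset\M$ of order $\ge 2$ has dimension $5m-4n-8$, which is negative precisely in the range $n\ge\lfloor 5m/4\rfloor$. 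So in our range $\Sigma$ consists entirely of CR singularities of order one, i.e.\ the complex tangent space along $\Sigma$ is everywhere exactly one complex line, and $\M\setminus\Sigma$ is totally real.

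Using the coordinate-free (non)degeneracy notions of Webster and Coffman established earlier in the paper, I would stratify $\Sigma=\Sigma_0\supset\Sigma_1\supset\cdots\supset\Sigma_r$ by the rank/type of the quadratic invariant of the order-one CR singularity, arranging (after a further small generic perturbation, and here again using that $n\ge\lfloor 5m/4\rfloor$ so that these sub-strata are thin enough to be put in general position) that each difference $\Sigma_j\setminus\Sigma_{j+1}$ is a smooth submanifold of $\M$ and $\Sigma_r$ is a closed submanifold. The local models are then as follows. On $\M\setminus\Sigma$ the embedding is totally real, hence locally polynomially convex with a neighbourhood basis of polynomially convex sets. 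Near a point of $\Sigma_j\setminus\Sigma_{j+1}$, the normal form from the earlier sections lets me write $\widetilde f(\M)$, after a $\calC^3$-small perturbation supported in a small neighbourhood, as a product of a totally real factor with a higher-codimensional analogue of a Bishop manifold whose quadratic part lies in the non-elliptic (``hyperbolic'') regime; the extra codimension $n-m$ supplied by the hypothesis is what makes room to move the quadratic coefficients into that regime while keeping $\widetilde f$ an embedding. Such a model supports no nonconstant analytic disk and is locally polynomially convex, again with a polynomially convex neighbourhood basis.

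Next I would globalize by downward induction on $j$. Suppose inductively that, after the perturbations performed so far, $\widetilde f(\M)$ is polynomially convex away from a small neighbourhood $U_{j+1}$ of $\Sigma_{j+1}$; on a slightly larger neighbourhood of $\Sigma_j$, disjoint from the loci where earlier perturbations acted, perform a $\calC^3$-small perturbation installing the non-elliptic local model along $\Sigma_j\setminus\Sigma_{j+1}$, and then patch. Since every piece is locally polynomially convex with a polynomially convex neighbourhood basis and the manifold carries no nonconstant analytic disk, a covering by finitely many polynomially convex pieces satisfying a Kallin-type separation hypothesis — or, equivalently, the construction of a nonnegative plurisubharmonic function on $\Cn$ vanishing exactly on $\widetilde f(\M)$ and strictly plurisubharmonic off it — yields polynomial convexity of the whole image. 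The base case $j=r$ is handled directly by the local model at the closed submanifold $\Sigma_r$. I expect the main obstacle to be exactly this patching/globalization step: controlling the polynomial hull uniformly over all of $\M$ once the local models are in place, and in particular guaranteeing that each inductive perturbation near a deeper stratum does not create new hull outside the shrinking neighbourhood where the previous strata were already resolved — making the passage ``no analytic disks $\Rightarrow$ polynomially convex'' rigorous is the technical heart of the argument.

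Finally, the approximation statement follows formally. Identifying $\M$ with $K:=\widetilde f(\M)\subset\Cn$, polynomial convexity of $K$ gives, via the Oka--Weil theorem, uniform approximation on $K$ of functions holomorphic near $K$ by holomorphic polynomials; and since $K$ is totally real off the lower-dimensional set $\widetilde f(\Sigma)$ (which is itself polynomially convex by the induction, so that polynomials are dense in $\calC(\widetilde f(\Sigma))$), the O'Farrell--Preskenis--Walsh theorem upgrades this to $\calC(K)=P(K)$. In terms of a coordinate representation $\widetilde f=(f_1,\dots,f_n)$ this says precisely that every continuous complex-valued function on $\M$ is a uniform limit on $\M$ of functions $P(f_1,\dots,f_n)$ with $P$ a holomorphic polynomial.
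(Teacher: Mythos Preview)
Your transversality setup, the dimension count forcing all CR singularities to have order one, and the final O'Farrell--Preskenis--Walsh step are correct and match the paper. But the core mechanism you propose---install a locally polynomially convex ``non-elliptic'' model at every CR-singular point and then patch via Kallin or a global plurisubharmonic function---is not what the paper does, and has two concrete gaps.

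First, at Coffman \emph{degenerate} points (the stratum $\mathcal C_1$) there is no normal form, so ``installing a non-elliptic local model'' there has no content; the paper never attempts local polynomial convexity near such points. Instead it uses Proposition~\ref{P:PertII}: in this dimensional range $\mathcal W_0=\emptyset$, so the whole CR-singular set $S=S_1$ is itself a \emph{totally real} submanifold of $\Cn$, and $\mathcal C_1\subset S$ has dimension $\le 1$. One then makes $f(\mathcal C_1)$ polynomially convex directly---trivially as a finite set when $m\equiv 2\pmod 4$, and as a union of smooth closed curves when $m\equiv 3\pmod 4$, the latter via an \emph{ad hoc} argument about one-dimensional hulls (Lemma~\ref{L:tang} and the perturbation of cubic coefficients that follows it).

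Second, the globalization is not local polynomial convexity plus Kallin, and ``no analytic disks $\Rightarrow$ polynomially convex'' is false in general. The engine is an iterated Arosio--Wold perturbation of a totally real manifold relative to a polynomially convex compact (Lemma~\ref{L:1}, Proposition~\ref{l.pertlemma}): given $K$ polynomially convex with the complement totally real, one perturbs off $K$ to squeeze the hull into successively smaller neighbourhoods of $K$. This is run twice. First on the totally real manifold $S$ relative to a polynomially convex neighbourhood of $C_1$ (Proposition~\ref{P:Amb}); here the Coffman normal form at $\mathcal C_2$-points is used \emph{not} for local polynomial convexity of $M$ but only to match each perturbation of $S$ with an ambient perturbation of $M$ that keeps $S$ as its CR-singular set (Lemma~\ref{L:AmbPertNonDeg}). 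Then, once $S$ is polynomially convex, the same engine is run on $M$ relative to $S$. You correctly flag the patching step as the technical heart, but the tool you reach for is the wrong one.
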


Note that the theorem implies that polynomially convex embeddings of $\M$ are dense in the $\calC^3$-topology 
on the space of smooth maps
$f : \M \to \C^n$. Furthermore, the above result is optimal for $m\leq 7$. 
The approximation-theoretic statement in the theorem gives an upper bound on the size of the minimal set of smooth generators of the continuous algebra on $\M$. Such a statement is one of the primary applications of the polynomially convex emebedding problem, and can be viewed as a Weierstrass-type approximation theorem on $\M$.


If a manifold $\M$ of dimension $m<n$ admits a totally real embedding into $\C^n$, the result follows from the work of Forstneri\v c-Rosay~\cite{FR}, Forstneri{\v c}~\cite{Fo94}, and Low-Wold~\cite{LW}. However, by Ho-Jacobowitz-Landweber~\cite{HJL}, for every $n <\lfloor \frac{3m}{2}\rfloor$ there exists $\M \in {\bf M}_m$ that does not admit a totally real embedding into $\C^n$. Thus, in further considerations, an important role is played by the set of so-called CR-singularities, i.e, the points where the tangent plane to an embedded $\M$ inherits complex directions from the ambient $\C^n$. More precisely, in the context of the embedding problem we consider a real submanifold $M\subset \C^n$ which is totally real on a dense open set. If $p \in M$ is not a totally real point, then $\dim T_p M \cap JT_p M = k >0$, and we call $p$ a CR-singularity of order $k$. Here $J$ is the standard complex structure on $\C^n$. 

In complex dimension 2, CR-singularities, known as complex points, are well-studied. For example, properly counted, the complex points of a real surface $M \subset \C^2$ give the Euler characteristic of $M$, see Lai~\cite{L}. 
In higher dimensions, much less is known. By a standard transversality argument (see Domrin~\cite{D}, Forstneri\v c~\cite{For},  and Proposition~\ref{P:PertI} below) one can show that the set of CR-singularities can be stratified into (nonclosed) manifolds of certain dimensions. Under the assumption of real-analyticity and some nondegeneracy conditions, Beloshapka for $(m,n)=(4,5)$ and Coffman for 
$\frac{2}{3}(n+1) \le m < n$ constructed a normal form, i.e., a model submanifold that is locally biholomorphically 
equivalent to the original manifold, in a neighbourhood of a given CR-singular point of order 1. It was proved in \cite{GS1} that the Coffman model manifolds are locally polynomially convex. At the same time, very little is known about the geometry and convexity of real submanifolds near CR-singularities of higher order. Further, the nondegeneracy conditions that allow for the construction of the Coffman normal form are defined purely locally, in terms of coefficients of second-order terms of the defining functions of the manifold, making it difficult to describe the global behaviour of the locus of degenerate CR-singular points. 

Generalizing the work of Bishop~\cite{Bi}, for suitable immersed manifolds of dimension $m$ in $\C^m$, Webster constructed an invariant function on the set of order 1 CR-singularities whose sublevel sets characterize CR-singularities as hyperbolic, parabolic, and elliptic. Further, by constructing certain vector bundles on the set of CR singularities, Webster defined the parabolic index of CR-singular points. 
In this work, we use Webster's ideas to give a general description and stratification of certain degeneracies of order-one CR-singular points of an embedded manifold $M \subset \C^n$. These are defined in terms of the rank of the projection between certain vector bundles defined on the submanifold $S_1$ of CR-singularities of order 1. In particular, this identifies the set of CR-singular points where the manifold of CR-singularities is itself not totally real. In \cite{C}, Coffman defined a different notion of degeneracy for CR-singularities of order 1, that is relevant for the construction of normal forms. Again, we were able to characterize these degeneracies in terms of certain globally defined  vector bundles (on $S_1$). This gives a coordinate-free representation and a global stratification of $S_1$ into manifolds of these degeneracies, whose dimensions depend on the pair $(m,n)$. This is summarized in Proposition~\ref{P:PertII}.

Stratifying the set of CR-singularities into sets of various degeneracies gives us an inductive way of constructing a small perturbation of a given embedding $f: \M \to \C^n$ that is  polynomially convex. In the dimensional setting of our main theorem, all the CR-singularities of a generically embedded $M\subset\Cn$ are of order $1$, and form a totally real submanifold $S_1\subset M$. Furthermore, the set of Coffman degenerate points, $C_1\subset S_1$, is a submanifold of dimension at most one, i.e., is either a finite set or a disjoint union of smooth closed curves. We show that a small perturbation of $f$ makes $C_1$ polynomially convex. Using the result of Arosio-Wold~\cite{ArWo19}, we perturb the remaining part of $S_1$ (while keeping a neighbourhood of $C_1$ in $M$ fixed) to make it polynomially convex. This is matched with an ambient perturbation of $M$, so that the perturbed $S_1$ is realized as the CR-singular set of the perturbed $M$.  
Finally, we perturb the totally real portion of the embedded manifold to make all of $M$ polynomially convex. 

Another fundamental problem in the theory of polynomial convexity is the construction of compact sets whose polynomially 
convex hull is nontrivial but does not contain any analytic structure, i.e., the hull contains no nonconstant analytic discs. 
Examples of such compact sets goes back to the work of Stolzenberg~\cite{S}. In~\cite{IS}, Izzo-Stout raised the question of finding the optimal $n$ such that any compact  $m$-manifold can be smoothly embedded into $\C^n$ as a nonpolynomially convex compact whose hull contains no analytic structure. Note that unlike the polynomially convex embedding problem, where one cannot obtain a bound better than $n=m+1$, no such lower bound exists for embeddings with nontrivial hulls lacking analytic structure. We remark that 
the same argument as in our previous work \cite{GS2} shows that in the dimensional setting of Theorem~\ref{t.main}, there exists an embedding of $\M$ which is not polynomially convex, but whose hull contains no analytic structure.
\smallskip 

\noindent{\bf Organization of the paper.} The paper is organized as follows. In Section~\ref{S:PertI}, we describe a stratification of the set of CR singularities of a generic compact $m$-fold in $\Cn$ in terms of the order of the singularities (Proposition~\ref{P:PertI}). In Section~\ref{S:nondeg}, we focus on CR singularities of order $1$, and define some notions of degeneracy for such points in terms of globally defined geometric objects (independent of local coordinates). The main result in this section (Proposition~\ref{P:LocCoord}) is a description of these notions in terms of local coordinates. In Section~\ref{S:strat}, we describe how the various sets of degeneracy introduced in the previous section stratify the set of order-one CR singularities. The dimensions of these sets (in general position) is described in Proposition~\ref{P:PertII}. 
In Section~\ref{S:pert}, we give a refinement of previously known results on global perturbations of totally real manifolds (off of polynomially convex sets). In Section~\ref{S:C2}, we show that a manifold with only Coffman nondegenerate CR singularities off of a polynomially convex set can be globally perturbed to be polynomially convex. In Section~\ref{S:main}, we complete the proof of the main theorem, by tackling the case where degenerate CR singularities appear. 
\smallskip 

\noindent {\bf Notation.} Throughout the paper, we denote by $\M$ an abstract smooth compact manifold of dimension $m$. If $f: \M \to\C^n$ is a smooth map, then we set $M=f(\M)$. We also follow the convention of denoting objects associated to the abstract $\M$ by calligraphic letters, and their images 
in $\Cn$ by Roman letters. For example, for $\calS\subset \M$, we set $S = f(\calS)$.
\smallskip 

\noindent{\bf Remark.}
On March 26, 2025, after the work on this paper was essentially complete, a paper titled ``The polynomially convex embedding dimension for real manifolds of dimension $\le 11$" by Leandro Arosio, H\r akan Samuelsson Kalm, and Erlend F. Wold, appeared on the ArXiv, see https://arxiv.org/abs/2503.19765. This paper discusses similar results on polynomially convex embeddings of real manifolds in $\C^n$. We remark that this paper is written completely independently, and we have not used 
any results or ideas formulated in the preprint by Arosio, Samuelsson Kalm, and Wold, apart from those that already appeared in the papers by L\o w-Wold~\cite{LW} and Arosio-Wold~\cite{ArWo19} as cited here.
\smallskip 

\noindent {\bf Acknowledgments.}
The authors would like to thank Stefan Nemirovski for suggesting Proposition~\ref{l.pertlemma}, which was the starting point for the work presented in this paper. We would also like to thank Adam Coffman for valuable discussions, and comments on a preliminary version of this paper. The first author is partially supported by the  DST FIST Program - 2021 [TPN700661] and an ANRF MATRICS grant [MTR/2023/000393]. The second author is partially supported by Natural Sciences and Engineering Research Council of Canada.

\section{A stratification of the set of CR singular points}\label{S:PertI}
In this section, a stratification of the set of CR singularities of a generic compact $m$-manifold $M\subset\Cn$ into sets of CR singularities of different orders is given. The main result (Proposition~\ref{P:PertI}) is known in the literature (e.g., see \cite{D}), but we cast it in language that is of use later. 

Throughout this section, $J$ denotes the standard complex structure on $\Cn$. 
Let $f: \M\hookrightarrow \C^n$ be a smooth embedding, $m\leq n$, and $M=f(\M)$. For $q\in M$, let $H_q M$ denote the maximal complex subspace of $T_q M$. For $p\in \M$, let $\mathcal H_p=f^*(H_{f(p)} M)$. Since $H_{f(p)} M$ is $J$-invariant, $\mathcal J=(df^{-1}\circ J\circ df)_p$ gives a complex structure on $\mathcal H_p$. By complexifying $\mathcal H_p$ we extends $\mathcal J$ to $\mathcal J_\C$, and so we obtain the $+i$ and $-i$ eigenspaces of $\mathcal J_\C$ given by 
	\beas
		(\mathcal H_\C)_p&=&\text{span}_\C\{v\otimes 1-\mathcal J(v)\otimes i:v\in \mathcal H_p\}\\
		(\overline {\mathcal H_\C})_p&=&\text{span}_\C\{v\otimes 1+\mathcal J(v)\otimes i:v\in \mathcal H_p\}.
	\eeas

Now, we associate a Gauss map $\Phi_f$ to $f$ as follows. Consider $df:T\M\rightarrow T\C^n|_{M}\cong \mathcal M\times\Cn$. Complexifying $T\M$ extends $df$ to $T^{\C\!}\M=T\M\otimes\C$ as follows:
\beas 
	df_\C:T^{\C\!} \M&\rightarrow &\M\times\C\\
	(p,v\otimes \alpha)&\mapsto&(p,\alpha df_p(v)), \quad v\in T_p \M, \ \alpha\in\C, \ p\in \M. 
	\eeas
In terms of matrix representations, if 
	\bes
		A=\begin{pmatrix}
			a_{11} & \cdots & a_{1m}\\
		c_{11} & \cdots & c_{1m}\\
				\vdots & \ddots & \vdots\\
				 a_{n1} & \cdots & a_{nm}\\
				 c_{n1} & \cdots & c_{nm}
		\end{pmatrix}_{2n\times m}\in\operatorname{Mat}_{2n\times m}(\rl),
	\ees
then
	\bes
		A_\C=\begin{pmatrix}
			a_{11}+ic_{11} & \cdots & a_{1m}+ic_{1m}\\
				\vdots & \ddots & \vdots\\
				 a_{n1}+ic_{n1} & \cdots & a_{nm}+ic_{nm}\\
		\end{pmatrix}_{n\times m}\in \operatorname{Mat}_{n\times m}(\C),
	\ees
where $\operatorname{Mat}_{r\times s}(F)$ denotes the space of $r\times s$ matrices over the field $F$. Note that $\ker (df_\C)(p)=(\overline {\mathcal H_\C})_p$. Define $\Phi_f\in\Gamma(\M,\operatorname{Hom}_\C(T^{\C\!}\M,\M\times\Cn))$ by
	\be\label{E:Gauss map}
		\Phi_f:p\mapsto (df_\C)(p) .
	\ee

Given two complex vector bundles $P$ and $Q$ over $\M$ of rank $m\leq n$, respectively, denote by 
$\operatorname{Hom}_\C^\nu(P,Q)$, $1\leq \nu\leq m$,  the fiber bundle over $\M$ where the fiber over 
$p\in \M$ is the space of $\C$-linear transformations from $P_p$ to $Q_p$ that are of rank $m-\nu$. Then, 
$\Phi_f(p)\in \operatorname{Hom}_\C^\nu(T^{\C\!} \M,\M\times\Cn)$ if and only if $f(p)$ is a CR-singularity of $M$ 
of order $\nu$, $\nu=1,\dots,\lfloor m/2\rfloor$. Let $\calS_\nu\subset \M$ denote the set whose image under $f$ is 
the set of CR-singularities of $M$ of order $\nu$, $\nu=1,...,\lfloor m/2\rfloor$. Let 
$\calS=\cup_{\nu=1}^{\lfloor m/2\rfloor }{\calS_\nu}$. 

Let $C^\infty(\M,\Cn)$ be the space of smooth maps from $\M$ to $\Cn$ equipped with the Whitney topology. A set $A \subset C^\infty(\M,\Cn)$ is called residual if it is a countable intersection of dense open subsets of $C^\infty(\M,\Cn)$, in particular, 
it is itself dense. The following is a standard result in the theory of CR-singularities, see \cite{D}, but we provide a proof that is 
invoked in later computations. 

\begin{prop}\label{P:PertI} 
Given a smooth embedding $f: \M\hookrightarrow\Cn$, let $P=T^{\C\!} \M$ and 
$Q=\M \times \Cn$. The set of smooth embeddings $f$ such that $\Phi_f: \M\rightarrow\operatorname{Hom}_{\C}(P,Q)$ is 
transverse to $\operatorname{Hom}_\C^\nu(P,Q)$ for all $\nu=1,...,\lfloor{m/2}\rfloor$, is a residual set in 
$\calC^\infty(\M,\Cn)$. In particular, any smooth map $f: \M\hookrightarrow\Cn$, after a generic small perturbation, is an 
embedding such that each $\calS_\nu$ is either empty, or a (not necessarily closed) submanifold of real codimension 
$2\nu(n-m+\nu)$ in $\M$, and $\overline{\calS_\nu}=\cup_{j=\nu}^{\lfloor m/2\rfloor} \calS_j$.     
\end{prop}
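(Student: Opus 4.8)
\emph{Step 1 (the target stratification).} This is an instance of Thom's transversality machinery combined with the classical description of the rank strata of a space of linear maps, and I would proceed as follows. Fibrewise, the set of $\C$-linear maps $\C^m\to\Cn$ of rank exactly $m-\nu$ is a classical determinantal stratum: a smooth, locally closed submanifold of $\operatorname{Mat}_{n\times m}(\C)$ of complex codimension $\nu(n-m+\nu)$, whose closure is the union of all strata of rank $\le m-\nu$, and these strata form a Whitney stratification of that closure. Hence $\operatorname{Hom}_\C^\nu(P,Q)$ is a locally closed submanifold of the total space of $\operatorname{Hom}_\C(P,Q)$ of real codimension $2\nu(n-m+\nu)$, with $\overline{\operatorname{Hom}_\C^\nu(P,Q)}=\bigcup_{j\ge\nu}\operatorname{Hom}_\C^j(P,Q)$. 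Since $\ker\bigl((df_\C)(p)\bigr)=(\overline{\mathcal H_\C})_p$ has complex dimension $\dim_\C\mathcal H_p\le\lfloor m/2\rfloor$, the section $\Phi_f$ can only meet the strata with $\nu\le\lfloor m/2\rfloor$, so only finitely many $\nu$ are in play.

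\emph{Step 2 (factor $\Phi_f$ through a jet).} The map $A\mapsto A_\C$ is an $\R$-linear isomorphism $\operatorname{Mat}_{2n\times m}(\R)\to\operatorname{Mat}_{n\times m}(\C)$, so it induces a bundle isomorphism $\operatorname{Hom}_\R(T\M,\M\times\Cn)\xrightarrow{\ \sim\ }\operatorname{Hom}_\C(T^{\C}\M,\M\times\Cn)$. Precomposing with the natural submersion $J^1(\M,\Cn)\to\operatorname{Hom}_\R(T\M,\M\times\Cn)$ that forgets the $0$-jet, one gets a submersion $\rho\colon J^1(\M,\Cn)\to\operatorname{Hom}_\C(P,Q)$ with $\Phi_f=\rho\circ j^1 f$. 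As $\rho$ is a submersion, $W_\nu:=\rho^{-1}\bigl(\operatorname{Hom}_\C^\nu(P,Q)\bigr)$ is a submanifold of $J^1(\M,\Cn)$ of the same codimension, and $j^1 f\pitchfork W_\nu$ if and only if $\Phi_f\pitchfork\operatorname{Hom}_\C^\nu(P,Q)$.

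\emph{Step 3 (apply transversality).} By the Thom jet transversality theorem, for each fixed $\nu$ the set $T_\nu=\{f\in\calC^\infty(\M,\Cn):j^1 f\pitchfork W_\nu\}$ is residual in $\calC^\infty(\M,\Cn)$; the finite intersection over $\nu=1,\dots,\lfloor m/2\rfloor$ is still residual, and since $\M$ is compact the embeddings form an open (and, as $2n\ge 2m$, dense) subset containing $f$, so one stays within embeddings. This proves the first assertion. For the self-contained version needed in the later computations, one argues directly: perturb inside a finite-dimensional family $f+\sum_i a_i\chi_i\ell_i$, with $\ell_i$ affine in local charts and $\chi_i$ cutoffs, note that $(p,a)\mapsto\Phi_{f+\sum_i a_i\chi_i\ell_i}(p)$ is a submersion onto every fibre of $\operatorname{Hom}_\C(P,Q)$ because $dh_\C(p)$ realises every $\C$-linear map $T_p^{\C}\M\to\Cn$, and then invoke the parametric transversality theorem together with a Baire argument over a countable chart cover.

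\emph{Step 4 (read off $\calS_\nu$), and the main obstacle.} For $f$ in the residual set above, $\calS_\nu=\Phi_f^{-1}\bigl(\operatorname{Hom}_\C^\nu(P,Q)\bigr)$ is the transverse preimage of a submanifold of real codimension $2\nu(n-m+\nu)$, hence is either empty (forced whenever $2\nu(n-m+\nu)>m$) or a not-necessarily-closed submanifold of $\M$ of exactly that codimension. For the frontier relation, one inclusion is immediate from continuity: any limit point of $\calS_\nu$ maps into $\overline{\operatorname{Hom}_\C^\nu}$, so $\overline{\calS_\nu}\subseteq\bigcup_{j\ge\nu}\calS_j$. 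The reverse inclusion is the only genuinely delicate point: transversality to each stratum individually does not by itself force the higher strata $\calS_j$ (with $j>\nu$) to lie in the closure of $\calS_\nu$; one needs that $\Phi_f$ is simultaneously transverse to the whole Whitney stratification $\{\operatorname{Hom}_\C^j\}$ of $\overline{\operatorname{Hom}_\C^\nu}$ (equivalently, a local normal form for the determinantal strata), so that the stratification pulls back faithfully and its frontier condition yields $\calS_j\subseteq\overline{\calS_\nu}$ for all $j\ge\nu$. Everything else is a routine application of standard transversality theorems.
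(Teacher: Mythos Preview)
Your proposal is correct and follows a standard route, but it differs from the paper's argument in a way worth noting. You factor $\Phi_f$ through the $1$-jet and invoke Thom's jet transversality theorem, which is clean and conceptually transparent; your alternative parametric family $f+\sum a_i\chi_i\ell_i$ with cutoffs is also fine. The paper instead works with the global finite-dimensional family of \emph{ambient affine transformations} $\mathbf A(z)=Az+a$, $A\in\operatorname{GL}(2n,\R)$, $a\in\R^{2n}$, and verifies directly (by putting $df_\C(p)$ in a standard block form and computing the normal space to the rank stratum at that point) that the evaluation map $(\mathbf A,p)\mapsto(A\circ df)_\C(p)$ hits the normal directions. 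This buys them explicit coordinate formulas---the basis $\{E^{[rs]}\}$ for the normal space and the matching derivatives $\partial G/\partial c_{rs}$, $\partial G/\partial a_{rs}$---which are then reused verbatim in the proof of the later local-coordinate proposition on nondegeneracy. Your jet-transversality argument is shorter and more general, but would have to be supplemented by those computations when you reach that later proposition.

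Two minor remarks. First, your parenthetical ``as $2n\ge 2m$, dense'' is not quite right (Whitney density of embeddings needs target dimension strictly greater than $2m$), but this is harmless here since the statement already hands you an embedding and openness suffices. Second, your identification of the frontier condition $\calS_j\subseteq\overline{\calS_\nu}$ as the one delicate point is accurate, and your proposed resolution (transversality to the full Whitney stratification of the determinantal variety, so that the frontier relation pulls back) is the correct one; the paper's proof is in fact silent on this step and simply asserts the closure relation, so you have been more careful here than the paper.
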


\begin{proof} 
Let $f: \M\hookrightarrow \Cn$ be any embedding. Let $\Phi_f: \M\rightarrow\operatorname{Hom}_{\C\!}(P,Q)$ be the associated Gauss map as defined in \eqref{E:Gauss map}. We will use the parametric transversality theorem; see \cite[Theorem 2.7]{H}. Let $Y$ denote the set of all affine transformations ${\mathbf A}(z)=A(z)+a$, where $A\in \operatorname{Gl}(2n,\rl)$, $a\in\rl^{2n}$ and $z$ is viewed as an element in $\rl^{2n}$. Then 
$$
({\mathbf A},p)\mapsto \Phi_{{\mathbf A}\circ f}(p)=d({\mathbf A}\circ f)_\C(p)=(A\circ df)_\C(p)
$$ 
defines a smooth map $G:Y\times \M\rightarrow \operatorname{Hom}_{\C\,} (P,Q)$. We claim that $G$ is transverse to $\operatorname{Hom}_{\C}^\nu(P,Q)$ for all $\nu\leq\lfloor m/2\rfloor$. 

Let $G({\mathbf A}, p)\in \operatorname{Hom}_\C^\nu(P,Q)$ for some fixed $\nu\in\{1,...,\lfloor m/2\rfloor\}$. 
Relabelling ${\mathbf A}\circ f$ as $f$, it suffices to check that at $u=(I,p)$, where $I$ is the identity map on $\rl^{2n}$,
	\bes
		T_{G(u)}\operatorname{Hom}_\C^\nu(P,Q)+T_{G(u)}\left(G(Y\times \M)\right)=T_{G(u)}\operatorname{Hom}_{\C\!}\,(P,Q).
	\ees 
Let $N_{df_\C(p)}\operatorname{Hom}_\C^\nu(P_p,Q_p)$ denote the normal space of $\operatorname{Hom}_\C^\nu(P_p,Q_p)$ in $\operatorname{Hom}_{\C\!}(P_p,Q_p)$ at $df_\C(p)$. Then, 
\beas
		T_{G(u)}\operatorname{Hom}_{\C\!}\,(P,Q)&=&
			T_p \M\oplus T_{df_\C(p)}\operatorname{Hom}_\C^\nu(P_p,Q_p)\oplus N_{df_\C(p)}\operatorname{Hom}_\C^\nu(P_p,Q_p)\\
		&=&T_{G(u)}\operatorname{Hom}_\C^\nu(P,Q)\oplus N_{df_\C(p)}\operatorname{Hom}_\C^\nu(P_p,Q_p).
	\eeas
We will show that $N_{df_\C(p)}\operatorname{Hom}_\C^\nu(P_p,Q_p)\subset T_{G(u)}(G(Y\times\{p\}))=DG_u\left(T_{u}(Y\times \{p\})\right)$, which establishes the transversality claim. 

Since we work entirely within the fiber over $p$, we make the following identifications:
\begin{itemize}
\item $T_u(Y\times\{p\})\cong\text{Gl}(2n;\rl)$,
\item $\operatorname{Hom}_{\C\!}(P_p,Q_p)\cong \operatorname{Mat}_{n\times m}(\C)$, and
\item $\operatorname{Hom}_\C^\nu(P_p,W_p)\cong \operatorname{Mat}^\nu_{n\times m}(\C)$,
\end{itemize}
where $\operatorname{Mat}^\nu_{n\times m}(\C)$ denotes the space of $n\times m$ complex matrices of rank $m-\nu$. After a holomorphic change of coordinates, one may assume that 
	\bea\label{E:dfC}
		G(u)=df_\C(p)=	\left(\begin{array}{c | c}
		\begin{array}{c|c}
			\mathbf{I}_{\nu\times\nu} & i\mathbf{I}_{\nu\times\nu} \\
				\hline
			\mathbf{0}_{(m-2\nu)\times\nu} & \mathbf{0}_{(m-2\nu)\times\nu} 
		\end{array}
		& 
		\begin{array}{c}
			\mathbf{0}_{\nu\times(m-2\nu)} \\
				\hline
			\mathbf{I}_{(m-2\nu)\times(m-2\nu)}
		\end{array}\\
		\hline
		  		\mathbf{0}_{(n-m+\nu)\times 2\nu} & \mathbf{0}_{(n-m+\nu)\times(m-2\nu)}
 \end{array}\right),
	\eea
where $\mathbf{0}_{r\times s}$ and $\mathbf{I}_{r\times r}$ denote the $r\times s$ zero matrix and $r\times r$ identity matrix, respectively. Above, we have used the fact that $f$ is an embedding, i.e., $\operatorname{rank}_{\rl\!}df=m$, and $\operatorname{rank}_{\C\!}df_\C=m-\nu$. We use the coordinates $W=(w_{jk})_{j=1,...,n}^{k=1,...,m}$ for a matrix $W\in \operatorname{Mat}_{n\times m}(\C)$. For a fixed $W$, $r\in\{m-\nu+1,...,n\}$ and $s\in\{1,...,\nu\}$, let $W^{[rs]}$ denote the $(m-\nu+1)\times (m-\nu+1)$ matrix obtained from $W$ by   
	\begin{itemize}
	\item omitting all the lower $n-m+\nu$ rows, except for the $r$-th one, and
	\item omitting all the leftmost $\nu$ columns, except for the $s$-th one. 
	\end{itemize}
Then, locally near $G(u)=df_\C(p)$,
\beas 
	\operatorname{Mat}^\nu_{n\times m}(\C)=\left\{W\in\operatorname{Mat}_{n\times m}(\C)
		:  \phi_{rs}(W):=\det W^{[rs]}=0, r=m-\nu+1,...,n, s=1,...,\nu \right\},\\
		N_{G(u)}\operatorname{Mat}^\nu_{n\times m}(\C)=\text{span}_{\C\!}\left\{E^{[rs]}=\left(\overline{\partl{\phi_{rs}}{ w_{jk}}}(G(u))\right)_{j=1,...,n}^{k=1,...,m}:r=m-\nu+1,...,n, s=1,...,\nu\right\}.
\eeas
We compute the matrix $E^{[rs]}$ for $r=m-\nu+1$ and $s=1$. The computation goes along similar lines for all other values of $r$ and $s$. For a fixed $W=(w_{jk})$, let $W^{[(m-\nu+1)1]}=(\wt w_{jk})_{j,k=1,...,m-\nu+1}$. Then, 
for $j=1,...,m-\nu+1$, 
\beas
	\wt w_{jk}=\begin{cases}
		w_{j1},& \text{if }k=1,\\
		w_{j(k+\nu-1)},& \text{if }k=2,...,m-\nu+1.
	\end{cases}
\eeas
Furthermore, 
	\bes
		\phi_{(m-\nu+1)1}(W)=\det W^{[(m-\nu+1)1]}=\sum_{k=1}^{m-\nu+1}(-1)^{m-\nu+k+1}\wt w_{(m-\nu+1)k}\Delta_{(m-\nu+1)k},
	\ees
where $\Delta_{jk}$ is the $(j,k)$ minor of $W^{[(m-\nu+1)1]}$. Now, for $W=G(u)=df_\C(p)$, 
	\beas
		\wt w_{(m-\nu+1)k}&=&0, \, k=1,...,m-\nu+1,\\
				\Delta_{(m-\nu+1)k}&=&\begin{cases}
				i^\nu,& \text{if }k=1,\\
				i^{\nu-1},& \text{if }k=2,\\
				0,& \text{if }k=3,...,m-\nu+1.
				\end{cases}
	\eeas
Thus, 
	\bes
		\partl{\phi_{(m-\nu+1)1}}{ w_{jk}}(G(u))=\begin{cases}
		(-1)^{m-\nu}i^\nu,& \text{if }j=m-\nu+1, k=1,\\
		(-1)^{m-\nu+1}i^{\nu-1},& \text{if }j=m-\nu+1, k=2,\\
		0,& \text{for all other values of $j$ and $k$}.
	\end{cases}
	\ees
Repeating this computation for all $r\in\{m-\nu+1,...,n\}$ and $s\in\{1,...,\nu\}$, we obtain that 
	\bes 
E^{[rs]}=\left(\overline{\partl{\phi_{rs}}{ w_{jk}}}(G(u))\right)_{j=1,...,n}^{k=1,...,m}
		=\left(\begin{array}{c c c}		
		(-1)^{m}i^\nu\boldsymbol{\delta}_{rs} & (-1)^mi^{\nu-1}\boldsymbol{\delta}_{rs} 	
		& \mathbf{0}_{n\times(m-2\nu)}
 		\end{array}\right),
\ees
where $\boldsymbol{\delta}_{rs}$ is the $n\times \nu$ matrix whose $(r,s)$ entry is $1$ and all other entries are $0$. 

Next, we compute the space $DG_u(T_u(Y\times\{p\}))$. Given $A\in \text{Gl}(2n;\rl)$, we write 
	\bes
		A=\left(\begin{array}{c c c}
		\begin{array}{c c}
			a_{11} & b_{11}\\
			c_{11} & d_{11} 
		\end{array}
		& \cdots &
			\begin{array}{c c}
			a_{1n} & b_{1n}\\
			c_{1n} & d_{1n} 
		\end{array}\\
		\vdots & \cdots & \vdots \\
		\begin{array}{c c}
			a_{n1} & b_{n1}\\
			c_{n1} & d_{n1} 
		\end{array}
		& \cdots &
			\begin{array}{c c}
			a_{nn} & b_{nn}\\
			c_{nn} & d_{nn} 
		\end{array}\\
 \end{array}\right).
	\ees
Then, invoking \eqref{E:dfC}, we have that for $\mathbf A(z)=A(z)+a$,  $a\in\rl^{2n}$, 
\bes
d(\mathbf A\circ f)(p)=(A\circ df)(p)=\left(\begin{array}{c c c c c c| c   }
			a_{11} & \cdots & a_{1\nu} & b_{11} &\cdots & b_{1\nu}& \\
			c_{11} & \cdots & c_{1\nu} & d_{11} & \cdots & d_{1\nu} &  \\
			\vdots & \vdots & \vdots & \vdots
			& \vdots  & \vdots & \mathbf{0}_{2n\times (m-2\nu)}  \\
			a_{n1} & \cdots & a_{n\nu} & b_{n1} & \cdots & b_{n\nu} &  \\
			c_{n1} & \cdots & c_{n\nu} & d_{n1} & \cdots & d_{n\nu} & 
  \end{array}\right).\\
\ees
and
	\bes
		G({\mathbf A},p)=\left(A\circ df\right)_\C(p)=
\left(\begin{array}{c c c c c c| c   }
			a_{11}+i c_{11} & \cdots & a_{1\nu}+ic_{1\nu} 
			& b_{11}+id_{11} & \cdots & b_{1\nu}+id_{1\nu}& \\
			\vdots & \vdots & \vdots  
				&\vdots & \vdots & \vdots & \mathbf{0}_{n\times (m-2\nu)}  \\
			a_{n1}+ic_{n1} & \cdots & a_{n\nu}+ic_{n\nu} 
				& b_{n1}+id_{n1} & \cdots & b_{n\nu}+id_{n\nu} &  \\
  \end{array}\right).
	\ees
Now note that for $r\in\{n-\nu+1,...,n\}$ and $s\in\{1,...,\nu\}$, we have
	\bes
		(-1)^mi^{\nu-1}\left(\partl{G}{c_{rs}}(u)+\partl{G}{a_{rs}}(u)\right)=E^{[rs]},
	\ees
and thus $N_{G(u)}\operatorname{Hom}_\C^\nu(P_p,Q_q)\subset DG_u(T_uG(Y\times p))$, which establishes the transversality claim. The result now follows from the parametric transversality theorem, and the fact that 
$$
\text{codim}_{\C} (\operatorname{Mat}^\nu_{n\times m}(\C))=\nu(n-m+\nu).
$$
\end{proof}

\section{Some notions of degeneracy of order-one CR singularities}\label{S:nondeg}

In this section, we introduce some notions of (non)degeneracy for CR-singular points of order $1$, and describe a way to verify these notions in terms of local coordinates.

Assume that $\calS_1$ is a submanifold of $\M$. The following vector bundles over $\calS_1$ will play a role in the subsequent 
discussion:
\begin{itemize}
 \item $\mathcal H$, the real subbundle of $T\M$ over $\calS_1$, given by $\mathcal H_p=f^*(H_{f(p)}M)$,
\item  $N(\calS_1,\M)$, the normal bundle of $\calS_1$ in $\M$, given by $N_p(\calS_1, \M)=T_p\M/T_p \calS_1$, $p\in \calS_1$,
\item $K_\Phi$, the bundle over $\calS_1$ given by $K_p=\operatorname{Hom}_\C(\ker \Phi(p),{\coker}\Phi(p))$.
\end{itemize}
Note that $\mathcal H$ and $K_\Phi$ are complex bundles over $\calS_1$ of ranks $1$ and $n-m+1$, respectively. 
Finally, let $\pi: \mathcal H\rightarrow N(\calS_1,\M)$ denote the restriction of the projection map $T\M\rightarrow N(\calS_1,\M)$ to $\mathcal H$. 

\begin{defn}\label{D:Web} Let $\M$ be as above. Assume that $\calS_1$ is a submanifold of $\M$. We say that $p\in \calS_1$ is
\begin{itemize}
\item [($\mathcal W_0$)] a {\em totally degenerate} CR-singular point of $\M$ if $\dim_{\rl}\pi(\mathcal H_p)\cap N_{p}(\calS_1,\M)=0$; 
\item [($\mathcal W_1$)] a {\em parabolic} CR-singular point of $\M$ if $\dim_{\rl}\pi(\mathcal H_p)\cap N_{p}(\calS_1,\M)=1$;
\item [($\mathcal W_2$)] a {\em Webster nondegenerate} CR-singular point of $\M$ if $\dim_{\rl}\pi(\mathcal H_p)\cap N_{p}(\calS_1,\M)=2$.
\end{itemize}
We denote the set of totally degenerate, parabolic, and Webster nondegenerate points of $\M$ by 
$\mathcal W_0$, $\W_{1}$ and $\W_{2}$, respectively. 
\end{defn}

\begin{rem} Note that the submanifold $S_1=f(\calS_1)$ fails to be totally real precisely at the points in $f(\W_0)$. In the case of $m=n$, Webster nondegenerate points are referred to as elliptic or hyperbolic points in \cite{Web}, depending on whether $\pi$ is orientation-reversing or not. 
\end{rem}

A different notion of degeneracy of CR-singular points is discussed by Coffman in \cite{C}. However, this is done entirely in terms 
of local coordinates. We give a coordinate-free interpretation of Coffman nondegeneracy. We first make an observation about 
$N(\calS_1,\M)$ for a generic embedding~$f$. 

\begin{lemma}\label{L:N(S_1,M)} 
Suppose the section $\Phi_f: \M\rightarrow \operatorname{Hom}_\C(T^{\C\!}\M,\M\times\Cn)$ is transverse to $\operatorname{Hom}^1_\C(T^{\C\!} \M,\M\times\Cn)$. Then, $N(\calS_1, \M)\cong K_\Phi$ is a complex vector bundle over $\calS_1$ of rank $n-m+1$. 
\end{lemma}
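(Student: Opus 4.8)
The plan is to identify $N(\calS_1,\M)$ with $K_\Phi = \operatorname{Hom}_\C(\ker\Phi, \coker\Phi)$ via the standard linear-algebra description of the tangent space to a determinantal variety. Recall that $\calS_1 = \Phi_f^{-1}(\operatorname{Hom}^1_\C(P,Q))$ where $P = T^\C\M$, $Q = \M\times\Cn$, and by the transversality hypothesis and Proposition~\ref{P:PertI}, $\calS_1$ is a submanifold whose normal bundle satisfies
\[
N_p(\calS_1,\M) \cong N_{\Phi(p)}\operatorname{Hom}^1_\C(P_p,Q_p) := \operatorname{Hom}_\C(P_p,Q_p)\big/ T_{\Phi(p)}\operatorname{Hom}^1_\C(P_p,Q_p),
\]
the isomorphism being induced by $d\Phi_f(p)$ (this is exactly what transversality plus the codimension count gives). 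So the content of the lemma is the classical fact that for a linear map $L\colon P_p\to Q_p$ of corank $1$, the normal space to the corank-$1$ stratum at $L$ inside $\operatorname{Hom}_\C(P_p,Q_p)$ is naturally isomorphic to $\operatorname{Hom}_\C(\ker L, \coker L)$.

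The key steps I would carry out are as follows. First, fix $p\in\calS_1$ and write $L = \Phi(p)$. Since $L$ has corank $1$, $\ker L$ is $1$-dimensional and $\coker L = Q_p/\operatorname{im} L$ is $(n-m+1)$-dimensional over $\C$. Second, recall the smooth structure on the determinantal stratum: near $L$, choosing complementary subspaces $P_p = \ker L \oplus P'$ and $Q_p = \operatorname{im} L \oplus Q'$ (so that $L|_{P'}\colon P'\to\operatorname{im} L$ is an isomorphism), a nearby map $M$ written in block form $\begin{pmatrix} a & b\\ c & d\end{pmatrix}$ (with $a\colon \ker L\to\operatorname{im} L$, $d\colon P'\to Q'$, etc.) has corank $1$ iff its Schur-complement-type condition $a - b d'^{-1} c$ — or rather, since $L$ has the specific block form with the $P'\to\operatorname{im} L$ block invertible — the scalar obstruction vanishes; differentiating this single equation at $L$ shows that the tangent space $T_L\operatorname{Hom}^1_\C(P_p,Q_p)$ consists of those variations $\dot M$ whose "$(\ker L)\to Q'$" block vanishes, i.e. $T_L\operatorname{Hom}^1 = \{\dot M : \mathrm{pr}_{Q'}\circ \dot M|_{\ker L} = 0\}$. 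Third, conclude that the quotient map $\operatorname{Hom}_\C(P_p,Q_p)\to \operatorname{Hom}_\C(\ker L, Q')$, $\dot M\mapsto \mathrm{pr}_{Q'}\circ\dot M|_{\ker L}$, induces an isomorphism $N_L\operatorname{Hom}^1_\C(P_p,Q_p)\cong\operatorname{Hom}_\C(\ker L, Q')\cong\operatorname{Hom}_\C(\ker L,\coker L) = K_p$. Fourth, check that this identification is independent of the auxiliary choice of complements — the class of $\mathrm{pr}_{Q'}\circ\dot M|_{\ker L}$ in $\coker L$ does not depend on $Q'$ because any two choices differ by a map landing in $\operatorname{im} L$, and restriction to $\ker L$ kills the dependence on $P'$ — so the isomorphism is canonical and varies smoothly in $p$, giving a vector-bundle isomorphism $N(\calS_1,\M)\cong K_\Phi$. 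The rank count $n-m+1$ is then immediate.

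I would actually streamline the third and fourth steps by working invariantly from the start: for $\dot M\in\operatorname{Hom}_\C(P_p,Q_p)$, the composite $\ker L\hookrightarrow P_p \xrightarrow{\dot M} Q_p \twoheadrightarrow \coker L$ is manifestly well-defined (no choices), giving a canonical surjection $\Psi\colon \operatorname{Hom}_\C(P_p,Q_p)\to\operatorname{Hom}_\C(\ker L,\coker L)$; one then shows $\ker\Psi = T_L\operatorname{Hom}^1_\C(P_p,Q_p)$ by a dimension count (both have the same codimension $n-m+1$ in $\operatorname{Hom}_\C(P_p,Q_p)$, using $\dim_\C\operatorname{Hom}^1 = mn - (n-m+1)$ from Proposition~\ref{P:PertI}) together with the inclusion $T_L\operatorname{Hom}^1\subseteq\ker\Psi$, which follows by differentiating along curves $M(t)$ of corank-$1$ maps and noting $\ker M(t)$ must contribute to the kernel direction. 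Thus $\Psi$ descends to the claimed isomorphism $N_L\operatorname{Hom}^1\cong K_p$, and composing with $d\Phi_f(p)$-induced isomorphism $N_p(\calS_1,\M)\cong N_L\operatorname{Hom}^1$ finishes the proof; smoothness/local triviality in $p$ is routine since all constructions are algebraic in the entries.

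The main obstacle, such as it is, is the verification that $T_L\operatorname{Hom}^1_\C(P_p,Q_p) = \ker\Psi$ rather than merely $T_L\operatorname{Hom}^1\subseteq\ker\Psi$; this is where one genuinely uses that $L$ has corank exactly $1$ (for higher corank the normal space is a proper subspace of $\operatorname{Hom}_\C(\ker L,\coker L)$, not all of it). The cleanest route is the dimension count above, but one could alternatively exhibit, for each $\phi\in\operatorname{Hom}_\C(\ker L,\coker L)$, an explicit curve $M(t) = L + t\,\tilde\phi$ of corank-$1$ maps with $\Psi(\dot M(0)) = \phi$ — using the block normal form \eqref{E:dfC} with $\nu=1$ this amounts to the elementary observation that perturbing the single zero row/column entry appropriately keeps the rank at $m-1$. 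I would present the dimension-count argument as primary and remark on the explicit-curve construction as confirmation. Everything else is bookkeeping: the passage from the pointwise statement to the bundle statement uses only that $\calS_1$ is a submanifold (hypothesis), that $d\Phi_f$ induces the normal-bundle identification (transversality), and that $\ker\Phi$ and $\coker\Phi$ form smooth subbundle/quotient-bundle over $\calS_1$ (constant rank there).
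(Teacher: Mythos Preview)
Your approach is correct and follows the same two-step structure as the paper's proof: (i) transversality identifies $N(\calS_1,\M)$ with the pullback of the normal bundle of the corank-$1$ stratum, and (ii) that normal bundle is canonically $\operatorname{Hom}_\C(\ker,\coker)$. The paper simply cites these as standard facts (step (ii) from Golubitsky--Guillemin, Chapter~VI, \S1; step (i) as the usual normal-bundle pullback under transversality) rather than reproving them, so your write-up is more self-contained but not conceptually different.

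Two small corrections to your parenthetical remarks, neither of which affects the main argument. First, the claim that ``for higher corank the normal space is a proper subspace of $\operatorname{Hom}_\C(\ker L,\coker L)$'' is false: the identification $N_L\operatorname{Hom}^\nu_\C\cong\operatorname{Hom}_\C(\ker L,\coker L)$ holds for every $\nu$, since both sides have complex dimension $\nu(n-m+\nu)$; there is nothing special about $\nu=1$ here. Second, your ``alternative'' confirmation is garbled: a curve $M(t)$ of corank-$1$ maps has $\dot M(0)\in T_L\operatorname{Hom}^1\subseteq\ker\Psi$, so $\Psi(\dot M(0))=0$, not $\phi$. What you presumably intended is either (a) that surjectivity of $\Psi$ is immediate (any $\phi$ lifts to some $\tilde\phi\in\operatorname{Hom}_\C(P_p,Q_p)$), after which the dimension count finishes the job, or (b) that for every $\dot M\in\ker\Psi$ one can exhibit a corank-$1$ curve with that tangent, giving the reverse inclusion $\ker\Psi\subseteq T_L\operatorname{Hom}^1$ directly. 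Your primary dimension-count argument is clean and suffices on its own.
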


\begin{proof} 
We recall two facts about fibre bundles over manifolds. Given two complex vector bundles $P$ and $Q$ over $\M$ of rank 
$m\leq n$, it is known that $\operatorname{Hom}^\nu_\C(P,Q)$ is a fiber subbundle of $\operatorname{Hom}_\C(P,Q)$ 
of complex codimension $\nu(n-m+\nu)$. Moreover, the fiber of the normal bundle to $\operatorname{Hom}_\C^1(P,Q)$ in 
$\operatorname{Hom}(P,Q)$ at $A$ is isomorphic to $\operatorname{Hom}_\C(\ker A,{\coker} A)$. See (\cite{GG}, Chapter VI \S 1). 

Next, if $g:X\rightarrow Y$ is a smooth map between differentiable manifolds, and $g$ is transverse to some submanifold $Z\subset Y$ with $\codim Z\leq \dim X$, then the normal bundle of $W=g^{-1}(Z)$ in $X$, denoted by $N(W,X)$, is the pullback of the normal bundle $N(Z,Y)$ of $Z$ in $Y$. 

Applying these facts to $g=\Phi$, $Y=\operatorname{Hom}_\C(T^{\C\!}\M,\M\times\Cn)$, and $Z=\operatorname{Hom}_\C^1(T^{\C\!}\M,\M\times\Cn)$ yields the result. 
\end{proof}

\begin{defn}\label{D:Coff} Suppose $\Phi_f: \M\rightarrow \operatorname{Hom}_\C(T^{\C\!} \M,\M\times\Cn)$ is transverse to 
$\operatorname{Hom}_\C^1(T^{\C\!} \M,\M\times\Cn)$ and $\mathcal S_1=\Phi_f^{-1}(\operatorname{Hom}_\C^1(T^{\C\!} \M,\M\times\Cn))$ is nonempty. Let $\pi:\mathcal H\rightarrow N(\calS_1, \M)$ be the projection map as above. We say that $p\in \calS_1$ is a {\em Coffman nondegenerate} CR-singular point of $\M$ if and only if $\pi(\mathcal H_p)$ is a totally real $2$-plane in the complex vector space 
$N_p(\calS_1, \M)$. Alternatively, we can complexify $\mathcal H$ to extend 
$\pi:\mathcal H\rightarrow N(\calS_1, \M)$ to $\pi_{\C\!}:\mathcal H\otimes\C\rightarrow N(\calS_1, \M)$, and so $p$ is a Coffman 
nondegenerate CR-singular point of $\M$ if and only if $\dim_{\C\!}\pi_\C(\mathcal H_p\otimes\C)=2$. More generally, we define
\bes
		{\mathcal C}_j=\{p\in \calS_1:\dim_{\C\!}\pi_\C(\mathcal H_p\otimes\C)=j\},\quad j=0,1,2.
\ees
\end{defn}

\begin{rem} Note that $\W_0= \calC_0$ and $\W_1\subseteq \calC_1$. 
\end{rem}

We now describe these notions of (non)degeneracy in terms of local coordinates, which is useful for computations. Assume that $2n\leq 3m-2$.
Let $f: \M \to \C^n$ be a smooth embedding such that $\calS_1$ is a smooth submanifold of $\M$. 
Following \cite{C}, for a given $p\in \calS=\calS_1$, we may assume that $f(p)=0$, and in a neighbourhood of $0\in M=f(\M)$ 
we have the following pre-normal form:
\bea\label{E:PN}
	M=\left\{z\in\C^n:\quad 
			\begin{aligned}
					y_s&=H_s(z_1,\overline z_1,x_2,...,x_{m-1}),\quad 2\leq s\leq m-1,\\
					z_u&=h_u(z_1,\overline z_1,x_2,...,x_{m-1}),\quad  m\leq u\leq n
			\end{aligned}
					 \right\},
\eea
where $H_s$, $h_u$, and their derivatives (with respect to $x_1,y_1,x_2,...,x_{m-1}$) vanish at $0$. We refer the reader to \cite[\S~3]{C} for preliminary normalizations that get rid of 
	\begin{itemize}
	\item [(i)] quadratic terms of type $z_1^2,z_1x_t,x_tx_r$ (and their conjugates) in the expansion of $H_s$,
	\item [(ii)]  quadratic terms of type $z_1^2,z_1x_t,x_tx_r$ in the expansion of $h_u$,
	\end{itemize}
	without affecting the other coefficients in the quadratic terms. Furthermore, by applying transformations of the form
	\be\label{E:cub}
		z_\alpha\mapsto z_\alpha+Az_1^3+\sum_{s=2}^{m-1}B^sz_1^2z_s+
			\sum_{s,t=2}^{m-1}C^{s,t}z_1z_sz_t+\sum_{s,t,r=2}^{m-1}D^{s,t,r}z_sz_tz_r,
	\ee
we can ensure that there are no 
	\begin{itemize}
	\item [(i)] cubic terms of type $z_1^3, z_1^2x_s, z_1x_sx_t$, and $x_sx_tx_r$ (and conjugates) in the expansion of $H_s$,
	\item [(ii)] cubic terms of type $z_1^3, z_1^2x_s, z_1x_sx_t$, and $x_sx_tx_r$ in the expansion of $h_u$,
	\end{itemize}
Thus, setting $\mathbf x=(x_1,y_1,x_2,...,x_{m-1})$ with $z_1=x_1+iy_2$, we may assume that  
	\bea\label{E:PN1}
		H_s(\mathbf x)&=&\beta_s|z_1|^2+\mu_sz_1^2\overline z_1+\overline\mu_s\overline z_1^2 z_1+
\sum_{t=2}^{m-1}\lambda_s^tz_1\overline z_1x_t
+O(4),\quad  2\leq s\leq m-1,\\
		h_u(\mathbf x)&=&\beta_u|z_1|^2+\gamma_u\overline z_1^2 
\label{E:PN2}
			+\sum_{s=2}^{m-1} \eps^{s}_{u}\overline z_1x_s
+\kappa_uz_1^2\overline z_1
		+\theta_uz_1\overline z_1^2+\pi_u\overline z_1^3+\notag \\
		&&+\sum_{s=2}^{m-1}\left(\varphi_u^sz_1\overline z_1x_s+\psi_u^s\overline z_1^2x_s+\sum_{t=2}^{m-1}\sigma_u^{s,t}\overline z_1x_sx_t\right)+O(4),\quad  m\leq u\leq n,
	\eea
for some constants $\beta_s,\beta_u,\lambda_s^t\in\rl$ and $\gamma_s,\mu_s,\gamma_u, \eps_u^s,\kappa_u,\theta_u,\pi_u,\varphi^s_u,\psi^s_u,\sigma_u^{s,t}\in\C$. In particular, we may take $\M=\rl^m$ with coordinates $\mathbf x=(x_1,y_1,x)$, $p=0$, and $f=(f_1,...,f_n): \M\rightarrow\Cn$ to be locally given by
	\be\label{E:emb}
		\mathbf x\mapsto \Big(z_1,x_2+iH_2(z_1,\overline z_1,x),...,
				x_{m-1}+iH_{m-1}(z_1,\overline z_1,x),h_m(z_1,\overline z_1,x),...,h_n(z_1,\overline z_1,x)\Big).
	\ee
Now, observe that $\calS_1$ consists of those points in $\mathcal M=\rl^m$ at which the $n\times m$ matrix 
	\be\label{E:array} 
\renewcommand*{\arraystretch}{2}
		df_\C=
		\begin{pmatrix}
		1 & i & 0 & \cdots &0\\
		i\partl{H_2}{x_1} & i\partl{H_2}{y_1} & 1+i\partl{H_2}{x_2} &
	\cdots  & i\partl{H_2}{x_{m-1}}\\  
		\vdots & \vdots & \vdots & \ddots &\vdots\\
i\partl{H_{m-1}}{x_1} & i\partl{H_{m-1}}{y_1} & i\partl{H_{m-1}}{x_2} &
	\cdots  & 1+i\partl{H_{m-1}}{x_{m-1}}\\ 
	\partl{h_m}{x_1} & \partl{h_m}{y_1} & \partl{h_m}{x_2}&\cdots &\partl{h_m}{x_{m-1}}\\
		\vdots &\vdots &\vdots &\ddots &\vdots \\
	\partl{h_n}{x_1} & \partl{h_n}{y_1} & \partl{h_n}{x_2}&\cdots &\partl{h_n}{x_{m-1}}
	\end{pmatrix}	
	\ee
has (complex) rank $m-1$. On applying the row reductions
	\bes
		R_j\mapsto 
		\begin{cases}
			R_j-i\partl{H_j}{x_1}R_1,& 2\leq j\leq m-1,\\
			i\left(R_j-\partl{h_j}{x_1}R_1\right),& m\leq j\leq n,
		\end{cases}
	\ees
we see that it suffices to consider points in $\rl^m$ for which the following $(n-1)\times (m-1)$ matrix has rank $m-2$: 
\bes
\begin{tiny}
\left(\begin{array}{c|c c c}
 & & & \\
 2\partl{H_2}{\overline z_1} & & \\
\vdots &   & \mathbf I_{(m-2)}+\left(i\partl{H_j}{x_k}\right)_{j,k=2}^{m-1}&   \\
2\partl{H_{m-1}}{\overline z_1}  & & & \\
 & & & \\
\hline
 & & & \\
2\partl{h_m}{\overline z_1} & & & \\
 \vdots &   & \left(i\partl{h_u}{x_k}\right)_{\substack{m\leq u\leq n\\ 2\leq k\leq m-1}} &    \\
2\partl{h_n}{\overline z_1} & & & \\
 & & & \\
\end{array}\right)
=\left(\begin{array}{c|c c c}
 & & & \\
 2\partl{H_2}{\overline z_1} & & \\
\vdots &   & \mathbf I_{(m-2)}+O(2)
 &   \\
2\partl{H_{m-1}}{\overline z_1}  & & & \\
 & & & \\
\hline
 & & & \\
2\partl{h_m}{\overline z_1} & & & \\
 \vdots &   & \left(i\partl{h_u}{x_k}\right)_{\substack{m\leq u\leq n\\ 2\leq k\leq m-1}}  &    \\
2\partl{h_n}{\overline z_1} & & & \\
 & & & \\
\end{array}\right)
\end{tiny}
=
\left(\begin{array}{c |c}
\mathbf G & \mathbf H\\
\hline
\mathbf J & \mathbf K
\end{array}\right).
\ees
Since $\mathbf H$ is invertible near $\mathbf x=0$, the following row reductions
\bes
\left(\begin{array}{c c}
\mathbf I_{(m-2)} & \mathbf 0_{(m-2)\times(n-m+1)}\\
& \\
 -\mathbf K & \mathbf I_{(n-m+1)}
\end{array}\right)
\cdot 
\left(\begin{array}{c c}
\mathbf H^{-1} & \mathbf 0_{(m-2)\times(n-m+1)}\\
& \\
 \mathbf 0_{(n-m+1)\times(m-2)} & \mathbf I_{(n-m+1)}
\end{array}\right)\cdot 
\left(\begin{array}{c c}
\mathbf G & \mathbf H\\
\mathbf J & \mathbf K
\end{array}\right)
\ees
yields that, near $0$, 
\bes
	\text{rank}df_\C=m-1 
\iff \text{rank}\left(\begin{array}{c c}
\mathbf H^{-1}\mathbf G & \mathbf I\\
\mathbf J-\mathbf K\mathbf H^{-1}\mathbf G & \mathbf 0
\end{array}\right)=m-2
\iff \mathbf J-\mathbf K\mathbf H^{-1}\mathbf G \equiv\mathbf 0_{(n-m+1)\times 1}.
\ees
We have proved the following result.
\begin{lemma}\label{L:S1} Let $f:\mathcal M\rightarrow\Cn$ be as given by \eqref{E:PN}, \eqref{E:PN1}, \eqref{E:PN2} and \eqref{E:emb}. Then, there is a neighborhood $U$ of $\mathbf x=0$ such that 
	\be\label{E:CRS}
		\calS_1\cap U=\left\{\mathbf x\in U : B_u(\mathbf x)=0,\ \  m\leq u\leq n\right\},
	\ee
where $B_m,...,B_n$ are the entries of $\mathbf J-\mathbf K\mathbf H^{-1}\mathbf G$. In particular,  
	\bea
		B_u(\mathbf x)
		=\beta_uz_1+2\gamma_u\overline z_1+\sum_{s=2}^{m-1} \eps^{s}_{u} x_s-i\sum_{s=2}^{m-1}\eps^{s}_u\beta_s|z_1|^2+\kappa_uz_1^2+2\theta_u|z_1|^2+3\pi_u\overline z_1^2+\notag \\
		+\sum_{s=2}^{m-1}\left(\varphi_u^sz_1x_s+2\psi_u^s\overline z_1x_s+\sum_{t=2}^{m-1}\sigma_u^{s,t}x_sx_t\right)+
	O(3),\quad m\leq u\leq n. \label{E:CRSb}
	\eea
\end{lemma}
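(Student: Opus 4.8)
The plan is to turn the rank condition that defines $\calS_1$ into an explicit finite system of equations by performing invertible row operations on the Gauss matrix $df_\C$ displayed in \eqref{E:array}, and then to read off the low-order Taylor expansion of that system from the normal form \eqref{E:PN1}--\eqref{E:PN2}. By the definition of $\calS_1$ given in Section~\ref{S:PertI}, a point $\mathbf x\in\M=\rl^m$ lies in $\calS_1$ precisely when $\operatorname{rank}_\C df_\C(\mathbf x)=m-1$. For the embedding \eqref{E:emb} the matrix $df_\C(\mathbf x)$ is exactly \eqref{E:array}; its first row $(1,i,0,\dots,0)$ never vanishes, so on a neighbourhood $U$ of $0$ we may pivot on $R_1$. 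Carrying out the row operations $R_s\mapsto R_s-i(\partial H_s/\partial x_1)R_1$ for $2\le s\le m-1$ and $R_u\mapsto i\bigl(R_u-(\partial h_u/\partial x_1)R_1\bigr)$ for $m\le u\le n$, and then deleting the (now pivotal) first row and first column, reduces the question to whether the resulting $(n-1)\times(m-1)$ matrix has rank $m-2$ on $U$.

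First I would record the block structure $\left(\begin{smallmatrix}\mathbf{G}&\mathbf{H}\\\mathbf{J}&\mathbf{K}\end{smallmatrix}\right)$ of this reduced matrix: its leftmost column has entries $2\,\partial H_s/\partial\overline z_1$ ($2\le s\le m-1$) and $2\,\partial h_u/\partial\overline z_1$ ($m\le u\le n$), while its remaining $m-2$ columns carry the entries $i\,\partial H_s/\partial x_k$ and $i\,\partial h_u/\partial x_k$ ($2\le k\le m-1$). From \eqref{E:PN1} one reads off $\mathbf{H}=\mathbf{I}_{m-2}+\bigl(i\,\partial H_s/\partial x_k\bigr)_{s,k=2}^{m-1}=\mathbf{I}_{m-2}+O(2)$, so $\mathbf{H}$ is invertible after possibly shrinking $U$. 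Multiplying on the left by the invertible block matrices $\left(\begin{smallmatrix}\mathbf{I}&0\\-\mathbf{K}&\mathbf{I}\end{smallmatrix}\right)$ and $\left(\begin{smallmatrix}\mathbf{H}^{-1}&0\\0&\mathbf{I}\end{smallmatrix}\right)$ and clearing against the resulting identity block (equivalently, passing to the Schur complement) shows that the reduced matrix has rank $m-2$ at $\mathbf x\in U$ if and only if $\bigl(\mathbf{J}-\mathbf{K}\mathbf{H}^{-1}\mathbf{G}\bigr)(\mathbf x)=\mathbf{0}$, and otherwise has rank $m-1$; hence $df_\C$ has rank $m-1$ or $m$ throughout $U$. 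In particular $U$ meets no $\calS_\nu$ with $\nu\ge 2$, and writing $B_m,\dots,B_n$ for the $n-m+1$ entries of the column $\mathbf{J}-\mathbf{K}\mathbf{H}^{-1}\mathbf{G}$ gives \eqref{E:CRS}.

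It remains to expand each $B_u$ through order two to obtain \eqref{E:CRSb}; this is a direct but bookkeeping-heavy computation, and I expect no conceptual obstacle here, only the need to be careful with the correction term $\mathbf{K}\mathbf{H}^{-1}\mathbf{G}$. The point is that $\mathbf{H}^{-1}=\mathbf{I}_{m-2}+O(2)$, so $\mathbf{K}\mathbf{H}^{-1}\mathbf{G}=\mathbf{K}\mathbf{G}+O(3)$; and since \eqref{E:PN1}--\eqref{E:PN2} give $2\,\partial H_k/\partial\overline z_1=2\beta_k z_1+O(2)$ and $i\,\partial h_u/\partial x_k=i\eps^k_u\overline z_1+O(2)$ — note that no constant term occurs for $k\ge 2$, because $|z_1|^2$ and the powers of $\overline z_1$ do not involve $x_2,\dots,x_{m-1}$ — the $u$-th entry of $\mathbf{K}\mathbf{G}$ has leading (quadratic) term $2i\sum_{s=2}^{m-1}\eps^s_u\beta_s|z_1|^2$. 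Differentiating \eqref{E:PN2} term by term gives the linear and quadratic parts of $\mathbf{J}_u=2\,\partial h_u/\partial\overline z_1$; subtracting the above contribution and dividing out the common factor $2$ then produces exactly \eqref{E:CRSb}, where the cubic normalization \eqref{E:cub} is precisely what ensures that no additional cubic (or lower) monomials survive in the expansions \eqref{E:PN1}--\eqref{E:PN2} of $H_s$ and $h_u$.
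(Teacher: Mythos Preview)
Your argument is correct and follows essentially the same route as the paper: the same row reductions on \eqref{E:array}, the same block decomposition $\left(\begin{smallmatrix}\mathbf G&\mathbf H\\\mathbf J&\mathbf K\end{smallmatrix}\right)$, and the same Schur-complement step to reduce the rank condition to $\mathbf J-\mathbf K\mathbf H^{-1}\mathbf G=0$. You are in fact slightly more explicit than the paper on the Taylor computation, correctly observing that $\mathbf H^{-1}=\mathbf I+O(2)$ forces $\mathbf K\mathbf H^{-1}\mathbf G=\mathbf K\mathbf G+O(3)$ and that the displayed expansion \eqref{E:CRSb} is the entry of $\mathbf J-\mathbf K\mathbf H^{-1}\mathbf G$ after clearing the overall factor of~$2$.
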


We also describe a vector field (up to O(1) terms) that spans $\mathcal H_\C$ over $\mathcal S_1$.

\begin{lemma}\label{L:spanH} Let $f:\mathcal M\rightarrow\Cn$ be as given by \eqref{E:PN}, \eqref{E:PN1}, \eqref{E:PN2} and \eqref{E:emb}. There is a section $\mathcal V$ of $\mathcal H_\C|_{\mathcal S_1}$ of the form  
\beas
	\mathcal V=\partl{}{z_1}+O(1)
\eeas
so that $(\mathcal H_\C)_{\mathbf x}=\text{span}_{\C\!}\left\{\mathcal V(\mathbf x)\right\}$ for $\mathbf x\in \calS_1$.
\end{lemma}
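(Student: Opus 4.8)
The plan is to produce an explicit section $\mathcal V$ of $\mathcal H_\C|_{\calS_1}$ by working with the pre-normal form \eqref{E:emb} and finding the kernel of $\Phi_f = df_\C$ over $\calS_1$. Recall that for $\mathbf x \in \calS_1$, the complex rank of the $n \times m$ matrix $df_\C$ in \eqref{E:array} drops to $m-1$, so $\ker(df_\C)(\mathbf x)$ is one-dimensional over $\C$, and by construction this kernel is exactly $(\mathcal H_\C)_{\mathbf x}$ (as noted in Section~\ref{S:PertI}, $\ker(df_\C)(p) = (\overline{\mathcal H_\C})_p$, up to the relevant identification of the eigenspaces). So the task reduces to: solve $df_\C(\mathbf x)\cdot v = 0$ for $v \in \C^m$, for $\mathbf x \in \calS_1$, and exhibit a solution of the form $v = e_1 + O(1)$, where $e_1$ is the first standard basis vector (corresponding to $\partial/\partial z_1$ after the identification $T^\C\M \cong \M \times \C^m$ given by the coordinates $(z_1, x_2, \dots, x_{m-1})$).

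First I would write down $df_\C$ explicitly from \eqref{E:array}: its first row is $(1, i, 0, \dots, 0)$, and since all the $H_s$ and $h_u$ have vanishing first derivatives at $0$, the lower-left $(n-1)\times 2$ block consists of $O(1)$ entries, while the lower-right $(n-1)\times(m-2)$ block is $\mathbf I_{m-2} \oplus \mathbf 0$ plus $O(1)$ corrections — exactly the block structure already exploited in the derivation of Lemma~\ref{L:S1}. Writing $v = (v_1, v_{y_1}, v_2, \dots, v_{m-1})$ in the complexified coordinates, the first equation reads $v_1 + i v_{y_1} = 0$, i.e. $v_{y_1} = i v_1$ (this is just the statement that $v$ lies in the $+i$-eigenspace direction, i.e. $v$ is proportional to $\partial/\partial z_1$ at $0$). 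The remaining $n-1$ equations, for $2 \le j \le m-1$ and $m \le u \le n$, then determine $v_2, \dots, v_{m-1}$ in terms of $v_1$: normalizing $v_1 = 1$, the rows $2, \dots, m-1$ give $v_k = -i\,\partial H_k/\partial x_1 - \partial H_k/\partial y_1 \cdot (\text{stuff}) + O(2) = O(1)$ for $k = 2, \dots, m-1$ after solving the (near-identity, hence invertible) linear system coming from the lower-right block, and the rows $m, \dots, n$ are then automatically satisfied precisely because $\mathbf x \in \calS_1$ (this is the content of the rank drop: the last $n-m+1$ rows are linearly dependent on the first $m-1$ on $\calS_1$). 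Hence $\mathcal V(\mathbf x) = \partial/\partial z_1 + \sum_{k=2}^{m-1} v_k(\mathbf x)\,\partial/\partial x_k$ with $v_k = O(1)$, which is the claimed form; smoothness of $\mathcal V$ in $\mathbf x$ follows since $v_k$ is obtained by Cramer's rule from smooth entries with a nonvanishing (near $0$) determinant.

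The main obstacle, such as it is, is purely bookkeeping: one must be careful about which complexified tangent-vector coordinate system is being used ($(z_1, \overline z_1, x_2, \dots)$ versus $(z_1, x_2, \dots)$), so that ``$\mathcal H_\C$'' really is the $+i$-eigenspace and the formula ``$\partial/\partial z_1 + O(1)$'' is interpreted correctly — i.e. $\mathcal V$ is a section of $T^\C\M$ whose leading term is the holomorphic Wirtinger vector field $\partial/\partial z_1$, annihilated by $df_\C$ modulo the dependency relations defining $\calS_1$. One also must check that the linear system solved in the middle step is genuinely invertible on a neighbourhood $U$ of $0$; but this is immediate since the relevant submatrix is $\mathbf I_{m-2} + O(1)$ and all the $O(1)$ terms vanish at $0$, exactly as in the row reductions preceding Lemma~\ref{L:S1}. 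No deep idea is needed beyond the linear algebra already set up there.
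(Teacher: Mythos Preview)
Your approach is essentially the same as the paper's: both solve for the one-dimensional kernel of $df_\C$ over $\calS_1$ by linear algebra on the matrix \eqref{E:array}, using that the lower-right block is $\mathbf I_{m-2}+O(1)$. The paper phrases this as solving $(df_\C)\overline{\mathcal V}\equiv 0$ on $\calS_1$, which is the cleaner way to state what you are doing.

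There is one slip in your bookkeeping, precisely at the point you flag. From $v_1+iv_{y_1}=0$ you correctly obtain $v_{y_1}=iv_1$, so (normalizing $v_1=1$) the kernel vector is $(1,i,0,\dots)$ in the $(x_1,y_1,\dots)$ basis, i.e.\ $\partial_{x_1}+i\partial_{y_1}=2\,\partial/\partial\overline{z_1}$, not $\partial/\partial z_1$. This is consistent with $\ker(df_\C)=\overline{\mathcal H_\C}$: what you have found is $\overline{\mathcal V}=\partial/\partial\overline{z_1}+O(1)$, and conjugating gives the claimed $\mathcal V=\partial/\partial z_1+O(1)$ spanning $\mathcal H_\C$. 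So the argument is correct once this identification is made; your parenthetical ``$v$ is proportional to $\partial/\partial z_1$ at $0$'' should read $\partial/\partial\overline{z_1}$.
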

\begin{proof} Since $\ker (df_\C)=\overline{\mathcal H_\C}$ for any any embedding $f:\mathcal M\hookrightarrow\Cn$, we solve for a vector field $\mathcal V=a_1\partl{}{x_1}+b_1\partl{}{y_1}+\sum\limits_{s=2}^{m-1}a_s\partl{}{x_s}$, for smooth $\C$-valued $a_1,b_1,a_s$, so that  
	\bes
		(df_\C)\overline{\mathcal V}\equiv 0\quad \text{on }\mathcal S_1,
	\ees
for $df_\C$ as computed in \eqref{E:array}. 
\end{proof}

Finally, we are in the position to describe the various nondegeneracies introduced above, in terms of the coefficients appearing in \eqref{E:PN1} and \eqref{E:PN2}. 

\begin{prop}\label{P:LocCoord} Let $f:\mathcal M\rightarrow\Cn$ be as given by \eqref{E:PN}, \eqref{E:PN1}, \eqref{E:PN2} and \eqref{E:emb}. Let $B_m,...,B_n$ be as in Lemma~\ref{L:S1}, and set
	\be\label{E:defS1}
		\mathscr S=\left(\Re B_m,\Im B_m,...,\Re B_n,\Im B_n\right),
	\ee 
so that $\mathcal S_1\cap U=\{\mathbf x\in U:\mathscr S(\mathbf x)=0\}$ for some neighborhood $U$ of $0$. 
\begin{enumerate}
\item [(a)] The map $\Phi_f: \M\rightarrow\operatorname{Hom}_{\C}(P,Q)$ is transverse to $\operatorname{Hom}_\C^1(P,Q)$ at $p=0$ if and only if \be\label{E:nondeg}
\left(dB_m\wedge d\overline B_m\wedge\cdots\wedge dB_n\wedge d\overline B_n\right)(0)\neq 0, 
\ee
or, equivalently, the matrix 
\be\label{E:nondeg2}
d\mathscr S(0)=\begin{pmatrix}
 \beta_m+2 \Re \gamma_m & 2\Im \gamma_m & \Re \eps_m^2 & \cdots  & \Re \eps_m^{m-1}
\\
2\Im \gamma_m & \beta_m - 2\Re\gamma_m & \Im \eps_{m}^2 &  \cdots &  \Im \eps_{m}^{m-1} 
\\
\vdots & \vdots &\vdots &\ddots & \vdots
\\ 
\beta_n+2\Re \gamma_n &  2\Im \gamma_n & \Re \eps_n^2 & \cdots  & \Re \eps_n^{m-1} 
\\
2\Im \gamma_n & \beta_n - 2\Re\gamma_n & \Im \eps_n^2 & \cdots  & \Im \eps_n^{m-1}
\end{pmatrix}.
\ee
has real rank $2(n-m+1)$. 
\item [(b)] Assume that \eqref{E:nondeg} holds. The point $p=0$ is in $W_j$, $j=0,1,2,$ if and only if \
	\be\label{E:Web}
		\operatorname{rank}_{\rl\!}\begin{pmatrix}	
		2\Im\gamma_m & \beta_m-2\Re\gamma_m\\
		\beta_m+2\Re\gamma_m & 2\Im\gamma_m\\
		\vdots &\vdots\\
		2\Im\gamma_n & \beta_n-2\Re\gamma_n\\
		\beta_n+2\Re\gamma_n & 2\Im\gamma_n
		\end{pmatrix}=j. 
	\ee
\item [(c)] Assume that \eqref{E:nondeg} holds. The point $p=0$ is in $C_j$, $j=0,1,2,$ if and only if 
	\be\label{E:Coff}
		\operatorname{rank}_{\C\!}\begin{pmatrix}	
		\beta_m & \gamma_m\\
		\vdots &\vdots\\
		\beta_n & \gamma_n
		\end{pmatrix}=j. 
	\ee 
\end{enumerate}
\end{prop}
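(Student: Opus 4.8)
The plan is to read off all three characterizations from a single computation of the differential $d\mathscr{S}(0)$ and the spanning vector field $\mathcal{V}$. First I would use the explicit expansion \eqref{E:CRSb} of $B_u$ from Lemma~\ref{L:S1}. Since $B_u(\mathbf x) = \beta_u z_1 + 2\gamma_u \overline z_1 + \sum_{s=2}^{m-1}\eps_u^s x_s + O(2)$, writing $z_1 = x_1 + iy_1$ we get the linear part $\beta_u(x_1 + iy_1) + 2\gamma_u(x_1 - iy_1) + \sum_s \eps_u^s x_s$, so $\partial B_u/\partial x_1 = \beta_u + 2\gamma_u$, $\partial B_u/\partial y_1 = i(\beta_u - 2\gamma_u)$, and $\partial B_u/\partial x_s = \eps_u^s$ at $0$. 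Splitting into real and imaginary parts gives exactly the matrix \eqref{E:nondeg2}. For part (a), the transversality of $\Phi_f$ to $\operatorname{Hom}^1_\C(P,Q)$ at $0$ means $\mathscr{S}$ is a submersion at $0$ onto $\rl^{2(n-m+1)}$ (since by Lemma~\ref{L:S1} the functions $B_u$ are local defining functions for $\mathcal S_1$, and $\mathcal S_1 = \Phi_f^{-1}(\operatorname{Hom}^1_\C)$ with $\operatorname{Hom}^1_\C$ of real codimension $2(n-m+1)$); this is precisely that $d\mathscr{S}(0)$ has full real rank $2(n-m+1)$, equivalently $dB_m \wedge d\overline B_m \wedge \cdots \wedge dB_n \wedge d\overline B_n \neq 0$ at $0$ (the complex differentials $dB_u, d\overline B_u$ are $\rl$-linearly independent iff the real and imaginary parts are).

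For part (b), I would compute $\pi(\mathcal H_0)$ and its intersection with $N_0(\mathcal S_1,\M)$. By Lemma~\ref{L:spanH}, $\mathcal H_0$ is spanned over $\rl$ by $\Re \mathcal V(0)$ and $\Im \mathcal V(0)$ where $\mathcal V = \partial/\partial z_1 + O(1)$; at $\mathbf x = 0$ this is just $\mathcal H_0 = \operatorname{span}_\rl\{\partial/\partial x_1, \partial/\partial y_1\}$ inside $T_0\M = \rl^m$. The projection $\pi: \mathcal H \to N(\mathcal S_1,\M) = T\M/T\mathcal S_1$ sends a vector $v$ to its class mod $T_0\mathcal S_1 = \ker d\mathscr{S}(0)$. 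Using the identification $N_0(\mathcal S_1,\M) \cong \rl^{2(n-m+1)}$ via $d\mathscr S(0)$ (valid since $\mathscr S$ is a submersion), the images $\pi(\partial/\partial x_1)$ and $\pi(\partial/\partial y_1)$ are the first two columns of $d\mathscr{S}(0)$, namely $(\beta_m + 2\Re\gamma_m, 2\Im\gamma_m, \ldots)^T$ and $(2\Im\gamma_m, \beta_m - 2\Re\gamma_m, \ldots)^T$ — but the complex structure $J$ on $N_0(\mathcal S_1,\M) \cong K_\Phi$ must be accounted for. The key point is that $\dim_\rl \pi(\mathcal H_0) \cap N_0(\mathcal S_1,\M)$, interpreted in the Webster sense (via Lemma~\ref{L:N(S_1,M)}, $N(\mathcal S_1,\M)$ carries a complex structure), equals the real rank of the $2(n-m+1) \times 2$ matrix whose columns are these two projected vectors; reordering rows to match the display gives \eqref{E:Web}. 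Here $\dim_\rl \pi(\mathcal H_p) \cap N_p$ is to be read as the dimension of $\pi(\mathcal H_p)$ as a real subspace of $N_p$ — since $\pi(\mathcal H_p) \subseteq N_p$ always, this intersection is just $\pi(\mathcal H_p)$, and its dimension is the rank of that matrix.

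For part (c), the Coffman condition is $\dim_\C \pi_\C(\mathcal H_0 \otimes \C) = j$ where $\pi_\C: \mathcal H \otimes \C \to N(\mathcal S_1,\M)$ uses the complex structure on $N(\mathcal S_1,\M) \cong K_\Phi$. Now $\mathcal H_0 \otimes \C$ is spanned over $\C$ by $v_0 \otimes 1 - \mathcal J(v_0)\otimes i$ and its conjugate, equivalently by $\partial/\partial z_1$ and $\partial/\partial \overline z_1$; under $\pi_\C$, thinking of $N_0(\mathcal S_1,\M)$ as a complex $(n-m+1)$-space, $\pi_\C(\partial/\partial z_1)$ should correspond to the vector $(\beta_m, \ldots, \beta_n)$ and $\pi_\C(\partial/\partial\overline z_1)$ to $(\gamma_m, \ldots, \gamma_n)$ (up to a scalar), after identifying $N_0 \cong \C^{n-m+1}$ through the $\C$-bundle isomorphism $N(\mathcal S_1,\M) \cong K_\Phi$ and using that $B_u$'s linear-in-$z_1, \overline z_1$ part has coefficients $\beta_u$ and $2\gamma_u$. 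Hence $\dim_\C$ of the image is the complex rank of the matrix in \eqref{E:Coff}. The main obstacle I anticipate is pinning down precisely how the abstract complex structure on $N(\mathcal S_1,\M)$ — obtained via the pullback isomorphism with $K_\Phi = \operatorname{Hom}_\C(\ker\Phi, \operatorname{coker}\Phi)$ in Lemma~\ref{L:N(S_1,M)} — translates, under the coordinate identification $N_0 \cong \rl^{2(n-m+1)}$ via $d\mathscr S(0)$, into an explicit $\rl$-linear complex structure; getting this matching right is what makes the $\gamma_u$ appear with the specific pairing $(\beta + 2\Re\gamma, 2\Im\gamma)$ versus $(2\Im\gamma, \beta - 2\Re\gamma)$ in (b) and with coefficient $2$ in the holomorphic slot but bare $\beta$ in the antiholomorphic slot in (c). I would handle this by computing $\operatorname{coker}\Phi(0)$ and $\ker\Phi(0)$ explicitly from \eqref{E:array}: $\ker\Phi(0) = \overline{\mathcal H_\C}|_0 = \operatorname{span}_\C\{\partial/\partial\overline z_1\}$ and $\operatorname{coker}\Phi(0) \cong \C^{n-m+1}$ (the last $n-m+1$ coordinate directions), so that the isomorphism $N_0(\mathcal S_1,\M) \to \operatorname{Hom}_\C(\ker\Phi(0), \operatorname{coker}\Phi(0)) \cong \C^{n-m+1}$ is given by differentiating the map $\Phi$ in normal directions — which, via \eqref{E:CRSb}, produces exactly the coefficient data $(\beta_u, \gamma_u, \eps_u^s)$, making all three rank conditions fall out simultaneously.
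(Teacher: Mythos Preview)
Your strategy for (b) and (c) is essentially the paper's: represent $\pi$ via the action of $\Re\mathcal V,\Im\mathcal V$ (equivalently $\partial/\partial x_1,\partial/\partial y_1$ at $0$) on the defining functions $\Re B_u,\Im B_u$, and for (c) combine pairs of real rows into single complex rows $\Im B_u-i\Re B_u=-iB_u$, then column-reduce to $(\mathcal V B_u,\overline{\mathcal V}B_u)$, giving \eqref{E:Web} and \eqref{E:Coff}. The paper does exactly this, so your concern about matching the abstract complex structure on $N(\calS_1,\M)\cong K_\Phi$ with the coordinate picture is handled simply by observing that, in the basis $\{\nabla\Im B_u,-\nabla\Re B_u\}$, the $J$ on $N$ pairs these two rows, and no deeper computation of the $K_\Phi$-isomorphism is actually needed.

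For (a), however, your justification has a gap. You argue: ``$B_u$ are local defining functions for $\calS_1$, $\calS_1=\Phi_f^{-1}(\operatorname{Hom}^1_\C)$, and $\operatorname{Hom}^1_\C$ has real codimension $2(n-m+1)$, hence transversality $\iff$ $d\mathscr S(0)$ has full rank.'' But knowing only that $\{B_u=0\}=\calS_1$ as \emph{sets} does not give this equivalence (think of $\{x^2=0\}\subset\rl$). What you need, and what is implicit in the row-reduction proof of Lemma~\ref{L:S1}, is that the $B_u$ are pullbacks $B_u=\phi_u\circ\Phi_f$ of smooth functions $\phi_u$ defined on a neighbourhood of $df_\C(0)$ in $\operatorname{Mat}_{n\times m}(\C)$ that cut out $\operatorname{Hom}^1_\C$ transversally there (the same row-reduction recipe applied to a generic matrix $W$ near $df_\C(0)$ produces such $\phi_u$). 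Once that is said, the chain rule gives transversality $\iff$ the $dB_u,d\overline B_u$ are independent. The paper instead bypasses this general principle and checks transversality directly: it writes down an explicit basis $\sigma_r$ of the normal space $N_{df_\C(0)}\operatorname{Mat}^1_{n\times m}(\C)$ and computes the pairing matrix $\Lambda=(\langle\partial\Phi/\partial x_j,\overline\sigma_r\rangle)$ from the entries $\Phi_{u1},\Phi_{u2}$ of $df_\C$, showing $\operatorname{rank}\Lambda=2(n-m+1)$ is precisely \eqref{E:nondeg2}. Your route is shorter once the pullback observation is made explicit; the paper's is longer but self-contained.
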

\begin{proof} First, we prove (a). Since our computations are local, we identify $\operatorname{Hom}_{\C\!}(T^{\C\!} \M,\M\times\C)$ 
with $\M\times \operatorname{Mat}_{n\times m}(\C)$ and $\operatorname{Hom}^1_\C(T^{\C\!} \M,\M\times\Cn)$ with 
$\M\times \operatorname{Mat}^1_{n\times m}(\C)$, where $\operatorname{Mat}_{n\times m}(\C)$ denotes the space of 
$n\times m$ complex matrices, and $\operatorname{Mat}^1_{n\times m}(\C)$ denotes the space of $n\times m$ complex 
matrices of rank $m-1$. The transversality of $\Phi=\Phi_f$ to $\operatorname{Hom}_\C^1(P,Q)$ at $p$ is equivalent to 
	\bes
		(d\Phi)_p(T_p\M)+T_{df_\C(p)}\operatorname{Mat}^1_{n\times m}(\C)=T_{df_\C(p)}\operatorname{Mat}_{n\times m}(\C),
	\ees
as real subspaces, or that the projection of $(d\Phi)_p(T_p \M)$ onto $N_{df_\C(p)}\operatorname{Mat}^1_{n\times m}(\C)$ is surjective. 

As noted in the proof of Proposition~\ref{P:PertI}.a, 
\bes
df_\C(0) =
\left(\begin{array}{c | c}
		\begin{array}{c|c}
			1 & i\\
				\hline
			\mathbf{0}_{(m-2)\times 1} & \mathbf{0}_{(m-2)\times 1} 
		\end{array}
		& 
		\begin{array}{c}
			\mathbf{0}_{1\times(m-2)} \\
				\hline
			\mathbf{I}_{(m-2)\times (m-2)}
		\end{array}\\
		\hline
		  		\mathbf{0}_{(n-m+1)\times 2} & \mathbf{0}_{(n-m+1)\times(m-2)}
 \end{array}\right) ,
\ees
and 
\be\label{E:sigma}
N_{df_\C(0)}\operatorname{Mat}^1_{n\times m}(\C)=\text{span}_{\C\!}\left\{\sigma_r=\left(\begin{array}{c | c}		
			\mathbf{0}_{(m-1)\times 2} & \mathbf{0}_{(m-1)\times (m-2)}\\
		\hline
		  	\begin{array}{c|c}
		i\boldsymbol{\delta}_{r} & \boldsymbol{\delta}_{r} 
		\end{array} & \mathbf{0}_{(n-m+1)\times(m-2)}
 \end{array}\right):r=1,...,n-m+1\right\},
\ee
where $\boldsymbol{\delta}_r$ is the $(n-m+1)$ column matrix whose $r$-th entry is $1$, and the rest are $0$. 

Next, note that
	\be\label{E:push}
	(d\Phi)_0(T_0 \M)=\text{span}_\rl
\left\{\frac{\partial \Phi}{\partial x_1}(0), \frac{\partial \Phi}{\partial y_1}(0), \frac{\partial \Phi}{\partial x_2}(0),\dots, 
\frac{\partial \Phi}{\partial x_{m-1}}(0)\right\}.
\ee
The surjectivity of the projection of $(d\Phi)_0(T_0 \M)$ onto $N_{df_\C(0)}\operatorname{Mat}^1_{n\times m}(\C)$ is equivalent to the matrix
	\bes
		\Lambda=\begin{pmatrix}[1.5]
\Re\Big\langle \frac{\partial \Phi}{\partial x_1}(0),\overline\sigma_1\Big\rangle &
\Im\Big\langle \frac{\partial \Phi}{\partial x_1}(0),\overline\sigma_1\Big\rangle & 
\dots & 
\Re\Big\langle \frac{\partial \Phi}{\partial x_1}(0),\overline\sigma_{\ell}\Big\rangle &\Im\Big\langle \frac{\partial \Phi}{\partial x_1}(0),\overline\sigma_{\ell}\Big\rangle
  \\
\Re\Big\langle \frac{\partial \Phi}{\partial y_1}(0),\overline\sigma_1\Big\rangle &
\Im\Big\langle \frac{\partial \Phi}{\partial y_1}(0),\overline\sigma_1\Big\rangle & 
\dots & 
\Re\Big\langle \frac{\partial \Phi}{\partial y_1}(0),\overline\sigma_{\ell}\Big\rangle &\Im\Big\langle \frac{\partial \Phi}{\partial y_1}(0),\overline\sigma_{\ell}\Big\rangle
  \\
\Re\Big\langle \frac{\partial \Phi}{\partial x_2}(0),\overline\sigma_1\Big\rangle &
\Im\Big\langle \frac{\partial \Phi}{\partial x_2}(0),\overline\sigma_1\Big\rangle & 
\dots & 
\Re\Big\langle \frac{\partial \Phi}{\partial x_2}(0),\overline\sigma_{\ell}\Big\rangle &\Im\Big\langle \frac{\partial \Phi}{\partial x_2}(0),\overline\sigma_{\ell}\Big\rangle
  \\
\vdots &\vdots & \cdots & \vdots &\vdots\\
\Re\Big\langle \frac{\partial \Phi}{\partial x_{m-1}}(0),\overline\sigma_1\Big\rangle &
\Im\Big\langle \frac{\partial \Phi}{\partial x_{m-1}}(0),\overline\sigma_1\Big\rangle & 
\dots & 
\Re\Big\langle \frac{\partial \Phi}{\partial x_{m-1}}(0),\overline\sigma_{\ell}\Big\rangle &\Im\Big\langle 
\frac{\partial \Phi}{\partial x_{m-1}}(0),\overline\sigma_{\ell}\Big\rangle
  \\
\end{pmatrix}
	\ees
having (real) rank $2\ell$, where $\ell=n-m+1$. Here $\Big\langle \cdot,\cdot\Big\rangle$ is the standard complex bilinear pairing on $\operatorname{Mat}_{n\times m}(\C)$. From the description of $\sigma_r$ in \eqref{E:sigma}, we only need to compute the $(u,1)$
and $(u,2)$ entries of the matrices in \eqref{E:push}, where $u=m,\dots,n$. A direct computation using~\eqref{E:PN},~\eqref{E:PN1} and~\eqref{E:PN2} yields that, if $\Phi(\mathbf x)=df_\C(\mathbf x)=\Big(\Phi_{jk}(\mathbf x)\Big)_{j=1,...,n}^{k=1,...,m}$, then 
\bea
\Phi_{u1}(\mathbf x)& =& (2\beta_u +2 \Re \gamma_u)x_1 + 2 \Im \gamma_u y_1 + \sum_{s=2}^{m-1} (\Re \eps_u^s) x_s + \notag\\
&& \qquad i\left(2\Im \gamma_u x_1 - 2\Re \gamma_u y_1 + \sum_{s=2}^{m-1} (\Im \eps_u^s) x_s \right) + O(2) \label{E:phi1},\\
\Phi_{u2}(\mathbf x) &=& (2\beta_u -2\Re \gamma_u)y_1 +2 \Im\gamma_u x_1 + \sum_{s=2}^{m-1} (\Im \eps_u^s) x_s \notag \\
&& \qquad -i\left( 2\Im \gamma_u y_1 + 2\Re \gamma_u x_1+ \sum_{s=2}^{m-1}(\Re \eps_u^s )x_s\right)+ O(2)\label{E:phi2},
\eea
for $u=m,...,n$. Now, using \eqref{E:sigma}, \eqref{E:phi1} and \eqref{E:phi2} to compute $\Lambda$, we obtain that 
\bes
\Lambda=\begin{pmatrix}
4 \Im \gamma_m & -2\beta_m-4 \Re \gamma_m & 
\cdots &  4 \Im \gamma_n & -2\beta_n - 4 \Re \gamma_n \\
 2\beta_m - 4 \Re\gamma_m& -4 \Im \gamma_m & 
 \cdots &  2\beta_n - 4 \Re\gamma_n& -4 \Im \gamma_n \\
 2\Im \eps_{m}^2 &  -2\Re \eps_m^2 & 
 \cdots & 2\Im \eps_{n}^2  & -2\Re \eps_n^2 \\
 2\Im \eps_{m}^3 &  -2\Re \eps_m^3 & 
 \cdots & 2\Im \eps_{n}^3  & -2\Re \eps_n^3 \\
\vdots &\vdots &\ddots &\vdots &\vdots  \\
 2\Im \eps_{m}^{m-1} &  -2\Re \eps_m^{m-1} & 
 \cdots & 2\Im \eps_{n}^{m-1}  & -2\Re \eps_n^{m-1} \\
\end{pmatrix}.
\ees
Thus, the transversality of $\Phi_f$ to $\operatorname{Hom}_\C^1(P,Q)$ at $p=0$ is equivalent to the condition that the rank of the above matrix is $2(n-m+1)$, which in turn is equivalent to the condition~\eqref{E:nondeg2}.

Next, we establish \eqref{E:Web} and \eqref{E:Coff}. Let $\mathcal V$ be a section of $\mathcal H_\C|_{\mathcal S_1}$ such that $\mathcal H_\C =\operatorname{span}_{\C\!}\{\mathcal V\}$ on $S_1$. Assume that condition~\eqref{E:nondeg} holds. Then $dB_m\wedge d\overline{B_m}\wedge\cdots\wedge B_n\wedge d\overline{B_n}\neq 0$ in $U$ (shrinking further, if necessary). This implies that the $2(n-m+1)$ vectors  
	\bes	
	\nabla(\Im B_m)(\mathbf x),-\nabla(\Re B_m)(\mathbf x),
		...,\nabla(\Im B_n)(\mathbf x),-\nabla(\Re B_n)(\mathbf x),
	\ees
form a real basis of $N_{\mathbf x}(\mathcal S_1,\mathcal M)$ at any $\mathbf x\in\mathcal S_1\cap U$
in that order. Furthermore, 
$$
\mathcal H_{\mathbf x}=\text{span}_{\rl}\left\{\Re \mathcal V(\mathbf x),\Im \mathcal V(\mathbf x)\right\} .
$$
Thus, the projection map $\pi:\mathcal H_{\mathbf x}\rightarrow N_{\mathbf x}(\calS_1, \M)$ with respect to this choice of bases is given by $\Pi(\mathbf x)$, where
	\bes
			\Pi=\begin{pmatrix}	
		(\Re \mathcal V)(\Im B_m) & (\Im \mathcal V)(\Im B_m)\\
		(\Re \mathcal V)(-\Re B_m) & (\Im \mathcal V)(-\Re B_m)\\
		\vdots &\vdots\\
		(\Re \mathcal V)(\Im B_n) & (\Im \mathcal V)(\Im B_n)\\
		(\Re \mathcal V)(-\Re B_n) & (\Im \mathcal V)(-\Re B_n)\\
		\end{pmatrix},
	\ees
 and the complexification $\pi_\C: \mathcal H_{\mathbf x}\otimes\C\rightarrow N_{\mathbf x}(\calS_1, \M)$ is given by 
	\beas
	\Pi_\C&=&
		\begin{pmatrix}	
		(\Re \mathcal V)(\Im B_m-i\Re B_m) & (\Im \mathcal V)(\Im B_m-i\Re B_m)\\
		\vdots &\vdots\\
		(\Re \mathcal V)(\Im B_n-i\Re B_n) & (\Im \mathcal V)(\Im B_n-i\Re B_n)
		\end{pmatrix}
		=_{\operatorname{CR}}
				\begin{pmatrix}	
		\mathcal VB_m(\mathbf x) & \overline {\mathcal V} B_m(\mathbf x)\\
		\vdots &\vdots\\
		\mathcal VB_n(\mathbf x) & \overline {\mathcal V} B_n(\mathbf x)\\
		\end{pmatrix},
	\eeas
where ``$=_{\operatorname{CR}}$" denotes equality up to elementary column operations (over $\C$).

By Definitions~\ref{D:Web} and~\ref{D:Coff}, respectively, $p=0\in W_j$ if and only if $\operatorname{rank}_{\rl\!}\Pi(0)=j$, and $p=0\in C_j$ if and only if $\operatorname{rank}_{\C\!}\Pi_\C(0)=j$. Choosing $\mathcal V$ as granted by Lemma~\ref{L:spanH}, and using the formulas for $B_j$'s in \eqref{E:CRSb}, we obtain that the former condition is equivalent to condition \eqref{E:Web}, and the latter is equivalent to condition \eqref{E:Coff} (after elementary column and row reductions). 
\end{proof}
We now describe the defining functions (up to $O(2)$-terms) of the set $\mathcal C_1$ in $\mathcal M$. 

\begin{lemma}\label{L:C1} Let $f:\mathcal M\rightarrow\Cn$ be as given by \eqref{E:PN}, \eqref{E:PN1}, \eqref{E:PN2} and \eqref{E:emb}. Let $B_m,...,B_n$ as in Lemma~\ref{L:S1}. Suppose \eqref{E:nondeg} holds. Then, shrinking $U$ further, if necessary, we have
	\be\label{E:deC1}
		\mathcal C_1\cap U=
		\left\{\mathbf x\in \mathcal S_1\cap U:	\operatorname{rank}_{\C\!}			\begin{pmatrix}	
		\mathcal VB_m(\mathbf x) & \overline {\mathcal V} B_m(\mathbf x)\\
		\vdots &\vdots\\
		\mathcal VB_n(\mathbf x) & \overline {\mathcal V} B_n(\mathbf x)\\
		\end{pmatrix}=1\right\}.
	\ee
In particular, if $0\in\mathcal C_1$, we may assume that 
	\be\label{E:defC1}
		\mathcal C_1\cap U=
		\left\{\mathbf x\in U: \mathscr C(\mathbf x)=0\right\}, 
	\ee
where, $\mathscr C=\left(\Re B_m,\Im B_m,...,\Re B_n,\Im B_n,\Re \wt B_{m+1},\Im \wt B_{m+1},...,\Re \wt B_n,\Im\wt B_n\right)$, with 
\begin{itemize}
\item [$*$] $B_m,...,B_n$ as in \eqref{E:defS1}, $(\beta_m,\gamma_m)\neq (0,0)$, $(\beta_{m+1},\gamma_{m+1})=\cdots=(\beta_n,\gamma_n)=(0,0)$, and  
\item [$*$] $\wt B_j(\mathbf x)=\wt \beta_j z_1+2\wt \gamma_j \overline{z_1}+\sum\limits_{s=2}^{m-1}\wt \eps_j^sx_s+O(2)$, with  
	\beas
		\wt\beta_j
&=&2\beta_m\theta_j-i\beta_m\sum_{s=2}^{m-1}\eps_j^s\beta_s-4\gamma_m\kappa_j,\quad 
\wt\gamma_j=		3\beta_m\pi_j-2\gamma_m\theta_j+i\gamma_m\sum_{s=2}^{m-1}\eps_j^s\beta_s,\\ 
\wt\eps_j^s&=&2\left(\beta_m\psi_j^s-\gamma_m\varphi_j^s\right),\qquad m+1\leq j\leq n.
	\eeas
\end{itemize} 
\end{lemma}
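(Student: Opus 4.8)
The plan is to unpack the rank-$1$ characterization of $\mathcal C_1$ obtained in the proof of Proposition~\ref{P:LocCoord} and then carry out explicit column reductions on the $2\times(n-m+1)$-type matrix $\bigl(\mathcal V B_u,\ \overline{\mathcal V}B_u\bigr)_u$ to produce the stated defining functions. First, recall from the proof of Proposition~\ref{P:LocCoord} that, under assumption \eqref{E:nondeg}, the complexified projection $\pi_\C$ is represented (up to column operations over $\C$) by the matrix whose $u$-th row is $\bigl(\mathcal V B_u(\mathbf x),\ \overline{\mathcal V}B_u(\mathbf x)\bigr)$, $m\le u\le n$, and that $p\in\mathcal C_j$ iff this matrix has complex rank $j$. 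Hence \eqref{E:deC1} is immediate: $\mathcal C_1\cap U$ consists of the points of $\mathcal S_1\cap U$ where this matrix has rank exactly $1$ — this is an open condition within the locus where the rank is $\le 1$... but the cleaner route is to observe that on all of $\mathcal S_1\cap U$ the rank is $\le 2$, it equals $2$ on the open set $\mathcal C_2$, and the rank-$\le 1$ locus is closed; near a point of $\mathcal C_1$ we can shrink $U$ so that $\mathcal C_2\cap U=\varnothing$ if the given point is not in $\overline{\mathcal C_2}$, or more carefully just define $\mathcal C_1\cap U$ by the vanishing of all $2\times2$ minors together with the non-vanishing of at least one entry. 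For the purposes of producing defining functions this is handled by the normalization in the next step.

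Next, I would put the matrix into a normal form at $0$. Since $0\in\mathcal C_1$, after a $\C$-linear change of the coordinates $z_m,\dots,z_n$ (equivalently, a constant invertible row operation on the system $B_m,\dots,B_n$, which does not affect $\mathcal S_1$), we may arrange that the leading linear parts line up so that $(\beta_m,\gamma_m)\ne(0,0)$ while $(\beta_u,\gamma_u)=(0,0)$ for $m+1\le u\le n$; this is exactly the rank-$1$ statement \eqref{E:Coff} for $j=1$ from Proposition~\ref{P:LocCoord}(c), now realized by a choice of basis. With this normalization, the first row $\bigl(\mathcal V B_m,\overline{\mathcal V}B_m\bigr)$ is nonvanishing at $0$ (its entries have nonzero linear part, as one computes from $\mathcal V=\partial/\partial z_1+O(1)$ applied to \eqref{E:CRSb}: $\mathcal V B_m = \beta_m + O(1)$ and $\overline{\mathcal V}B_m = 2\gamma_m + O(1)$). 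So rank $=1$ is equivalent to the vanishing, for each $u=m+1,\dots,n$, of the two $2\times2$ minors formed by rows $m$ and $u$, i.e.
\[
(\mathcal V B_m)(\overline{\mathcal V}B_u)-(\overline{\mathcal V}B_m)(\mathcal V B_u)=0,
\qquad m+1\le u\le n.
\]
Define $\wt B_u := (\mathcal V B_m)(\overline{\mathcal V}B_u)-(\overline{\mathcal V}B_m)(\mathcal V B_u)$ — wait, this is quadratic in the data; the right normalization that keeps $\wt B_u$ with a genuine linear part is to divide by the unit $\mathcal V B_m$ (or $\overline{\mathcal V}B_m$, whichever is a unit) near $0$, i.e. set $\wt B_u := \overline{\mathcal V}B_u - \frac{\overline{\mathcal V}B_m}{\mathcal V B_m}\,\mathcal V B_u$. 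Then $\mathcal C_1\cap U=\{B_m=\cdots=B_n=0,\ \wt B_{m+1}=\cdots=\wt B_n=0\}$, which gives \eqref{E:defC1}–\eqref{E:defS1}-style defining functions $\mathscr C$ after taking real and imaginary parts.

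Finally, the computation of the linear parts of $\wt B_u$. Here I would substitute the explicit expansion \eqref{E:CRSb} of $B_u$ and the explicit form $\mathcal V=\partial/\partial z_1+O(1)$ from Lemma~\ref{L:spanH}. On $\mathcal S_1$ we may use the relations $B_m=\dots=B_n=0$ to simplify; in particular, near $0$, $B_m=0$ gives $\beta_m z_1+2\gamma_m\overline{z_1}+\sum_s\eps_m^s x_s = O(2)$, which lets one eliminate one of the real variables. Applying $\mathcal V=\partial/\partial z_1$ to \eqref{E:CRSb} and keeping linear terms: $\mathcal V B_u = \beta_u + (\text{linear}) $ and $\overline{\mathcal V}B_u = 2\gamma_u + (\text{linear})$, where for $u\ge m+1$ the constants $\beta_u,\gamma_u$ vanish, so both $\mathcal V B_u$ and $\overline{\mathcal V}B_u$ start at order $1$ and their linear parts come from the cubic-in-$z$ and quadratic-cross terms of \eqref{E:CRSb} (the terms with coefficients $\kappa_u,\theta_u,\pi_u,\varphi_u^s,\psi_u^s$ and the $-i\sum_s\eps_u^s\beta_s|z_1|^2$ term). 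Substituting $\mathcal V B_m = \beta_m + O(1)$, $\overline{\mathcal V}B_m = 2\gamma_m + O(1)$ into $\wt B_u = \overline{\mathcal V}B_u\cdot(\beta_m) - \mathcal V B_u\cdot(2\gamma_m) + O(2)$ (clearing the unit $\mathcal V B_m$ and absorbing it, which only rescales), and reading off the coefficients of $z_1$, $\overline{z_1}$, and $x_s$, yields precisely the stated formulas
\[
\wt\beta_u = 2\beta_m\theta_u - i\beta_m\sum_{s=2}^{m-1}\eps_u^s\beta_s - 4\gamma_m\kappa_u,\quad
\wt\gamma_u = 3\beta_m\pi_u - 2\gamma_m\theta_u + i\gamma_m\sum_{s=2}^{m-1}\eps_u^s\beta_s,\quad
\wt\eps_u^s = 2(\beta_m\psi_u^s-\gamma_m\varphi_u^s).
\]
The main obstacle is purely bookkeeping: correctly tracking which quadratic/cubic monomials of $H_s,h_u$ survive into the linear part of $B_u$ (this is already recorded in \eqref{E:CRSb}), and then which of those survive into the linear part of $\wt B_u$ after the multiplication by the constants $\beta_m,\gamma_m$ and the use of the $\mathcal S_1$-relations — one must be careful that applying $\mathcal V=\partial/\partial z_1 + O(1)$ rather than exactly $\partial/\partial z_1$ only contributes $O(2)$ errors to $\wt B_u$ (since $B_u=O(1)$ and the $O(1)$ part of $\mathcal V$ kills a unit's worth of order), and that the choice of which of $\mathcal VB_m,\overline{\mathcal V}B_m$ is inverted does not matter because on $\mathcal S_1$ near $0$ the pair is nonzero by \eqref{E:nondeg}. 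Verifying these "$O(2)$ error" claims carefully is the one genuinely delicate point; the rest is algebra.
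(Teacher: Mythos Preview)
Your proposal is correct and follows essentially the same route as the paper: normalize via a $\C$-linear change of $z_m,\dots,z_n$ so that $(\beta_u,\gamma_u)=(0,0)$ for $u\ge m+1$, then take $\wt B_u$ to be the $2\times 2$ minor $(\mathcal V B_m)(\overline{\mathcal V}B_u)-(\overline{\mathcal V}B_m)(\mathcal V B_u)$ and read off its linear part. Your detour about dividing by the unit $\mathcal V B_m$ is unnecessary and slightly misleading: after the normalization, both $\mathcal V B_u$ and $\overline{\mathcal V}B_u$ vanish at $0$ for $u\ge m+1$ (since $\beta_u=\gamma_u=0$), so the minor itself is already linear to leading order, namely $\beta_m\,\overline{\mathcal V}B_u-2\gamma_m\,\mathcal V B_u+O(2)$, and this yields the stated $\wt\beta_j,\wt\gamma_j,\wt\eps_j^s$ directly without any rescaling.
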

\begin{proof} The proof of \eqref{E:deC1} is implicit in the proof of Proposition~\ref{P:LocCoord}. Now, assume that $0\in\mathcal C_1$. Let 
	\beas
		A(\mathbf x)&=&\begin{pmatrix}	
		\mathcal VB_m(\mathbf x) & \overline {\mathcal V} B_m(\mathbf x)\\
		\vdots &\vdots\\
		\mathcal VB_n(\mathbf x) & \overline {\mathcal V} B_n(\mathbf x)\\
		\end{pmatrix}
	\eeas
for the choice of $\mathcal V$ as granted by Lemma~\ref{L:spanH}. Then,
	\beas
		\mathcal VB_j(\mathbf x)
&=&\beta_j+(2\theta_j-i\sum_{s=2}^m\eps_j^s\beta_j)\overline z_1+2\kappa_jz_1+\sum_{s=2}^{m-1}\varphi_j^sx_s+O(2),\\
		\overline{\mathcal V}B_j(\mathbf x)
&=&2\gamma_j+(2\theta_j-i\sum_{s=2}^m\eps_j^s\beta_j)z_1+6\pi_j\overline z_1
		+2\sum_{s=2}^{m-1}\psi_j^sx_s+O(2).
	\eeas
Since $0\in\mathcal C_1$, we have that the rank of
	\bes
		A(0)=\begin{pmatrix}	
		\beta_m & 2\gamma_m\\
		\vdots &\vdots\\
		\beta_n & 2\gamma_n
		\end{pmatrix}\ \text{is }1. 
	\ees
Thus, one of the rows in $A$ must be non-zero. Suppose, the row corresponding to index $j$ is non-zero. Then, by composing $f$ with a biholomorphism of $\Cn$ that swaps $z_m$ with $z_j$, we have that $(\beta_m,2\gamma_m)\neq (0,0)$. Next, by the rank condition, we have the existence of $\lambda_j\in\C$ such that
	\bes	
		(\beta_j,2\gamma_j)=\lambda_j(\beta_m,2\gamma_m),\quad m+1\leq j\leq n. 
	\ees
Then, by composing $f$ with a biholomorphism of $\Cn$ that maps $z_j\mapsto z_j-\lambda_jz_m$, $m+1\leq j\leq n$, we can ensure that $(\beta_j,\gamma_j)= (0,0)$ for all $m+1\leq j\leq n$.

Finally, by continuity of the matrix-valued function $A$, there is a $\delta>0$ such that the first row of $A(\mathbf x)$ is non-zero for $\mathbf x\in\mathbb B(0,\delta)$. Thus, 
		\bes
		\mathcal C_1\cap \mathbb B(0,\delta)=
		\left\{\mathbf x\in \mathcal S_1\cap\mathbb B(0,\delta):	\mathcal V B_m(\mathbf x)\overline{\mathcal V} B_k(\mathbf x)-\overline{\mathcal V} B_m(\mathbf x)\mathcal V B_k(\mathbf x)=0, m+1\leq k\leq n\right\},
\ees
which yields \eqref{E:defC1}, since $(\beta_j,\gamma_j)= (0,0)$ for all $m+1\leq j\leq n$.

\end{proof}
 
We now compare Definition~\ref{D:Coff} with Coffman's notion of nondegenerate CR singularities. Suppose $f$ is an in \eqref{E:PN}. Following Coffman in \cite{C}, $M$ is said to satisfy the {\em first nondegeneracy condition} at $p$ if condition 
\eqref{E:Coff} holds with $j=2$. Under this assumption, after a holomorphic change of coordinates, $f$ can be assumed to satisfy the following conditions:
	\bea\label{E:redn}
		\beta_m=\cdots=\beta_{n-1}=\gamma_m=\cdots=\gamma_{n-2}=\gamma_n&=&0
			,\notag\\
		\beta_n=\gamma_{n-1}&=&1,\\
		\eps^2_{n-1}=\cdots=\eps^{m-1}_{n-1}&=&0.\notag
	\eea
Then, $M$ is said to satisfy the {\em second nondegeneracy condition} at $p$ if
	\be\label{E:Coff2}
		\text{rank}_\rl
		\begin{pmatrix}
		\Re\eps^2_m & \cdots & \Re\eps_m^{m-1}\\
		 \Im\eps^2_m & \cdots & \Im\eps_m^{m-1}\\
			\vdots & \ddots &\vdots\\
			\Re\eps^2_{n-2} & \cdots & \Re\eps_{n-2}^{m-1}\\
		 \Im\eps^2_{n-2} & \cdots & \Im\eps_{n-2}^{m-1}\\
\Re\eps^2_n & \cdots & \Re\eps_n^{m-1}\\
		 \Im\eps^2_n & \cdots & \Im\eps_n^{m-1}\\
		\end{pmatrix}_{2(n-m)\times (m-2)}=2(n-m).
	\ee
If both the nondegeneracy conditions hold at $p$, we say that $M$ satisfies {\em Coffman's nondegeneracy conditions} at $p$. We show that this is essentially equivalent to $p$ being a Coffman nondegenerate point in the sense of Definition~\ref{D:Coff}.

\begin{lemma}\label{L:NDcond} 
Let $f$ be as in \eqref{E:PN}. Then, $M$ satisfies Coffman's nondegeneracy conditions at $p$ if and only if 
condition~\eqref{E:nondeg2} holds, and condition \eqref{E:Coff} holds with $j=2$. 
\end{lemma}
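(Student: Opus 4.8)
\emph{Proof proposal.} The first observation is that Coffman's \emph{first nondegeneracy condition} at $p$ is, by definition, exactly the requirement that \eqref{E:Coff} hold with $j=2$. Hence this hypothesis occurs on both sides of the asserted equivalence, and the entire content of the lemma is the claim that, granting it, the \emph{second nondegeneracy condition} \eqref{E:Coff2} is equivalent to the transversality condition \eqref{E:nondeg2}. The plan is to prove this by a direct rank computation carried out in Coffman's normalized coordinates \eqref{E:redn}.

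The step that makes this permissible is Proposition~\ref{P:LocCoord}(a): condition \eqref{E:nondeg2} is equivalent to $\Phi_f$ being transverse to $\operatorname{Hom}_\C^1(P,Q)$ at $p$, which is an intrinsic (biholomorphically invariant) property of the pair $(M,p)$. When the first nondegeneracy condition holds, $f$ can, after a holomorphic change of coordinates, be brought to a pre-normal form satisfying \eqref{E:redn}; this normalized map is still of the type \eqref{E:PN}--\eqref{E:emb}, so Proposition~\ref{P:LocCoord}(a) applies to it as well, and the transversality of $\Phi_f$ at $p$ is the same whether computed before or after the normalization. I may therefore assume from the start that the coefficients $\beta_u,\gamma_u,\eps_u^s$ of $f$ satisfy \eqref{E:redn} and simply read off $d\mathscr S(0)$ from \eqref{E:nondeg2}.

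Substituting \eqref{E:redn}, the two rows of $d\mathscr S(0)$ coming from the index $u=n-1$ reduce to $(2,0,0,\dots,0)$ and $(0,-2,0,\dots,0)$; using them to clear the first two columns of all the other rows leaves the rows indexed by $u=m,\dots,n-2$ untouched (their first two entries already vanish) and turns the two rows indexed by $u=n$ into $(0,0,\Re\eps_n^2,\dots,\Re\eps_n^{m-1})$ and $(0,0,\Im\eps_n^2,\dots,\Im\eps_n^{m-1})$. Thus $\operatorname{rank}_\rl d\mathscr S(0)=2+r$, where $r$ is the real rank of the $2(n-m)\times(m-2)$ matrix with rows $(\Re\eps_u^2,\dots,\Re\eps_u^{m-1})$ and $(\Im\eps_u^2,\dots,\Im\eps_u^{m-1})$ for $u\in\{m,\dots,n-2,n\}$, which is precisely the matrix in \eqref{E:Coff2}. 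Since $2(n-m+1)=2+2(n-m)$, it follows that $\operatorname{rank}_\rl d\mathscr S(0)=2(n-m+1)$ if and only if $r=2(n-m)$, i.e.\ if and only if \eqref{E:Coff2} holds; together with the first paragraph this gives the lemma. The only delicate point is the invariance argument of the second paragraph, for which Proposition~\ref{P:LocCoord}(a) does the work; the remaining row reduction is routine.
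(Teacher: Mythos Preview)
Your proposal is correct and follows essentially the same approach as the paper's own proof: both reduce to verifying the equivalence of \eqref{E:Coff2} and \eqref{E:nondeg2} under the first nondegeneracy condition, invoke the biholomorphic invariance of \eqref{E:nondeg2} via Proposition~\ref{P:LocCoord}(a) to pass to the normalized coordinates \eqref{E:redn}, and then compare ranks directly. Your row-reduction is in fact slightly more explicit than the paper's, which writes down $d\mathscr S(0)$ with the $u=n$ rows already cleared (and with a harmless sign discrepancy in the $u=n-1$ block); your version makes clear why those zeros appear.
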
   
\begin{proof} It suffices to show that if condition \eqref{E:Coff} holds with $j=2$, then \eqref{E:Coff2} and \eqref{E:nondeg2} are equivalent conditions. In order to check \eqref{E:Coff2}, one first performs a local biholomorphic change of coordinates so that \eqref{E:redn} holds. On the other hand, condition \eqref{E:nondeg2} is invariant under local biholomorphic changes of coordinates at $p$ since it is equivalent to the fact that $\Phi_f: \M\rightarrow\operatorname{Hom}_\C(P,Q)$ is transverse to $\operatorname{Hom}^1_\C(P,Q)$ at $p$.

Thus, we may assume that, the coefficients in \eqref{E:PN1} and \eqref{E:PN2} satisfy \eqref{E:redn}. Then, referring to \eqref{E:nondeg2},
	\bes
	d\mathscr S(0)=
	\begin{pmatrix}
 0 & 0 & \Re \eps_m^2 & \cdots  & \Re \eps_m^{m-1}
\\
\vdots & \vdots &\vdots &\ddots & \vdots\\
0 & 0 & \Im \eps_{n-2}^2 &  \cdots &  \Im \eps_{n-2}^{m-1} 
\\
2 & 0 & 0 &\cdots & 0\\
0& 2 & 0 & \cdots & 0\\ 
0 & 0 & \Re \eps_n^2 & \cdots  & \Re \eps_n^{m-1} 
\\
0& 0 & \Im \eps_n^2 & \cdots  & \Im \eps_n^{m-1}
\end{pmatrix}.
	\ees
The rank of the above $m\times 2(n-m+1)$ matrix being $2(n-m+1)$ is clearly equivalent to \eqref{E:Coff2}. 
\end{proof}

Thus, if $f: \M\rightarrow\Cn$ is an embedding granted by Proposition~\ref{P:PertI}, then any $p\in C_2$ satisfies Coffman's nondegeneracy conditions. Combining this with \cite[Proposition~3.3]{C}, we obtain the following result. 

\begin{prop}\label{P:Coff} Let $\frac{2}{3}(n+1)\leq m<n$. Let $f:\M\rightarrow\Cn$ be an embedding, in general position (as granted by Proposition~\ref{P:PertI}). Let $p\in\mathcal C_2$. Assume that $f$ is real-analytic in a neighbourhood of $p$. Then, there exists a 
holomorphic change of coordinates in a neighbourhood of $f(p)$, mapping $f(p)$ to $0$, and such that, in the new coordinates, $M$ is given near $0$ by 
	\beas
		\left\{(z_1,...,z_n)\in\C^{n}: \ \ 
		\begin{aligned}
	&  y_s=0,\ 2\leq s\leq m-1,\\
	& z_{m}=\overline z_1^2,\\
	&z_{m+1}=|z_1|^2+\overline z_1(x_2+ix_3),\\
	&z_\ell
		=\overline z_1 (x_{2(\ell-m)}+ix_{2(\ell-m)+1}),\ m+2\leq \ell\leq n\\
\end{aligned}
\right\}.
	\eeas
\end{prop}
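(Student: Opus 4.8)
The plan is to reduce the statement to a direct invocation of \cite[Proposition~3.3]{C}, after verifying that the hypotheses of that result are met by a generic embedding. The essential point is that \cite{C} constructs the claimed normal form precisely under the assumption that $M$ satisfies Coffman's (first and second) nondegeneracy conditions at $p$, together with real-analyticity near $p$. So the work is to confirm that these conditions hold automatically at any point $p \in \mathcal C_2$ when $f$ is in general position.

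First I would recall that, since $f$ is granted by Proposition~\ref{P:PertI}, the Gauss map $\Phi_f$ is transverse to $\operatorname{Hom}^1_\C(P,Q)$; in particular $\calS_1$ is a submanifold near $p$, and by Proposition~\ref{P:LocCoord}(a) this transversality is equivalent to condition~\eqref{E:nondeg2} holding at $p$. Next, the hypothesis $p \in \mathcal C_2$ means, by Proposition~\ref{P:LocCoord}(c), that condition~\eqref{E:Coff} holds with $j=2$, i.e. the matrix with rows $(\beta_u,\gamma_u)$ has complex rank $2$. Now Lemma~\ref{L:NDcond} applies verbatim: it asserts that, given \eqref{E:Coff} with $j=2$, conditions \eqref{E:Coff2} and \eqref{E:nondeg2} are equivalent, so \eqref{E:Coff2} — Coffman's second nondegeneracy condition — holds as well. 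Since \eqref{E:Coff} with $j=2$ is Coffman's first nondegeneracy condition, we conclude that $M$ satisfies \emph{both} of Coffman's nondegeneracy conditions at $p$. (One should also note that putting $f$ into the pre-normal form \eqref{E:PN}--\eqref{E:emb} near $p$ is legitimate: the constructions in \cite[\S 3]{C} that produce it are holomorphic changes of coordinates, and by the remark following Lemma~\ref{L:NDcond} the relevant nondegeneracy conditions are invariant under such changes, so no generality is lost.)

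With both nondegeneracy conditions in hand and $f$ real-analytic near $p$, I would then simply quote \cite[Proposition~3.3]{C}, which produces a local holomorphic coordinate change taking $f(p)$ to $0$ and $M$ to the stated model. I would match the output of that proposition with the displayed equations, checking that $\frac{2}{3}(n+1)\le m < n$ is exactly the dimensional range in which Coffman's normalization terminates (this inequality is what guarantees that the quadratic data $(\beta_u,\gamma_u,\eps_u^s)$ has enough room to be put in the asserted diagonal/identity form while the higher-order terms are absorbed), and translating Coffman's notation for the model into ours. The main obstacle is not conceptual but bookkeeping: one must be careful that the successive holomorphic changes of coordinates used to reach the pre-normal form, to impose \eqref{E:redn}, and finally to apply \cite[Proposition~3.3]{C}, are mutually compatible and that the invariance of \eqref{E:nondeg2} is genuinely preserved at each stage — i.e. that ``generic'' in the sense of Proposition~\ref{P:PertI} survives all these normalizations. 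Once that is checked, the proof is essentially a citation.
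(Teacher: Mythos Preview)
Your proposal is correct and follows essentially the same approach as the paper: the paper likewise observes (via Lemma~\ref{L:NDcond}) that for an embedding in general position as in Proposition~\ref{P:PertI}, any $p\in\mathcal C_2$ satisfies both of Coffman's nondegeneracy conditions, and then directly invokes \cite[Proposition~3.3]{C} to obtain the normal form. Your additional remarks about bookkeeping and invariance under coordinate changes are reasonable but not needed beyond what the paper already handles.
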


\section{Some stratifications of the set of order-one CR singularities}\label{S:strat}
We now describe the generic global structure of the various sets of degeneracy introduced in the previous section. For this, we express $\mathcal W_j$'s and $\mathcal C_j$'s as level sets of some global maps defined on $\calS_1$. We first produce an appropriate complex bundle over 
$\operatorname{Hom}_\C^1(T^{\C\!} \M,\M\times\Cn)$. Throughout this discussion, let
	\bes
		P=T^{\C\!} \M\quad \text{and}\quad Q=\M \times \Cn.
	\ees
Given $p\in \M$ and $\varphi\in \operatorname{Hom}_\C^1(P_p,Q_p)$, let $H_\varphi=\ker \varphi$ and $N_\varphi=T_{\varphi}\operatorname{Hom}_\C(P,Q)/T_{\varphi}\operatorname{Hom}_\C^1(P,Q)\cong \operatorname{Hom}_\C(\ker \varphi,{\coker}\varphi)$. Then, there exists a real vector bundle 
\be\label{E:EPQ}
E(P,Q)\xrightarrow{\pi_E} \operatorname{Hom}_\C^1(P,Q)
\ee
 of rank $(4n-4m+4)$ given by $E_\varphi(P,Q)=\operatorname{Hom}_\rl(H_\varphi,N_\varphi)$. Since
 \be\label{E:comp} 
\operatorname{Hom}_\rl(H_\varphi,N_\varphi)\cong \operatorname{Hom}_\C(H_\varphi\otimes_\R \C,N_\varphi),\ee
$E(P,Q)$ is also a complex bundle of (complex) rank $(2n-2m+2)$. Given a real homomorphism $A:H_\varphi\rightarrow N_\varphi$, we denote by $A_\C$ its complexification (in $ \operatorname{Hom}_\C(H_\varphi\otimes\C,N_\varphi)$). We obtain two distinct stratifications: 
	\bes
		E(P,Q)=\sqcup_{j=0}^2 E_j(P,Q)=\sqcup_{j=0}^2F_j(P,Q)
	\ees
given by 
	\beas
		E_j(P,Q)|_\varphi		
&=&\{A\in E(P,Q)|_\varphi:\operatorname{rank}_{\rl\!} A=j\},\\
		F_j(P,Q)|_\varphi
&=&\{A\in E(P,Q)|_\varphi:\operatorname{rank}_{\C\!}A_{\C\!}=j\}, \quad j=0,1,2.
	\eeas
Note that $E_0(P,Q)=F_0(P,Q)$, $E_1(P,Q)\subsetneq F_1(P,Q)$, and $F_2(P,Q)\subsetneq E_2(P,Q)$ are real submanifolds such that $\overline E_k=\sqcup_{j=0}^kE_j$ and $\overline F_k=\sqcup_{j=0}^kF_j$. Furthermore,
\bea\label{E:codim}
	\codim_\rl E_0(P,Q)=\codim_\rl F_0(P,Q)&=&4n-4m+4 \notag\\
	\codim_\rl E_1(P,Q)&=&2n-2m+1\\
	\codim_\rl F_1(P,Q)&=&2n-2m.\notag	
\eea
Now assume that $\calS_1$ is a smooth submanifold of $\M$. We have that $\Phi=\Phi_f$ restricted to $\calS_1$ is a section of the 
fibre bundle $\operatorname{Hom}_\C^1(P,Q)|_{\calS_1}$. Complexifying $T\M$ extends 
$d\Phi:T\M\rightarrow T\operatorname{Hom}_\C(P,Q)$ to 
	\bes 
		(d\Phi)_\C:T^{\C\!} \M\rightarrow T\operatorname{Hom}_\C(P,Q).
	\ees
We now associate a second-order Gauss map $\Psi=\Psi_f:S_1\rightarrow E(P,Q)$ to $f$ as follows. For each $p\in S_1$, consider the composition
	\bes
		\psi_p:\ker \Phi(p)=(\overline {H_\C})_p\hookrightarrow 
		T^{\C\!}_p \M\xrightarrow{(d\Phi)_\C(p)} T_{\Phi(p)}\operatorname{Hom}_\C(P,Q)\xrightarrow{\pi_p} N_{\Phi(p)}.
	\ees   
Then $\Psi_f:p\rightarrow\psi_p$ gives a map $\Psi_f:S_1\rightarrow E(P,Q)$ 
such that the following diagram commutes:
\[ \begin{tikzcd}
\M\supset \calS_1 \arrow{rd}{\Phi_f} \arrow{r}{\Psi_f}  & E(P,Q)\arrow{d}{\pi_E} \\%
& \operatorname{Hom}_\C^1(P,Q) &\hspace{-32pt}\subset \operatorname{Hom}(P,Q).
\end{tikzcd}
\]

\begin{prop}\label{P:PertII} 
Let $f: \M\hookrightarrow\Cn$ be a smooth embedding granted by Proposition~\ref{P:PertI}, i.e., 
$$
\Phi_f: \M\rightarrow \operatorname{Hom}_\C(P,Q)
$$ 
is transverse to $\operatorname{Hom}_\C^\nu(P,Q)$ for all $\nu=1,...,\lfloor{m/2}\rfloor$. 
Assume that $S_\nu=\emptyset$ for all $\nu\geq 2$. Thus, $\calS=\calS_1$ is a compact $(3m-2n-2)$-submanifold of $\M$. 
After a generic small perturbation,  $\Psi_f$ is transverse to $E_j(P,Q)$ and $F_j(P,Q)$ for $j=0,1$, i.e., 
	\begin{itemize}
		\item [(a)] $\W_0=\calC_0$ is a compact $(7m-6n-6)$-submanifold of $\calS=\calS_1$,
		\item [(b)] $\W_1$ is a (not necessarily closed) $(5m-4n-3)$-submanifold of $\calS$ with $\overline {\W_1}=\W_0\cup \W_1$, and
		\item [(c)]  $\calC_1$ is a (not necessarily closed) $(5m-4n-2)$-submanifold of $S$ with $\calC_0\cup \calC_1=\overline{\calC_1}$.
	\end{itemize}
In particular, if $6n>7m-6$, there exist embeddings $f: \M\rightarrow\Cn$ such that $\calS=\calS_1$ is a compact 
$(3m-2n-2)$-submanifold of $\M$, $\calC_0=\W_0$ is empty, $\calC_1$ is a compact $(5m-4n-2)$-submanifold of $\calS$, 
$\W_1$ is a compact $(5m-4n-3)$-submanifold of $\calC_1$. Furthermore,
\begin{itemize}
\item [(i)] any $\calC^2$-small perturbation of $f$ compactly supported on $\mathcal M\setminus\overline{\mathcal C_1}$ continues to have these 
properties, and 
\item [(ii)] any $\calC^3$-small perturbation of $f$ continues to have these 
properties. 
\end{itemize}
\end{prop}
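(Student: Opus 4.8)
\textbf{Proof proposal for Proposition~\ref{P:PertII}.}

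The plan is to apply the parametric transversality theorem to the second-order Gauss map $\Psi_f$, in the same spirit as the proof of Proposition~\ref{P:PertI}, but now working over the already-perturbed situation where $\Phi_f$ is transverse to every $\operatorname{Hom}_\C^\nu(P,Q)$ and all CR-singularities are of order one. Since $S_\nu = \emptyset$ for $\nu \ge 2$, the codimension formula of Proposition~\ref{P:PertI} gives that $\calS = \calS_1$ is a compact submanifold of real dimension $m - 2(n-m+1) = 3m - 2n - 2$, and by Lemma~\ref{L:N(S_1,M)} the bundle $N(\calS_1,\M) \cong K_\Phi$ is a genuine complex bundle, so $\Psi_f$ is a well-defined section of $E(P,Q)|_{\calS_1}$. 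First I would verify that a suitable finite-dimensional family of ambient affine perturbations $\mathbf{A} \circ f$ (as in Proposition~\ref{P:PertI}, possibly augmented by the cubic transformations \eqref{E:cub} to move the relevant second-order coefficients independently) induces, via the composite defining $\Psi$, a map $(\mathbf{A}, p) \mapsto \Psi_{\mathbf{A}\circ f}(p)$ that is transverse to each $E_j(P,Q)$ and $F_j(P,Q)$, $j = 0,1$. Granting the transversality of this big family, the parametric transversality theorem yields a residual — hence dense — set of perturbations for which $\Psi_f \pitchfork E_j, F_j$; then $\W_0 = \calC_0 = \Psi_f^{-1}(E_0)$, $\W_1 = \Psi_f^{-1}(E_1)$, $\calC_1 = \Psi_f^{-1}(F_1)$, and the dimension counts in (a), (b), (c) follow by subtracting the codimensions \eqref{E:codim} from $\dim \calS_1 = 3m - 2n - 2$: e.g. $(3m-2n-2) - (4n-4m+4) = 7m - 6n - 6$ for $\W_0$, $(3m-2n-2) - (2n-2m+1) = 5m - 4n - 3$ for $\W_1$, and $(3m-2n-2) - (2n-2m) = 5m - 4n - 2$ for $\calC_1$. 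The closure relations $\overline{\W_1} = \W_0 \cup \W_1$ and $\overline{\calC_1} = \calC_0 \cup \calC_1$ are inherited from the corresponding relations $\overline{E_1} = E_0 \sqcup E_1$, $\overline{F_1} = F_0 \sqcup F_1$ recorded just before the proposition, together with continuity of $\Psi_f$.

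For the ``in particular'' clause, assume $6n > 7m - 6$, so that $7m - 6n - 6 < 0$; then transversality forces $\W_0 = \calC_0 = \emptyset$. Consequently $\calC_1 = \Psi_f^{-1}(F_1)$ is closed in $\calS_1$ (its only possible boundary stratum $\calC_0$ is empty), hence a compact $(5m-4n-2)$-submanifold; similarly $\W_1 \subseteq \calC_1$ is closed, hence compact of dimension $5m - 4n - 3$. For the stability statements (i) and (ii), I would argue as follows. The conditions ``$\calS_1$ is a compact submanifold of the stated dimension'' and ``$\W_0 = \emptyset$, $\calC_1$ and $\W_1$ are compact submanifolds of the stated dimensions'' are, by Proposition~\ref{P:LocCoord} and Lemma~\ref{L:C1}, expressible as: (1) a first-order nondegeneracy condition \eqref{E:nondeg2} along $\calS_1$ (transversality of $\Phi_f$ to $\operatorname{Hom}_\C^1$), which is an open condition in the $\calC^2$-topology of $f$ since it involves $d\mathscr S$, i.e. first derivatives of functions built from second-order coefficients of $f$; and (2) a second-order nondegeneracy condition (transversality of $\Psi_f$), which involves one further derivative — hence is open in the $\calC^3$-topology of $f$. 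A $\calC^2$-small perturbation compactly supported away from $\overline{\calC_1}$ cannot alter $\Psi_f$ near $\overline{\calC_1}$, where the delicate $F_1$-transversality lives, and elsewhere only the $E_1$/$F_1$-avoidance (an open $\calC^2$-condition, since away from $\overline{\calC_1}$ we need only that $\Psi_f$ stays in $F_2$, cut out by a non-vanishing condition on quantities depending on two derivatives of $f$ — wait, this still needs care) must be preserved; this gives (i). An arbitrary $\calC^3$-small perturbation preserves all the transversality conditions, both first- and second-order, by openness of transversality, giving (ii).

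The main obstacle I anticipate is the transversality verification for the \emph{second}-order Gauss map: unlike the first-order case in Proposition~\ref{P:PertI}, where ambient affine maps already supplied enough directions to hit the whole normal space $N_{df_\C(p)}\operatorname{Mat}^1_{n\times m}(\C)$, one must now check that perturbations of $f$ move $\Psi_f(p) \in E_\varphi(P,Q) = \operatorname{Hom}_\rl(H_\varphi, N_\varphi)$ in enough directions to be transverse to the rank loci $E_j, F_j$ — and this requires controlling how the \emph{derivative} of $\Phi_f$ along $\ker\Phi_f$ varies, which in the pre-normal form \eqref{E:PN1}–\eqref{E:PN2} is governed by the third-order coefficients $\kappa_u, \theta_u, \pi_u, \varphi_u^s, \psi_u^s, \sigma_u^{s,t}$ (compare the explicit formulas in Lemma~\ref{L:C1}). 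I would handle this by using the cubic transformations \eqref{E:cub} — more precisely, by enlarging the parameter space $Y$ from affine maps to maps of the form $z \mapsto \mathbf{A}(z) + (\text{controlled cubic terms})$, or by directly perturbing $f$ in a coordinate patch and reading off, via Lemmas~\ref{L:S1}, \ref{L:spanH}, and the matrix $A(\mathbf x)$ of Lemma~\ref{L:C1}, that the map $(\text{coefficients}) \mapsto \Psi_f(p)$ is a submersion onto the fibre $E_\varphi(P,Q)$, which immediately yields transversality to every subbundle. A secondary subtlety, already flagged parenthetically above, is the precise openness claim in (i): one must confirm that, on $\calS_1 \setminus \overline{\calC_1}$, membership in $F_2 = E(P,Q) \setminus \overline{F_1}$ is cut out by the non-vanishing of the $2\times 2$ minors of the matrix $\bigl(\mathcal V B_u, \overline{\mathcal V} B_u\bigr)_u$ from Lemma~\ref{L:C1}, whose entries depend on at most two derivatives of $f$ along $\calS_1$ — so that a $\calC^2$-small, compactly-supported-away-from-$\overline{\calC_1}$ perturbation indeed preserves it; I expect this to be a routine but bookkeeping-heavy check.
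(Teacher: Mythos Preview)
Your overall framework is right---parametric transversality applied to the second-order Gauss map $\Psi_f$---and your dimension counts, closure statements, and stability reasoning for (i) and (ii) are essentially what the paper does (the paper just cites Hirsch for the last part). But there is a genuine gap in the core transversality step, and it stems from a miscount of the order of jets governing $\Psi_f$.

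You write that moving $\Psi_f(p)$ ``requires controlling how the derivative of $\Phi_f$ along $\ker\Phi_f$ varies, which\ldots is governed by the third-order coefficients $\kappa_u,\theta_u,\pi_u,\ldots$'', and you propose to use the affine family of Proposition~\ref{P:PertI} augmented by the cubic holomorphic transformations~\eqref{E:cub}. This is off by one order. By definition $\Psi_f(p)=\pi_p\circ (d\Phi_f)_\C(p)|_{\ker\Phi_f(p)}$, and since $\Phi_f=df_\C$, the map $(d\Phi_f)(p)$ is the \emph{second} differential of $f$ at $p$. Concretely, Proposition~\ref{P:LocCoord}\,(b),(c) shows that $\Psi_f(0)$ is exactly the matrix with entries built from the second-order coefficients $\beta_u,\gamma_u$; the third-order coefficients you list only enter in $d\Psi_f$ along $\calS_1$ (as in Lemma~\ref{L:C1}), which is \emph{not} what parametric transversality needs---it needs the family map $(\mathbf A,p)\mapsto\Psi_{\mathbf A\circ f}(p)$ to be transverse, and for that it suffices to move $\Psi_f(p)$ over the whole fibre by varying $\mathbf A$ at fixed $p$.

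Your proposed family cannot do this. Affine maps $\mathbf A(z)=Az+a$ act linearly on second-order jets ($d^2(\mathbf A\circ f)=A\cdot d^2f$), so they rotate but do not translate the pair $(\beta_u,\gamma_u)$; in particular if $d^2f(p)=0$ then $\Psi_{\mathbf A\circ f}(p)=0$ for every affine $\mathbf A$, and you cannot escape $E_0$. The cubic transformations~\eqref{E:cub} are holomorphic of order $\geq 3$, so they do not touch the $2$-jet of $f$ at $p$ at all. Thus affine\,$+$\,cubic is insufficient. The paper instead enlarges the family to include degree-$2$ polynomial maps: perturbing $z_u\mapsto z_u+b_u|z_1|^2+c_u\overline{z_1}^2$ for $m\le u\le n$ shifts $\beta_u\to\beta_u+\operatorname{Re}b_u$, $\gamma_u\to\gamma_u+c_u$ (etc.)\ directly, keeps $df_\C(p)$ fixed so one stays in a single fibre of $E(P,Q)$, and makes the family map an affine submersion onto that fibre $\operatorname{Mat}_{(2n-2m+2)\times 2}(\rl)$. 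Transversality to every $E_j,F_j$ is then automatic. Once you replace ``cubic'' by ``quadratic'' in your family, your proposal goes through along the paper's lines.
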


\begin{proof}	As in the proof of Proposition~\ref{P:PertI}, we use the parametric transversality theorem. Let $f$ be as granted by Proposition~\ref{P:PertII}. Assume that $\calS_\nu=\emptyset $ for all $\nu\geq 2$. We denote $\calS$ by $\calS^f$ and 
$\calS_\nu$ by $\calS_\nu^f$ in this proof. 

Let $U(n)$ and $\mathbf P_2(\Cn)$ denote the space of $n\times n$ unitary matrices and the space of homogeneous degree 
$2$ polynomial maps on $\Cn$, respectively. Consider the following family of transformations:
\bes
	\wt Y=\{\mathbf A(z)=a+A' z+A''(z):a\in\Cn, A'\in \text{U}(n), A''\in\mathbf P_2(\Cn)\}.
\ees
For any $\mathbf A\in \wt Y$, if $A''$ is sufficiently small, then 
\begin{itemize}
\item [(i)] $\calS_\nu^{\mathbf A\circ f}=\emptyset$ for all $\nu\geq 2$, i.e., 
	$\calS^{\mathbf A\circ f}=\calS_1^{\mathbf A\circ f}$,
\item [(ii)] $\calS^{\mathbf A\circ f}$ is a compact $(3m-2n-2)$-submanifold of $\M$,
\item [(iii)] viewing any small tubular neighbourhood $\mathcal T$ of $\calS^f$ in $\M$ as a disk bundle over 
	$\calS^f$, there is a smooth regular section $\eta_{\mathbf A}: \calS^f\rightarrow \mathcal T$ whose image is precisely 
	$\calS^{\mathbf A\circ f}$.  
\end{itemize}
Let $Y\subset\wt Y$ denote the set of those $\mathbf A\in \wt Y$ whose corresponding $A''$ is sufficiently small 
so that conditions (i)-(iii) hold. Let $E(P,Q)$ be the bundle over $\operatorname{Hom}_\C^1(P,Q)$ defined in \eqref{E:EPQ}.

Now, consider the smooth map $G:Y\times \calS^f\rightarrow E(P,Q)$ given by 
	\bes
		G(\mathbf A, p)=\left(\Psi_{\mathcal A\circ f}\circ \eta_{\mathbf A}\right)(p).
	\ees
We show that $G$ is transverse to $Z\subset E(P,Q)$, where $Z$ is any one of the submanifolds $E_0(P,Q)=F_0(P,Q)$, $E_1(P,Q)$ and $F_1(P,Q)$. The result then follows from the parametric transversality theorem and the codimension count given in \eqref{E:codim}.

We first make some simplifications. By relabelling $\mathbf A\circ f$ as $f$, verifying the transversality of $G$ to $Z$ at $G(\mathbf A,p)\in Z$ is equivalent to verifying the transversality of $G$ to $Z$ at $G(I,p)\in Z$, where $I$ is the identity map on $\Cn$. Moreover, for any translation map $T$ and unitary map $U$ on $\Cn$, $G(U\circ T\circ \mathbf A,p)$ is transverse to $Z$ if and only if $G(\mathbf A,p)$ is transverse to $Z$. So, we assume that $f(p)=0$ and $M$ is given near $0$ by \eqref{E:PN}. Then, 
\be\label{E:difff}
df_\C(p) =
\left(\begin{array}{c | c}
		\begin{array}{c|c}
			1 & i\\
				\hline
			\mathbf{0}_{(m-2)\times 1} & \mathbf{0}_{(m-2)\times 1} 
		\end{array}
		& 
		\begin{array}{c}
			\mathbf{0}_{1\times(m-2)} \\
				\hline
			\mathbf{I}_{(m-2)\times (m-2)}
		\end{array}\\
		\hline
		  		\mathbf{0}_{(n-m+1)\times 2} & \mathbf{0}_{(n-m+1)\times(m-2)}
 \end{array}\right) .
\ee
Thus, $G(I,p)$ lies in 
	\bes 
		E(P,Q)|_{df_\C(p)}=\text{the fiber of $E(P,Q)$ over $df_\C(p)\in\operatorname{Hom}_\C^1(P,Q)$}.
	\ees

Next, we consider $G$ restricted to the following submanifold $Y'\subset Y$:
	\beas
	Y'=\left\{\mathbf A\in Y:\mathbf A(z_1,...,z_n)= \left(z_1,...,z_{m-1},z_m+b_m|z_1|^2+c_m\overline z_1^2,...,z_n+b_n|z_1|^2+c_n\overline z_1^2\right),\right.\\	
\left.	\text{for some }b_m,...,b_n,c_m,...,c_n\in\C\right\},
	\eeas
where we give $Y'$ the coordinates $(b,c)=(b_m,...,b_n,c_m,...,c_n)\in\C^{n-m+1}$. With this choice, $\eta_{\mathbf A}(p)=p$ and $G(\mathbf A,p)\in E(P,Q)|_{df_\C(p)}$ for all $\mathbf A\in Y'$. Since we work in this fixed fiber, it can be identified with $\operatorname{Mat}_{(2n-2m+2)\times 2}(\rl)$. From the computations in the proof of Proposition~\ref{P:LocCoord}.b, we obtain that 
	\be\label{E:G(A,p)}
	\Psi_{\mathbf A\circ f}=
		G(\mathbf A,p)=\begin{pmatrix}
		2\Im\gamma_m +\Im b_m+2\Im c_m & \beta_m+2\Re\gamma_m+\Re b_m-2\Re c_m\\
		\beta_m-2\Re\gamma_m+\Re b_m+2\Re c_m & 2\Im\gamma_m-\Im b_m+2\Im c_m\\
		\vdots &\vdots\\
		2\Im\gamma_n+\Im b_n+2\Im c_n & \beta_n+2\Re\gamma_n+\Re b_n-2\Re c_n\\
		\beta_n-2\Re\gamma_n+\Re b_n+2\Re c_n & 2\Im\gamma_n-\Im b_n+2\Im c_n
		\end{pmatrix},
	\ee
when $\mathbf A=(b,c)\in Y'$. It follows that 
\beas
	DG_{(I,p)}\left(T_{(I,p)}(Y'\times\{p\})\right)
&=&\text{span}_\rl \left\{\partl{G}{\Re b_j},\partl{G}{\Im b_j},\partl{G}{\Re c_j},\partl{G}{\Im c_j}:j=m,...,n\right\}\Big|_{(I,p)}\\
&=&
\operatorname{Mat}_{(2n-2m+2)\times 2}(\rl)\\
&=& T_{G(I,p)}E(P,Q).
\eeas
Thus, $G$ is transverse to $Z$ at $(I,p)$ for each of the relevant choices of $Z$. 

The last statement of the proposition follows from Hirsch~\cite[Lemma 2.3, p. 76]{H}.
\end{proof}
Analoguous to Proposition~\ref{P:LocCoord}.a, one obtains the following characterization of transversality of $\Psi_f$ to $F_1(P,Q)$. We skip the proof as it follows the same ideas as those of Proposition~\ref{P:LocCoord}.
 
\begin{prop}\label{P:transC1} Assume that $4n\leq 5m-2$. Let $f:\mathcal M\rightarrow\Cn$ be as given by \eqref{E:PN}, \eqref{E:PN1}, \eqref{E:PN2}, and \eqref{E:emb}. Suppose $\Phi_f: \M\rightarrow\operatorname{Hom}_{\C\!}(P,Q)$ is transverse to $\operatorname{Hom}_\C^1(P,Q)$ at $p=0$, and $0\in\mathcal C_1$. Assume $\mathcal C_1$ is as granted by Lemma~\ref{L:C1}, i.e.,  $\mathcal C_1=\{\mathbf x\in\M: \mathscr C(\mathbf x)=0\}$, where $\mathscr C$ is as in \eqref{E:defC1}. Then, the Gauss map $\Psi_f$ is transverse to $F_1(P,Q)$ if and only if 
the $(4n-4m+2)\times m$ matrix  
\be\label{E:TC1}
\renewcommand*{\arraystretch}{1.5}
d\mathscr C(0)=
	\begin{pmatrix}
	\beta_m+\Re\gamma_m & \Im\gamma_m & \Re\eps_m^2 & \cdots & \Re\eps_m^{m-1}\\
	\Im\gamma_m & \beta_m-\Re\gamma_m & \Im\eps_m^2 & \cdots & \Im\eps_m^{m-1}\\
	0 & 0 & \Re\eps_{m+1}^2 & \cdots & \Re\eps_{m+1}^{m-1}	\\
	0 & 0 & \Im\eps_{m+1}^2 & \cdots & \Im\eps_{m+1}^{m-1}	\\
	\vdots & \vdots & \vdots &\ddots &\vdots\\
		\Re\wt\beta_{m+1}+\Re\wt\gamma_{m+1} & \Im\wt\gamma_m-\Im\wt\beta_{m+1} & \Re\wt\eps_{m+1}^2 & \cdots & \Re\wt\eps_{m+1}^{m-1}\\
	\Im\wt\gamma_{m+1}+\Im\wt\gamma_{m+1} & \Re\wt\beta_{m+1}-\Re\wt\gamma_{m+1} & \Im\wt\eps_{m+1}^2 & \cdots & \Im\wt\eps_{m+1}^{m-1}\\
\vdots & \vdots & \vdots &\ddots &\vdots
	\end{pmatrix}
\ee
has full rank $4n-4m+2$. 
\end{prop}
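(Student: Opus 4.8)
The plan is to follow the same strategy as Proposition~\ref{P:LocCoord}(a): express the transversality of $\Psi_f$ to $F_1(P,Q)$ at $0$ as the surjectivity of a single linear map, and then evaluate that map in the coordinates of \eqref{E:PN}. The structural point that makes this manageable is that $F_1(P,Q)$ is a fibre subbundle of $E(P,Q)$ over \emph{all} of $\operatorname{Hom}_\C^1(P,Q)$, of real codimension $2(n-m)$ by \eqref{E:codim}; hence its normal bundle is ``vertical,'' i.e.\ at $\varphi_0:=\Phi_f(0)=df_\C(0)$ one has $N_{\Psi_f(0)}(F_1,E)\cong N_{A(0)}\!\big(F_1|_{\varphi_0},E|_{\varphi_0}\big)$, where $A(0)$ denotes $\Psi_f(0)$ read off in a local trivialization of $E(P,Q)$ near $\varphi_0$ adapted to $F_1$. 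Since the horizontal part of $d\Psi_f$ is just $d(\Phi_f|_{\mathcal S_1})$ and is automatically tangent to $F_1$, the desired transversality at $0$ is equivalent to the surjectivity of
\bes
T_0\mathcal S_1\ \xrightarrow{\ (d\Psi_f^{\mathrm{vert}})_0\ }\ E(P,Q)|_{\varphi_0}\ \twoheadrightarrow\ N_{A(0)}\!\big(F_1|_{\varphi_0},E|_{\varphi_0}\big),
\ees
where $\Psi_f^{\mathrm{vert}}$ is the fibre component of $\Psi_f$ in that trivialization.

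Next I would pin down $\Psi_f^{\mathrm{vert}}$ in coordinates. Building the trivialization from the frames used in the proof of Proposition~\ref{P:LocCoord}(b) --- the section $\mathcal V$ of Lemma~\ref{L:spanH} spanning $\mathcal H_\C$, and the frame of $N(\mathcal S_1,\mathcal M)$ dual to $d(\Im B_j),-d(\Re B_j)$, $m\le j\le n$ --- one finds, exactly as there (and modulo the elementary column operations denoted ``$=_{\operatorname{CR}}$''), that $\Psi_f^{\mathrm{vert}}$ is represented by
\bes
\mathbf x\ \longmapsto\ A(\mathbf x)=\begin{pmatrix} \mathcal VB_m(\mathbf x) & \overline{\mathcal V}B_m(\mathbf x)\\ \vdots & \vdots\\ \mathcal VB_n(\mathbf x) & \overline{\mathcal V}B_n(\mathbf x)\end{pmatrix}\in\operatorname{Mat}_{(n-m+1)\times 2}(\C)\cong E(P,Q)|_{\varphi_0},
\ees
with $F_1|_{\varphi_0}$ corresponding to the locus of matrices of complex rank $1$. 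Since $0\in\mathcal C_1$ and the normalization of Lemma~\ref{L:C1} gives $(\beta_m,\gamma_m)\ne(0,0)$ and $(\beta_j,\gamma_j)=(0,0)$ for $m+1\le j\le n$, the matrix $A(0)$ has nonzero first row $(\beta_m,2\gamma_m)$ and vanishing lower rows; consequently $\Psi_f^{-1}(F_1(P,Q))=\mathcal C_1$ near $0$, and near $A(0)$ the submanifold $F_1|_{\varphi_0}$ is cut out by the $n-m$ complex minors $\mu_k(W)=W_{11}W_{k2}-W_{12}W_{k1}$, $k=2,\dots,n-m+1$, whose differentials at $A(0)$ are linearly independent (they involve disjoint sets of entries).

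It then remains to compute $d(\mu_k\circ\Psi_f)(0)$. Because the $k$-th row of $A(0)$ vanishes, only the terms $\beta_m\,dW_{k2}-2\gamma_m\,dW_{k1}$ survive in $d\mu_k(A(0))$; substituting the expansions of $\mathcal VB_j$ and $\overline{\mathcal V}B_j$ from the proof of Lemma~\ref{L:C1} shows that $\mu_k\circ\Psi_f$ has linear part exactly $\wt B_{m+k-1}$. Hence the displayed composite is surjective if and only if the $2(n-m)$ real $1$-forms $d(\Re\wt B_j)(0),d(\Im\wt B_j)(0)$ ($m+1\le j\le n$) are linearly independent modulo $\operatorname{span}_\R\{d(\Re B_j)(0),d(\Im B_j)(0):m\le j\le n\}$. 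By Proposition~\ref{P:LocCoord}(a) and the transversality hypothesis on $\Phi_f$, the latter $2(n-m+1)$ forms are already independent, so the whole condition is equivalent to $d\mathscr C(0)$ having rank $2(n-m+1)+2(n-m)=4n-4m+2$ --- i.e.\ the matrix \eqref{E:TC1} has full rank $4n-4m+2$ --- which is possible precisely when $4n-4m+2\le m$, equivalently $4n\le 5m-2$ (the same inequality makes the statement non-vacuous, being $\dim\mathcal C_1=5m-4n-2\ge 0$).

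The step I expect to cost the most care is the passage from the abstract bundle $E(P,Q)$ to the concrete matrix $A(\mathbf x)$: one must check that $E(P,Q)$ admits a trivialization near $\varphi_0$ in which $F_1$ becomes a product and in which $\Psi_f^{\mathrm{vert}}$ is, to first order at $0$, given by $A$ --- which is essentially what is already carried out around $\Pi_\C$ in the proof of Proposition~\ref{P:LocCoord}(b), together with the isomorphism $N(\mathcal S_1,\mathcal M)\cong K_\Phi$ of Lemma~\ref{L:N(S_1,M)}. Everything after that is the routine determinantal computation indicated above, which is why the proof is only sketched.
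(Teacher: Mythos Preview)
Your proposal is correct and is exactly the argument the paper has in mind: the paper's ``proof'' is the single sentence ``We skip the proof as it follows the same ideas as those of Proposition~\ref{P:LocCoord},'' and your sketch carries out precisely that analogy---reducing transversality of $\Psi_f$ to $F_1(P,Q)$ to a fibrewise normal-space computation, identifying the vertical part of $\Psi_f$ with the matrix $A(\mathbf x)=\Pi_\C(\mathbf x)$ from the proof of Proposition~\ref{P:LocCoord}(b), and reading off the rank condition on $d\mathscr C(0)$ via the minors that cut out $\mathcal C_1$ in Lemma~\ref{L:C1}. Your caveat about the trivialization of $E(P,Q)$ is well placed, but as you note, the needed identifications are already supplied by Lemma~\ref{L:N(S_1,M)} and the computation of $\Pi_\C$ in Proposition~\ref{P:LocCoord}.
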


\section{A perturbation result for totally real points}\label{S:pert} In this section, we give a refinement of a perturbation 
result due to Arosio and Wold~\cite{ArWo19}. Our result essentially yields that, if the set of CR singularities of a 
compact $m$-submanifold $M\subset\Cn$ ($m<n$) is already polynomially convex, then the manifold can be perturbed 
so that it is itself polynomially convex.

We first recall two lemmas from \cite{ArWo19}.

\begin{lemma}[{\cite[Lemma~3.1]{ArWo19}}]\label{L:1} 
Let $\M$ be a compact $\calC^\infty$ manifold (possibly with boundary) of dimension $m<n$ and let 
$f: \M\hookrightarrow\Cn$ be a totally real $\calC^\infty$ embedding. Let $K\subset\Cn$ be a 
polynomially convex compact  set, and $U$ be a neighborhood of $K$. Then, for any $k\geq 1$ and $\delta>0$, 
there exists a totally real $\calC^\infty$-smooth embedding $f_\delta: \M\rightarrow\Cn$ such that 
\begin{enumerate}
    \item  $||f_\delta-f||_{\calC^k(\M)}<\delta$,
    \item $f_\delta=f$ in a neighborhood of $f^{-1}(K)$,
    \item $\widehat{K\cup f_\delta(\M)}\subset U\cup f_\delta(\M)$.
\end{enumerate}
\end{lemma}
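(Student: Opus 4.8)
\smallskip
\noindent\emph{Proof idea.} The plan is a ``brick-by-brick'' induction: we absorb one small piece of $M=f(\M)$ at a time into a growing polynomially convex compact set that starts out near $K$, perturbing each piece (relative to the part already glued and relative to a neighbourhood of $f^{-1}(K)$) so that the union stays polynomially convex. To set up, since $K$ is polynomially convex, interpose a polynomial polyhedron $\widetilde K$ with $K\subset\operatorname{int}\widetilde K\subset\widetilde K\subset U$. Choose a compact neighbourhood $\mathcal N$ of $f^{-1}(K)$ in $\M$ so small that $f(\mathcal N)\subset\operatorname{int}\widetilde K$, and a smaller one $\mathcal N'$ with $f^{-1}(K)\subset\operatorname{int}\mathcal N'\subset\mathcal N'\subset\operatorname{int}\mathcal N$; we keep $f_\delta=f$ on $\operatorname{int}\mathcal N'$ throughout, which gives~(2) at once. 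Cover $\M\setminus\operatorname{int}\mathcal N'$ by finitely many compact coordinate cubes $\mathcal Q_1,\dots,\mathcal Q_N$, ordered so that those meeting $\mathcal N$ come first, and so fine that each $f(\mathcal Q_i)$ lies in a ball $B_i\subset\Cn$ on which a holomorphic change of coordinates carries $f(\mathcal Q_i)$ onto a small nearly flat piece of the totally real plane $\R^m\times\{0\}$ (possible because $f$ is a totally real embedding). Since the cubes meeting $\mathcal N$, say $\mathcal Q_1,\dots,\mathcal Q_s$, have $f$-image in $\operatorname{int}\widetilde K$, the set $X_s:=\widehat{K\cup f(\mathcal Q_1\cup\dots\cup\mathcal Q_s)}$ is contained in $\widehat{\widetilde K}=\widetilde K\subset U$ by monotonicity of the hull, is polynomially convex, and needs no perturbation.

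Now the induction. Suppose for some $j\ge s$ we have a $\calC^\infty$ map $f_j$ agreeing with $f$ off $\mathcal Q_{s+1}\cup\dots\cup\mathcal Q_j$ and on $\operatorname{int}\mathcal N'$, with $\|f_j-f\|_{\calC^k(\M)}$ small, that is a totally real embedding and for which $X_j:=X_s\cup f_j(\mathcal Q_{s+1}\cup\dots\cup\mathcal Q_j)$ is polynomially convex; note $X_j\subseteq U\cup f_j(\M)$. To pass to $j+1$, work in the coordinates on $B_{j+1}$: the cell $Y=f_j(\mathcal Q_{j+1})$ is $\calC^k$-close to a flat totally real $m$-cell, and $X_j\cap Y$ is a union $G$ of faces of $\mathcal Q_{j+1}$ shared with earlier cubes, which (after shrinking cubes) is polynomially convex. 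Using the positive codimension $m<n$ --- i.e.\ the $n-m$ complex normal directions along $\R^m\times\{0\}$ in which the cell may be wiggled --- perturb $f_j$ only on $\operatorname{int}\mathcal Q_{j+1}$, supported away from $G$ and from $\mathcal N'$, to a totally real embedding $f_{j+1}$ whose cell $Y'=f_{j+1}(\mathcal Q_{j+1})$ is in general position relative to $X_j$ in the following sense: there is a holomorphic polynomial $P$ with $P(G)=\{0\}$, with $P(X_j)$ and $P(Y')$ lying in two closed sectors of $\C$ meeting only at the origin, and $P^{-1}(0)\cap(X_j\cup Y')=G$. Kallin's lemma then gives that $X_{j+1}:=X_j\cup Y'$ is polynomially convex; since $f_{j+1}=f_j$ off $\mathcal Q_{j+1}$ we get $X_{j+1}\subseteq U\cup f_{j+1}(\M)$, and the perturbation can be taken as $\calC^k$-small as desired.

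After the $N-s$ steps, put $f_\delta=f_N$: this is a totally real $\calC^\infty$ embedding, $f_\delta=f$ near $f^{-1}(K)$, and $\|f_\delta-f\|_{\calC^k(\M)}<\delta$ provided the (finitely many) perturbations were chosen small enough. Moreover $K\cup f_\delta(\M)\subseteq X_N$, because $K\subseteq X_s\subseteq X_N$, $f_\delta(\operatorname{int}\mathcal N')=f(\operatorname{int}\mathcal N')\subseteq X_s$, and $f_\delta(\M\setminus\operatorname{int}\mathcal N')\subseteq f_N(\mathcal Q_1\cup\dots\cup\mathcal Q_N)\subseteq X_N$; hence
\[
\widehat{K\cup f_\delta(\M)}\ \subseteq\ \widehat{X_N}\ =\ X_N\ \subseteq\ U\cup f_\delta(\M),
\]
which is~(3).

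The main obstacle is the general-position step in the induction: producing, by an arbitrarily small perturbation of the cell supported away from $G$ and from $f^{-1}(K)$, a separating polynomial $P$ for the pair $(X_j,Y')$ with $P|_G\equiv 0$. This is precisely where one exploits $m<n$, pushing $\operatorname{int}\mathcal Q_{j+1}$ off the ``bad'' locus $\{P=0\}$ into the normal directions, and where one uses both that $X_j$ is already polynomially convex (so that, by Oka--Weil, a function holomorphic near $X_j$ is approximated by polynomials on $X_j$) and Rossi's local maximum modulus principle (to control the hull near $G$). One must also arrange, once and for all, that the ordering of the cubes makes every gluing face $G$ genuinely polynomially convex; granted the local gluing lemma, the remaining induction and the $\calC^k$-estimate are routine bookkeeping.
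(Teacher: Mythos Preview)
The paper does not give a proof of this lemma: it is simply quoted from \cite[Lemma~3.1]{ArWo19} and used as a black box. So there is no ``paper's proof'' to compare against here.

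Your sketch follows the correct overall strategy of the Arosio--Wold argument (which in turn builds on L\o w--Wold~\cite{LW} and Forstneri\v c): cover the totally real manifold by small cells and inductively glue them onto a growing polynomially convex compact, perturbing each new cell in the normal directions (available because $m<n$) so that the union stays polynomially convex. You correctly identify the crux as the general-position/separating-polynomial step. However, your description of that step is not quite right as stated. The claim that $X_j\cap Y$ is exactly the union $G$ of shared faces is too optimistic: $X_j$ is a compact in $\Cn$ (it contains $X_s\supset K$), not the image of previously processed cubes alone, so $X_j$ may meet $Y$ away from those faces, and you have no a priori control on how the hull of the earlier steps sits near the new cell. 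Likewise, producing a single polynomial $P$ with $P(G)=\{0\}$ and with $P(X_j)$, $P(Y')$ in two sectors is a strong requirement that does not follow from the ingredients you list; the actual argument in \cite{ArWo19,LW} works more locally, using that a sufficiently small totally real cell (in suitable coordinates a graph over $\R^m\subset\C^m\subset\Cn$) can be perturbed to be polynomially convex, together with a local hull argument near the interface rather than a global Kallin separation at each step. Your final paragraph candidly flags this as the ``main obstacle,'' which is fair, but as written the inductive step is not justified.
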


\begin{lemma}[\cite{ArWo19}]\label{L:2}
Let $\M$, $f$, $K$ and $U$ be as in Lemma~\ref{L:1}, and suppose that 
$\widehat{K\cup f(\M)}\subset U\cup f(\M)$. 
Then, there exists a constant $c=c(\M,f,K,U)>0$ such that for any smooth $\tilde f : \M \to \C^n$
\begin{equation}\label{e.cnb}
||f-\tilde f||_{\calC^1(\M)}<c \ \Longrightarrow \ \widehat{K\cup \tilde f (\M)}\subset U\cup \tilde f (\M).
\end{equation}
\end{lemma}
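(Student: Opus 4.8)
The plan is to argue by contradiction and compactness, and then to reduce the global statement to a \emph{local} one at a point of the totally real manifold $f(\M)$, where the polynomial hull is controlled by the hypothesis.

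\emph{Step 1: localization of a putative counterexample.} Suppose no such constant $c$ exists. Then there are smooth maps $f_j\colon\M\to\Cn$ with $\|f_j-f\|_{\calC^1(\M)}\to 0$ and points $z_j\in\widehat{K\cup f_j(\M)}\setminus\bigl(U\cup f_j(\M)\bigr)$. Since $\|f_j-f\|_{\calC^0}\to 0$, the sets $X_j:=K\cup f_j(\M)$ lie in a fixed ball, hence so do their hulls, and we may pass to a subsequence with $z_j\to z_*$. For every holomorphic polynomial $P$ one has $|P(z_j)|\le\max_{X_j}|P|$, and $\max_{X_j}|P|\to\max_{X_0}|P|$ because $X_j\to X_0:=K\cup f(\M)$ in the Hausdorff metric; letting $j\to\infty$ gives $|P(z_*)|\le\max_{X_0}|P|$, i.e.\ $z_*\in\widehat{X_0}\subset U\cup f(\M)$ by hypothesis. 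If $z_*\in U$, then $z_j\in U$ for large $j$ (as $U$ is open), contradicting $z_j\notin U$; hence $z_*\in f(\M)\setminus U$. Picking $p_0\in\M$ with $f(p_0)=z_*$ also gives $\dist(z_j,f_j(\M))\le|z_j-f_j(p_0)|\to 0$, so the $z_j$ crowd onto a point of the totally real manifold $f(\M)$.

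\emph{Step 2: reduction to a local statement.} It suffices to find $r>0$ and $j_0$ such that
\[
\widehat{K\cup f_j(\M)}\cap B(z_*,r)\subset f_j(\M)\qquad(j\ge j_0),
\]
for this contradicts $z_j\in B(z_*,r)\setminus f_j(\M)$ once $j$ is large. Since $z_*\notin K$, we may shrink $r$ so that $\overline{B(z_*,2r)}\cap K=\emptyset$; then inside $B(z_*,2r)$ the set $X_j$ is just a piece of the totally real manifold $f_j(\M)$. The content of the hypothesis we need is precisely that $z_*$ is \emph{not} a point at which the unperturbed hull accumulates off the manifold: $\widehat{X_0}\cap B(z_*,r)$ contains no point outside $U\cup f(\M)$.

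\emph{Step 3: the local argument.} Since $f$ is a totally real $\calC^\infty$ embedding and $\|f_j-f\|_{\calC^1}\to 0$, the manifold $f_j(\M)$ is totally real near $z_*$ uniformly in $j$; by the H\"ormander--Wermer construction (applied, if necessary, to a $\calC^\infty$-smoothing of $f_j$) there is, for $r$ small, a nonnegative strictly plurisubharmonic function $\rho_j$ on $B(z_*,2r)$ with $\{\rho_j=0\}=f_j(\M)\cap B(z_*,2r)$, where $r$ and a lower bound for the complex Hessian of $\rho_j$ depend only on the $\calC^1$-jet of the embedding and hence are uniform in $j$. One then uses that $K$ is polynomially convex and disjoint from $\overline{B(z_*,2r)}$, together with the hypothesis $\widehat{X_0}\subset U\cup f(\M)$ (propagated to $f_j$ via the same $\calC^1$-stability), to glue $c\rho_j$ (with $c>0$ small and fixed) to a plurisubharmonic function $v_j$ on all of $\Cn$ with $v_j\le 0$ on $K\cup f_j(\M)$ and $v_j\ge c\,\rho_j$ on $B(z_*,r)$. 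Since the polynomial hull of a compact subset of $\Cn$ coincides with its plurisubharmonic hull, any $w\in\widehat{K\cup f_j(\M)}$ satisfies $v_j(w)\le\max_{X_j}v_j\le 0$; if moreover $w\in B(z_*,r)$, then $0\ge v_j(w)\ge c\,\rho_j(w)\ge 0$, forcing $\rho_j(w)=0$, i.e.\ $w\in f_j(\M)$. This is the required inclusion, completing the contradiction.

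\emph{Expected main obstacle.} Steps 1--2 and the H\"ormander--Wermer input are routine; the crux is the gluing in Step 3 --- producing a globally plurisubharmonic $v_j$ on $\Cn$ that restricts near $z_*$ to (a minorant of) the local defining function of $f_j(\M)$ yet remains $\le 0$ on the polynomially convex set $K$ and on \emph{all} of $f_j(\M)$, with every constant uniform as $f_j\to f$. Equivalently, one must rule out that the \emph{global} hull $\widehat{X_j}$ develops a thin sheet accumulating on $f_j(\M)$ at $z_*$, even though the unperturbed hull does not; this is exactly where the polynomial convexity of $K$ and the $\calC^1$- (rather than merely $\calC^0$-) smallness of the perturbation are genuinely used.
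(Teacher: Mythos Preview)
The paper does not give an independent proof of this lemma; it simply notes that the statement is a restatement of Lemma~3.3 in Arosio--Wold~\cite{ArWo19} (formulated there for the essential hull $h(X)=\overline{\widehat X\setminus X}$) and refers the reader to that source. Your proposal therefore attempts substantially more than the paper does, so there is no like-for-like comparison to make.

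Your Steps~1 and~2 are correct. The compactness argument in Step~1 is standard and cleanly shows that a putative sequence of counterexamples must accumulate at a point $z_*\in f(\M)\setminus U$; the reduction in Step~2 to a local hull statement in a ball $B(z_*,r)$ disjoint from $K$ is the right move.

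Step~3 is where the proposal has a real gap, which you yourself flag. Two concrete points. First, the parenthetical ``propagated to $f_j$ via the same $\calC^1$-stability'' reads as circular: the $\calC^1$-stability of the hull inclusion is exactly what is being proved, so it cannot be invoked as an input to the construction of $v_j$. Second, and more fundamentally, the existence of a global plurisubharmonic $v_j$ on $\Cn$ with $v_j\le 0$ on \emph{all} of $K\cup f_j(\M)$ and $v_j\ge c\rho_j$ on $B(z_*,r)$ is not a consequence of the local H\"ormander--Wermer data; it is essentially equivalent to the hull statement you want. The contradiction framework does not sidestep this construction. The way \cite{ArWo19} proceeds is direct rather than by contradiction: one patches a global nonnegative plurisubharmonic exhaustion for the polynomially convex set $K$ with the squared-distance function to the totally real $f(\M)$, obtaining a single plurisubharmonic $\Phi$ on $\Cn$ that is nonpositive on $K\cup f(\M)$ and, outside $U$, comparable to the squared distance to $f(\M)$; the hypothesis $\widehat{K\cup f(\M)}\subset U\cup f(\M)$ is what makes this patching consistent. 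Because $\Phi$ vanishes to second order along the totally real $f(\M)$, its values on $\tilde f(\M)$ are controlled by $\|f-\tilde f\|_{\calC^1}$, and a fixed small adjustment of $\Phi$ then serves for all nearby $\tilde f$. Your outline identifies the right obstacle but does not supply this construction; as it stands, Step~3 is a restatement of the difficulty rather than a resolution of it.
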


\begin{proof}
This is essentially a restatement of Lemma~3.3 in~\cite{ArWo19}, which
under the assumptions of the lemma gives $c>0$ such that 
$||f-\tilde f||_{\calC^1(\M)}<c$ implies $h({K\cup \tilde f(\M)})\subset U$. Here 
$h(X) = \overline{\widehat X \setminus X}$ is the essential hull. From this Lemma~\ref{L:2} follows.
\end{proof}


In this section we prove the following perturbation result that will be used in the proof of our main theorem. 

\begin{prop}\label{l.pertlemma}
Let $\M$ be an $m$-dimensional compact $\calC^\infty$ manifold (possibly with boundary). Let $n>m$ and 
$f: \M\hookrightarrow\Cn$ be a $\calC^\infty$-smooth embedding. Suppose $C\subset f(\M)$ is a polynomially convex compact set,
and $f(\M)\setminus C$ is a totally real manifold.
Then, for any $k\geq 1$ and any $\eps>0$, there is a $\calC^\infty$-smooth embedding $f_\eps: \M\rightarrow\Cn$, totally real on 
$M \setminus f^{-1}(C)$, such that 
    \begin{itemize}
        \item [$(a)$] $||f_\eps-f||_{\calC^k(\M)}<\eps$,
        \item [$(b)$] $f_\eps=f$ on $f^{-1}(C)$, and
        \item [$(c)$] $f_\eps(\M)$ is polynomially convex in $\Cn$.
    \end{itemize}
\end{prop}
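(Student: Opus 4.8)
The idea is to combine the two lemmas from \cite{ArWo19} that are quoted above with a standard exhaustion/compactness argument. First I would use Lemma~\ref{L:1} to perturb $f$ near the totally real part: since $C$ is polynomially convex, I take a shrinking family of neighborhoods $U_j \downarrow C$ and, applying Lemma~\ref{L:1} with $K=C$ and $U=U_j$, I get a totally real embedding $f_j$ of $\M$ (into $\Cn$), agreeing with $f$ near $f^{-1}(C)$, with $\|f_j-f\|_{\calC^k}$ as small as I like, and such that $\widehat{C\cup f_j(\M)}\subset U_j \cup f_j(\M)$. The point is that the last inclusion, being an open condition by Lemma~\ref{L:2}, is \emph{stable} under further $\calC^1$-small perturbations of $f_j$.

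\medskip

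\textbf{Making the manifold itself polynomially convex.} Fix such an $f_1$ (say), with $\|f_1-f\|_{\calC^k}<\eps/2$, equal to $f$ near $f^{-1}(C)$, and with $\widehat{C\cup f_1(\M)}\subset U_1 \cup f_1(\M)$. Write $M_1 = f_1(\M)$. Now $M_1\setminus C$ is a totally real, relatively closed submanifold of $\Cn\setminus C$, and I want to clean up $M_1$ away from $C$. Here I would invoke the classical perturbation theory for totally real manifolds off of a polynomially convex set --- precisely the content of Lemma~\ref{L:1} applied again, but now using it to push the hull of $M_1$ into any prescribed neighborhood of $M_1$ itself. Concretely: by a second application of Lemma~\ref{L:1} (with $\M$ replaced by $\M$, $K$ by a polynomially convex compact exhausting piece, or more directly by the Arosio--Wold machinery which shows a totally real manifold can be perturbed so that its hull adds no interior), one perturbs $f_1$, keeping it fixed near $f^{-1}(C)$ and within $\calC^k$-distance $\eps/2$, to an embedding $f_\eps$ with $\widehat{f_\eps(\M)}\subset$ an arbitrarily small neighborhood of $f_\eps(\M)$; combined with the fact that $C\cup f_\eps(\M)$ has hull inside $U_1\cup f_\eps(\M)$ (stable by Lemma~\ref{L:2}), and that $f_\eps(\M)$ has no complex structure off $C$ while $C$ is polynomially convex, one concludes $\widehat{f_\eps(\M)} = f_\eps(\M)$. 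Properties $(a)$, $(b)$ are built in by construction, and totally realness on $M\setminus f^{-1}(C)$ is preserved because total realness is an open condition and all perturbations are $\calC^1$-small.

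\medskip

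\textbf{The main obstacle.} The delicate point is the \emph{second} perturbation --- showing that a totally real manifold (relatively closed in the complement of the polynomially convex set $C$) can be made polynomially convex by a small perturbation supported off $C$, while simultaneously keeping the hull of $C\cup M$ controlled. Lemma~\ref{L:1} as stated perturbs off the preimage of a given polynomially convex $K$, but here $C$ is not separated from the rest of $M$ by an a priori polynomially convex neighborhood of $C$ that contains none of $M\setminus C$; so one must argue that $C\cup(\text{a small collar of } C \text{ in } M)$ is still polynomially convex, or pass to a nested sequence of polynomially convex sets $C = K_0 \subset K_1 \subset \cdots$ with $\bigcup K_j$ a neighborhood of $C$ in $M$, perturbing on $M\setminus K_j$ at stage $j$ with geometrically decreasing error, and taking a limit. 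Managing the convergence of this inductive scheme --- ensuring the errors sum to less than $\eps$, that each stage's hull-control inclusion persists (via the stability constant $c$ from Lemma~\ref{L:2}, which must be chosen \emph{before} the next perturbation), and that the limiting map is still a smooth embedding totally real off $f^{-1}(C)$ --- is where the real work lies; everything else is bookkeeping.
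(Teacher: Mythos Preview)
Your overall strategy---iterate Lemma~\ref{L:1} over a nested family of neighborhoods of $C$, use the stability constant from Lemma~\ref{L:2} at each stage to protect the hull inclusions already achieved, choose geometrically decreasing errors, and pass to a $\calC^\infty$ limit---is exactly the paper's approach. You have also correctly located the real work: managing the inductive scheme so that the limit is smooth and all the hull inclusions survive.

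Two points where your write-up drifts from what actually works. First, in your opening paragraph you apply Lemma~\ref{L:1} with $K=C$ to the embedding $f:\M\to\Cn$; but the lemma requires a \emph{totally real} embedding, and $f$ is not totally real on $f^{-1}(C)$. The paper handles this by applying Lemma~\ref{L:1} not to $\M$ but to the compact manifold-with-boundary $\M_j=\M\setminus f^{-1}(U_{j+1})$, on which the current embedding \emph{is} totally real; the role of $K$ is then played by $\overline{U_j}$, not by $C$ itself. Second, in your final paragraph the nested sets go the wrong way: you take $C=K_0\subset K_1\subset\cdots$ increasing to a neighborhood of $C$ in $M$ and perturb on $M\setminus K_j$. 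With that orientation the perturbed region shrinks and the hull is never forced into arbitrarily small neighborhoods of $C$. The paper instead takes a \emph{decreasing} basis $U_j\searrow C$ of polynomially convex neighborhoods in $\Cn$ (sublevel sets of a nonnegative plurisubharmonic exhaustion vanishing exactly on $C$), so that $\M_j$ \emph{increases} and the $j$-th step yields $\widehat{\overline{U_j}\cup f_j(\M_j)}\subset U_{j-1}\cup f_j(\M_j)$; in the limit $\bigcap_j U_j=C\subset f_\eps(\M)$ forces $\widehat{f_\eps(\M)}=f_\eps(\M)$. Once you reverse the direction and work on $\M_j$ rather than $\M$, your sketch matches the paper's proof essentially line for line.
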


\begin{proof} 
We set $M = f(\M)$.
Since $C$ is polynomially convex, there exists (see~\cite{St}) a smooth nonnegative plurisubharmonic function
$\phi(z)$ on $\C^n$ such that $\{\phi^{-1}(0)\}=C$ and $\phi$ is strictly plurisubharmonic on $\C^n \setminus C$. 
It follows that for any $b>0$, the sublevel set $\{\phi \le b\}$  is a compact polynomially convex neighbourhood of $C$
(we may restrict consideration to a connected component of $\{\phi \le b\}$ containing $C$ if necessary).
By applying Sard's theorem to the restriction of $\phi$ to $M$, for almost all $b>0$, the set $\{\phi=b \}\cap M$ is a 
closed smooth submanifold of $M$ of dimension $m-1$. Thus, by choosing an appropriate sequence $b_j \searrow  0$, 
we may construction 
an open neighbourhood basis $\{U_j=\{\phi < b_j\}\}_{j\in\N}$ in $\Cn$ such that for each $j\in\N$ we have
\begin{itemize}
    \item[($\imath$)] $U_{j+1}\Subset U_{j}$,
        \item[($\imath\imath$)] $\overline {U_j}$ is polynomially convex in $\Cn$,
    \item[($\imath\imath\imath$)] $M\setminus U_j$ is a smooth totally real manifold with boundary.
\end{itemize}
Let $\eps>0$ and $k \ge 1$ be given. The idea of the proof is the following: using Lemma~\ref{L:1}, for each $j \in \mathbb N$ 
construct a ${\calC^{k+j}(\M)}$-small perturbation $f_j$ of $f$ such that the polynomially convex hull of $M_j=f_j(\M)$ is contained in 
$U_j \cup M_j$. Further, in the inductive procedure, the map $f_{j+1}$ is chosen from a neighbourhood of $f_j$ in the space of
embeddings that satisfy~\eqref{e.cnb} with $f=f_j$ and $\tilde f = f_{j+1}$. The required map $f_\eps$ is then obtained as a pointwise
limit of $\{f_j\}$. 

Set $\M_j=\M\setminus f^{-1}\left(U_{j+1}\right)$, $K_j =\overline{U_j}$.
By Lemma~\ref{L:1} applied to $\M=\M_1$, $f=f|_{\M_1}$, $K=K_1$ and choosing some $U_0$ that satisfies the assumption on $U$, 
there exists a totally real $\calC^\infty$ embedding $f_1: \M_1\rightarrow\Cn$ such that 
\begin{enumerate}
    \item [$(i_1)$] $f_1=f$ on $f^{-1}(K_1)$,
    \smallskip
    \item [$(ii_1)$] $||f_1-f||_{\calC^k(\M_1)} = ||f_1-f||_{\calC^k(\M)}<\eps/4$,
    \smallskip
    \item [$(iii_1)$] $\widehat{K_1\cup f_1(\M_1)}\subset U_0\cup f_1(\M_1)$.
\end{enumerate}
Now, by Lemma~\ref{L:2} there exists a constant $c_1 = c (\M_1, f_1,K_1,U_0)$ such that for any smooth $\tilde f$ we have
\begin{equation}\label{e.cnb1}
||f_1-\tilde f||_{\calC^1(\M)}<c_1 \ \ \Longrightarrow \ \ 
\widehat{K_1\cup \tilde f(\M_1)} \subset U_0 \cup \tilde f(\M_1).
\end{equation}
Chose $\eps_2 = \min\{\eps_1/2, c_1/2\}$, and after shrinking $\eps_2$ further we may also assume that 
  $$
  K_2+\mathbb B(0,\eps_2)\subset U_{1}.
 $$
 Again, by Lemma~\ref{L:1} there exists $f_2: \M \to \C^n$ such that 
 \begin{enumerate}
    \item [$(i_2)$]  $f_1=f_2$ on $f^{-1}(K_2)$,
    \smallskip
    \item [$(ii_2)$] $||f_2-f_1||_{\calC^{k+1}(\M_2)} = ||f_2-f_1||_{\calC^{k+1}(\M)} <\eps_2$,
    \smallskip
    \item [$(iii_2)$] $\widehat{K_2\cup f_2(\M_2)}\subset U_1\cup f_2(\M_2)$.
\end{enumerate}
Note that by construction, the map $f_2$ satisfies~\eqref{e.cnb1}, and therefore, 
$ \widehat{K_1\cup f_2(\M_1)} \subset U_0 \cup f_2(\M_1)$.

Suppose now that there exist $\eps_r>0$ and maps $f_r: \M \to \C^n$, $r = 1, 2, \dots, j-1$, such that  for $r>1$ we have
\begin{itemize}
    \item [$(0_r)$] $\eps_r <\min\{\eps_{r-1}/2, c_{r-1}/2\}$ and   ${K_r}+\mathbb B(0,\eps_r)\subset U_{r-1}$. 
    \smallskip
    \item [$(ii_r)$]  $f_r=f_{r-1}$ on $f^{-1}\left({K_r}\right)$,
    \smallskip
    \item [$(ii_r)$] $||f_r-f_{r-1}||_{\calC^{k+r}(\M_r)}<\eps_r$,
    \smallskip
    \item [$(iii_r)$] $\widehat{{K_r}\cup f_r(\M_r)}\subset U_{r-1}\cup f_r(\M_r)$.
\end{itemize}
Then by Lemma~\ref{L:1}, there exist $\eps_{j}$ and an embedding $f_{j}: \M \to \mathbb C^n$ such that
\begin{enumerate}
    \item [$(0_{j})$] $\eps_{j} <\min\{\eps_{j-1}/2, c_{j-1}/2\}$ and   ${K_{j}}+\mathbb B(0,\eps_j)\subset U_{j-1}$. 
    \smallskip
    \item [$(i_j)$] $f_{j-1}=f_{j}$ on ${f}^{-1}\left({K_{j}}\right)$,
    \smallskip
    \item  [$(ii_j)$] $||f_{j-1}-f_j||_{\calC^{k+j}(\M_{j})} = ||f_{j-1}-f_j||_{\calC^{k+j}(\M)} <\eps_{j}$, 
    \smallskip
    \item [$(iii_j)$] $\widehat{{K_{j}}\cup f_{j}(\M_{j})}\subset U_{j-1}\cup f_{j}(\M_{j})$.
\end{enumerate}
Note that for any $1<r<j$ we have $||f _r - f_j ||_{\calC^1(\M)} < c_r$, and therefore, by Lemma~\ref{L:2}, 
\begin{equation}
\widehat{K_r\cup f_j(\M)}\subset U_{r-1} \cup f_j (\M) .
\end{equation}
 
 Consider now the limit of the sequence $\{f_j\}$. We claim that (1) the limit exists and is a smooth function, call it $f_\eps$, 
 and  (2) $f_\eps$ satisfies conditions (a)-(c) of the lemma.
 
 (1) The sequence $\{f_j\}$ is Cauchy in $\calC^{\tilde\eta}(\M)$ for any $\tilde\eta\ge 1$. To prove that assume, 
 without loss of generality, that $\tilde\eta = k + \eta$, $\eta>0$. Given any $\delta>0$, choose $N>0$
 such that  $N > \eta$ and $||f_{N-1}-f_N||_{\calC^{k +N}(\M)}<\eps_N <\delta$ (condition $(ii_N)$). Let $\nu, \mu > N$ with $\nu > \mu$. Then
 $$
 ||f_\nu - f_\mu||_{\calC^{k+\eta}(\M)} \le
 \sum_{l=0}^{\nu-\mu-1} ||f_{\mu+l} - f_{\mu+l+1}||_{\calC^\eta(\M_{k+\nu+l+1})} \le 
 \sum_{l=0}^{\nu-\mu-1} \frac{\eps_{\mu}}{2^l} \le
  \eps_{\mu} \sum_{l=0}^{\infty} \frac{1}{2^{l+1}} = \eps_{\mu} <\eps_N < \delta,
 $$
where we used the estimate
 $$
||f_{\mu+l} - f_{\mu+l+1}||_{\calC^{k+\eta}(\M)} \le
||f_{\mu+l} - f_{\mu+l+1}||_{\calC^{k+\mu+l+1}(\M)} \le
\eps_{\mu+l+1} < \frac{\eps_{\mu+l}}{2} \le \dots \le \frac{\eps_{\mu}}{2^{l+1}}.
$$
This shows that $\{f_j\}$ is a Cauchy sequence and therefore it converges in any $\calC^{\tilde\eta}(\M)$. Thus,
$f_\eps \in \calC^{\infty}(\M)$.

(2)  
Let $\nu\in \mathbb N$ be such that $||f _\infty -f_\nu||_{\calC^k(\M)} < \eps /2$. Then
$$
\begin{aligned}
||f_\infty -f||_{\calC^k(\M)} \le ||f_\infty - f_\nu||_{\calC^k} + ||f_\nu - f||_{\calC^k} \le \\
\eps/2 + \sum_{j=2}^\nu ||f_j - f_{j-1}||_{\calC^k} + ||f_1 - f||_{\calC^k}\le \eps/2 + \sum_{j=2}^\nu \eps / 2^j < \eps.
\end{aligned}
$$
This verifies condition (a). Conditions (b) holds by construction. To prove that $f_\eps(\M)$ is polynomially convex
we argue as follows. Note that by Lemma~\ref{L:2} any perturbation of $f$ in a neighbourhood of $f$ controlled by $c_1$ 
satisfies ~\eqref{e.cnb1}.  By the choice of $\eps_j$ in condition $(0_j)$ for all $j$ we conclude that 
$$
\widehat{K_1\cup f_\eps(\M)} \subset U_0 \cup f_\eps(\M) .
$$
The same argument can used to show that $\widehat{K_j \cup f_\eps(\M) }\subset U_{j-1} \cup f_\eps(\M)$ for any $j>0$, 
this again follows from the choice of $\eps_j$ in condition $(0_j)$. Since $U_j$ form a neighbourhood basis of $C$, we 
conclude that $\widehat{f_\eps(\M)} = f_\eps(\M)$. This completes the proof of the proposition.
\end{proof}

\section{A perturbation result for Coffman nondegenerate points}\label{S:C2}
In this section, we show that if an embedded $M\subset\Cn$ (in general position) contains only Coffman nondegenerate points outside of a polynomially convex compact set in $\Cn$, then it can be perturbed and made polynomially convex.   

Throughout this section, $\mathcal M$ is a closed smooth $m$-dimensional real manifold, endowed with a Riemannian metric, so that there are well-defined $\cont^k(\M;\Cn)$- and $\cont^k(\M;\M)$-norms for all $k\in\N$. Let $\cont^\infty_{\operatorname{emb}}(\M;\Cn)$ denote the space of smooth embeddings of $\M$ into $\Cn$, and $\cont^\infty_{\operatorname{diff}}(\M;\M)$ denote the space of smooth diffeomorphisms of $\M$. Given $p\in\M$ and $r>0$, $U_p(r)$ denotes the $r$-neighbourhood of $p$ in the metric fixed above. We use the notation as defined in Section~\ref{S:nondeg}.

First, we state a lemma that allows us to travel between the CR singularity sets of two nearby embeddings. 

\begin{lemma}\label{L:sing} 
Assume that $n\leq \frac{3}{2}m-1$. Let $f:\mathcal M\hookrightarrow\Cn$ be a smooth embedding such that $\mathcal S^f=\mathcal S_1^f$, and $\Phi_f$ is transverse to $\text{Hom}_\C^1(T^\C \mathcal M,\mathcal M\times\Cn)$. Fix an integer $k\geq 3$. We endow $\cont^\infty_{\operatorname{emb}}(\M;\Cn)$ with the $\cont^k$-norm, and $\cont^\infty_{\operatorname{diff}}(\M;\M)$ with the $\cont^{k-1}$-norm. Then, there exists a neighborhood $\mathfrak U\subset \cont^\infty_{\operatorname{emb}}(\M;\Cn)$ of $f$, and a continuous map $\theta:\mathfrak U\times\mathfrak U\rightarrow\cont^\infty_{\operatorname{diff}}(\M;\M)$ such that $\theta^g_h=\theta(g,h)$ satisfy the following conditions
	\begin{itemize}
\item [(a)] $\theta^g_h(\calS^{g})=\calS^{h}$,
\item [(b)] $\theta^g_h=\operatorname{id_\M}$, if $\calS^g=\calS^h$,
\item [(c)] $\theta^{g}_{h}=\theta^{p}_h\circ\theta^{g}_{p}$, for $g,h,p\in\mathfrak U$. In particular, $\theta^{g}_{h}=\theta^{p}_h$, if $\calS^g=\calS^p$. 
\end{itemize}  
there is a smooth diffeomorphism $\theta^f_g:\M\rightarrow\M$ such that
\end{lemma}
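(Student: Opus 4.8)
The plan is to build the diffeomorphisms $\theta^g_h$ by exhibiting each CR-singular set $\calS^g$ as a level-regular section of a fixed tubular neighbourhood of $\calS^f$, and then moving one section to another by a fibre-preserving isotopy. First I would fix, once and for all, a tubular neighbourhood $\calT$ of $\calS = \calS^f$ in $\M$, realised as a disk subbundle of the normal bundle $N(\calS,\M)$ with projection $\rho:\calT\to\calS$, together with a retraction. Since $\Phi_f$ is transverse to $\operatorname{Hom}^1_\C(P,Q)$ and $\calS^f=\calS_1^f$ is compact, transversality is an open condition in the $\cont^k$-topology ($k\ge 3$ suffices, because $\Phi_g$ depends on one derivative of $g$ and its transversality on a second); hence there is a neighbourhood $\mathfrak U\ni f$ in $\cont^\infty_{\operatorname{emb}}(\M;\Cn)$ such that for every $g\in\mathfrak U$, $\Phi_g$ is transverse to $\operatorname{Hom}^1_\C(P,Q)$, $\calS^g=\calS_1^g$ (no higher-order singularities appear, by Proposition~\ref{P:PertI} together with $n\le \tfrac32 m-1$), and $\calS^g$ is a compact submanifold contained in $\calT$ that meets each fibre $\rho^{-1}(p)$ transversally in a single point — i.e., $\calS^g$ is the image of a smooth section $\eta_g:\calS\to\calT$. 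Shrinking $\mathfrak U$ if necessary, $g\mapsto\eta_g$ is continuous from $\mathfrak U$ into $\cont^{k-1}(\calS;\calT)$: this is the implicit function theorem applied to the defining equation of $\calS^g$ (locally the vanishing of $\mathscr S$ as in Proposition~\ref{P:LocCoord}(a)), whose linearisation in the fibre directions is invertible by transversality.

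Next I would, for each ordered pair $(g,h)\in\mathfrak U\times\mathfrak U$, define a diffeomorphism of $\calT$ that carries $\eta_g(\calS)$ onto $\eta_h(\calS)$ fibrewise. The natural choice is the fibrewise affine interpolation in the vector-bundle structure: on $\rho^{-1}(p)$, translate by $\eta_h(p)-\eta_g(p)\in N_p(\calS,\M)$, suitably cut off near the boundary of $\calT$ using a fixed bump function $\chi$ equal to $1$ on a smaller tube $\calT'\supset \eta_g(\calS),\eta_h(\calS)$ (which exists after shrinking $\mathfrak U$) and $0$ near $\partial\calT$. Call this map $\Theta^g_h:\calT\to\calT$; extend it by the identity to all of $\M$ to get $\theta^g_h\in\cont^\infty_{\operatorname{diff}}(\M;\M)$. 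For $\mathfrak U$ small the cutoff perturbation is $\cont^{k-1}$-small so $\theta^g_h$ is a genuine diffeomorphism. By construction $\theta^g_h(\calS^g)=\calS^h$, giving (a); when $\calS^g=\calS^h$ we have $\eta_g=\eta_h$ and the translation is the identity, so $\theta^g_h=\operatorname{id}_\M$, giving (b). Continuity of $\theta:\mathfrak U\times\mathfrak U\to\cont^\infty_{\operatorname{diff}}(\M;\M)$ in the $\cont^{k-1}$-norm follows from continuity of $g\mapsto\eta_g$ and the smooth dependence of the interpolation formula on $\eta_g,\eta_h$.

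The cocycle identity (c) is the subtle point, and is the step I expect to be the main obstacle, since literal fibrewise affine interpolation does \emph{not} satisfy $\Theta^p_h\circ\Theta^g_p=\Theta^g_h$ once the cutoff is present (and even without cutoff, translations compose additively only in a single chart, so globally one must be careful). The clean fix is to replace the ad hoc interpolation by a construction that is manifestly functorial: choose a tube $\calT$ so small that each fibre $\rho^{-1}(p)$ carries a well-defined geodesic-normal coordinate, and define $\theta^g_h$ on $\calT'$ to be ``slide along the fibre from the point over $\eta_g(p)$ to the point over $\eta_h(p)$'' using a fixed family of fibrewise diffeomorphisms $\tau_{q_0,q_1}:\rho^{-1}(p)\to\rho^{-1}(p)$ depending smoothly on $(q_0,q_1)\in\rho^{-1}(p)\times\rho^{-1}(p)$ and satisfying $\tau_{q_1,q_2}\circ\tau_{q_0,q_1}=\tau_{q_0,q_2}$ and $\tau_{q,q}=\operatorname{id}$ — e.g. take $\tau_{q_0,q_1}$ to be the time-$1$ flow of the complete vector field whose flow at time $t$ sends $q_0$ to the point $(1-t)q_0+tq_1$ reparametrised to be constant-speed, or more simply conjugate the translation group of the fibre by a fixed diffeomorphism onto $\R^{m-r}$. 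Such a family restricts to the identity outside $\calT'$ automatically if we instead parametrise it as the time-$1$ map of a compactly-supported (in the tube) isotopy and use the group law of that isotopy; the cocycle relation then holds on $\calT'$ identically and on $\M\setminus\calT'$ trivially, yielding (c) globally, and the ``in particular'' statement $\theta^g_h=\theta^p_h$ when $\calS^g=\calS^p$ follows by combining (c) with (b). I would double-check continuity of $\theta$ once more for this refined construction, and note that $k\ge 3$ is exactly what is needed so that $\theta^g_h$ inherits $\cont^{k-1}$-control from the $\cont^{k}$-control on $g,h$.
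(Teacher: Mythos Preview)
Your approach is correct and is essentially what the paper has in mind, though the paper's own proof is a single sentence (``This follows from the transversality of $\Phi_f$ to $\operatorname{Hom}_\C^1(T^\C \mathcal M,\mathcal M\times\Cn)$ and the continuity of the map $f\mapsto \Phi_f$''). The tubular-neighbourhood picture you set up, with each $\calS^g$ realised as the image of a section $\eta_g:\calS^f\to\calT$, is exactly the framework used elsewhere in the paper (cf.\ item~(iii) in the proof of Proposition~\ref{P:PertII}).

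Your instinct that the cocycle identity~(c) is the only nontrivial point is right, and your diagnosis that cut-off fibrewise translations fail to compose correctly is accurate. However, the fix is simpler than the constructions you propose. Rather than trying to build a two-parameter family $\tau_{q_0,q_1}$ satisfying a cocycle on each fibre, factor everything through the fixed basepoint $\calS^f$: for each $g\in\mathfrak U$ choose \emph{once} a diffeomorphism $\psi_g\in\cont^\infty_{\operatorname{diff}}(\M;\M)$ depending only on the section $\eta_g$ (and continuously on it in $\cont^{k-1}$), with $\psi_g(\calS^f)=\calS^g$ and $\psi_f=\operatorname{id}_\M$. Any of your constructions (e.g.\ the time-$1$ flow of a compactly supported vector field in $\calT$ determined by $\eta_g$) produces such a $\psi_g$. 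Then set
\[
\theta^g_h \;:=\; \psi_h\circ\psi_g^{-1}.
\]
Now (a) is $\psi_h(\psi_g^{-1}(\calS^g))=\psi_h(\calS^f)=\calS^h$; (b) holds because $\calS^g=\calS^h$ forces $\eta_g=\eta_h$ and hence $\psi_g=\psi_h$; and (c) is the trivial cancellation $(\psi_h\circ\psi_p^{-1})\circ(\psi_p\circ\psi_g^{-1})=\psi_h\circ\psi_g^{-1}$. This sidesteps entirely the problem of making cut-off translations compose, and makes the ``in particular'' clause of (c) (which is the only part actually invoked later in the paper) immediate.
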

\begin{proof} This follows from the transversality of $\Phi_f$ to $\text{Hom}_\C^1(T^\C \mathcal M,\mathcal M\times\Cn)$ and the continuity of the map $f\mapsto \Phi_f$. 
\end{proof}

We now show that for a generic real-analytic embedding $f:\M\rightarrow\Cn$, a compactly-supported perturbation of $f|_{\calC_2}$ can be matched by a compactly-supported pertubation of $f$ on all of $\M$. Further, this can be done in a uniform manner for embeddings close to $f$, and over all points in a fixed compact subset of $\calC_2$. 

\begin{lemma}\label{L:AmbPertNonDeg} 
Let $f:\M\rightarrow\Cn$ be real-analytic and as in Lemma~\ref{L:sing}. Let $\mathcal L\subset \calC_2^f$ be a compact set. Fix an integer $k\geq 3$. Then, there is an $\wt\eps>0$ such that, the neighborhood $\mathfrak U$ granted by Lemma~\ref{L:sing} contains an $\wt\eps$-neighborhood (in the $\cont^k$-norm) of $f$ in $\cont^{\infty}(\M,\Cn)$, and for any $\eps\in(0,\wt\eps)$, there exists $\tau_\eps,\delta_\eps>0$ and $c_\eps>1$ such that the following holds for any $p\in\mathcal L$. Let $f': \M\hookrightarrow\Cn$ be a real-analytic embedding satisfying $\Vert f-f'\Vert_{\calC^k(\M)}<\eps$, and let $\theta^f_{f'}:\M\rightarrow\M$ be the corresponding diffeomorphism granted by Lemma~\ref{L:sing}. Then, for any smooth embedding $g:\mathcal S^{f'}\rightarrow\Cn$ such that 
\begin{itemize} 
\item [(a)] $g-f'|_{\calS^{f'}}$ is compactly supported in
$\mathcal S\cap \theta^f_{f'}(U_p(\tau_\eps))$,
\item [(b)] $\left\Vert g-f'|_{\calS^{f'}}\right\Vert_{\calC^k(\calS^g;\Cn)}<\delta_\eps$,
\end{itemize}
there is a smooth 
embedding $G:\mathcal M\hookrightarrow\Cn$ such that 
\begin{itemize}
\item [(i)] $G-f'$ is compactly supported in $\theta^f_{f'}(U_p(\tau_\eps))$,
\item [(ii)] $G|_{\calS^{f'}}=g$,
\item [(iii)] $\mathcal S^G=\mathcal S^{f'}$,  
\item [(iv)] $\left\Vert G-f'\right\Vert_{\calC^k(\mathcal M)}<c_\eps\delta_\eps$.
\end{itemize}
\end{lemma}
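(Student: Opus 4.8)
The plan is to reduce the ambient extension problem to a local, essentially algebraic statement near a single point $p\in\mathcal L$, and then patch with cut-off functions, controlling all constants uniformly in $p$ by compactness of $\mathcal L$ and real-analyticity of $f$. First I would use Proposition~\ref{P:Coff}: since $p\in\mathcal C_2^f$ and $f$ is real-analytic near $p$, there is a holomorphic change of coordinates putting $M=f(\mathcal M)$ into Coffman's normal form near $f(p)$, in which $\mathcal S^f$ is cut out (locally) by the vanishing of a collection of coordinate functions, and $\mathcal S^f$ itself is totally real and given by an explicit linear slice. The point of the normal form is that, in these coordinates, the Gauss map $\Phi_f$ and the defining functions $B_u$ of $\mathcal S_1$ (Lemma~\ref{L:S1}) have a particularly simple jet at $p$; the nondegeneracy condition \eqref{E:Coff} with $j=2$ together with \eqref{E:nondeg2} (equivalently, Coffman's two nondegeneracy conditions, via Lemma~\ref{L:NDcond}) guarantees that the linearization of the map ``$(\text{ambient perturbation})\mapsto(\text{its restriction to }\mathcal S_1,\ \text{new }\mathcal S_1)$'' is surjective onto the relevant jet space. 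Concretely, I would fix a local tubular-neighborhood identification of a neighborhood of $\mathcal S^{f'}$ in $\mathcal M$ with (a piece of) $\mathcal S^{f'}\times\mathbb R^{m-\dim\mathcal S}$, transported from the $f$-picture via $\theta^f_{f'}$, and write down an explicit extension operator: given $g$ on $\mathcal S^{f'}$ supported in $\mathcal S\cap\theta^f_{f'}(U_p(\tau_\eps))$, extend $g-f'|_{\mathcal S^{f'}}$ to be constant along the normal fibers, multiplied by a cut-off $\chi$ supported in $\theta^f_{f'}(U_p(2\tau_\eps))$ and $\equiv 1$ on $\theta^f_{f'}(U_p(\tau_\eps))$, to obtain a candidate $G_0$ with $G_0|_{\mathcal S^{f'}}=g$ and $G_0-f'$ compactly supported.

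The issue is that $G_0$ need not satisfy $\mathcal S^{G_0}=\mathcal S^{f'}$: perturbing the manifold may move its CR-singular locus. To fix this I would set up a correction: write $G=G_0+\sum_j w_j(\mathbf x)\,\Xi_j$, where the $\Xi_j$ are finitely many fixed ambient vector fields (in the normal-form coordinates these are the coordinate directions $z_u$, $u\ge m$, exactly the directions appearing in the family $Y'$ used in the proof of Proposition~\ref{P:PertII}), and the $w_j$ are small smooth functions supported in $\theta^f_{f'}(U_p(2\tau_\eps))$, to be determined so that the defining functions $B_u(G;\mathbf x)$ of $\mathcal S^G$ agree with those of $\mathcal S^{f'}$, i.e. vanish on $\mathcal S^{f'}$. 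This is a fixed-point problem: $B_u(G;\cdot)$ depends on the $2$-jet of $G$, hence linearly to leading order on the $1$-jet of the $w_j$; the transversality of $\Phi_{f'}$ to $\operatorname{Hom}^1_\C$ (inherited from $f$ since $f'\in\mathfrak U$) together with the $j=2$ rank condition in \eqref{E:Coff} says precisely that the linearized map $\{w_j\}\mapsto\{B_u(G;\cdot)|_{\mathcal S^{f'}}\}$ is a surjective linear operator with a bounded right inverse on the appropriate jet/function spaces. One then solves by the implicit function theorem (or a Banach fixed-point iteration) in $\calC^k$; the size of the solution is $O(\|g-f'|_{\mathcal S^{f'}}\|_{\calC^k})$, which gives (iv) with an appropriate $c_\eps$, and choosing $\delta_\eps$ small enough keeps $G$ an embedding (nearby embeddings of a compact manifold are embeddings) and keeps us inside the coordinate patch, giving (i)--(iii).

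Finally I would address uniformity. For each $p\in\mathcal L$ the normal form, the cut-off radius $\tau(p)$, the operator norm of the right inverse, and hence the constants $\delta(p),c(p)$ depend continuously on $p$ (real-analyticity of $f$ ensures the normal-form coordinate changes can be chosen to vary continuously, and the rank conditions are open); by compactness of $\mathcal L$ there are uniform choices $\tau_\eps,\delta_\eps,c_\eps$. The dependence on $f'$ within the $\eps$-ball is handled the same way: the constants in the implicit function theorem are uniform over a $\calC^k$-neighborhood, and shrinking $\wt\eps$ if necessary makes this neighborhood sit inside $\mathfrak U$, so that $\theta^f_{f'}$ is defined and depends continuously on $f'$ by Lemma~\ref{L:sing}. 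The main obstacle, and the step deserving the most care, is the surjectivity-with-bounded-right-inverse of the linearized defining-function map together with showing all these bounds can be taken locally uniform in $(p,f')$ — this is where the Coffman nondegeneracy hypothesis is genuinely used, and where the normal form of Proposition~\ref{P:Coff} does the heavy lifting by making the relevant Jacobian explicit and manifestly of full rank.
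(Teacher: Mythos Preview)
Your proposal is workable but misses the chief simplification that the Coffman normal form buys. In the paper's proof there is \emph{no} correction step and \emph{no} implicit function theorem: once you are in the normal form of Proposition~\ref{P:Coff}, the naive extension ``constant along the normal fibres'' (your $G_0$, before the cutoff) already satisfies $\mathcal S^{G_0}=\mathcal S^{f'}$ on the nose. The reason is concrete. In those coordinates $\mathcal S$ is the slice $\{\mathbf x''=0\}$ with $\mathbf x''=(x_1,y_1,x_2,\dots,x_{2(n-m)+1})$, and the extension is $G(\mathbf x)=f'(\mathbf x)+h(\mathbf x')$ with $h$ depending only on the tangential variables $\mathbf x'=(x_{2(n-m)+2},\dots,x_{m-1})$. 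Hence the first two columns of $dG_\C$ (the $\partial/\partial x_1,\partial/\partial y_1$ columns) are unchanged, and rows $2,\dots,m-1$ still have zero entries in those two columns. Row-reducing with rows $2,\dots,m-1$ to clear columns $3,\dots,m$ from rows $m,\dots,n$ therefore leaves columns $1,2$ of those last rows \emph{untouched}, and one reads off directly that the rank-$(m-1)$ locus of $dG_\C$ is still $\{\mathbf x''=0\}$. The cutoff in the $\mathbf x''$-direction is then applied at the end, exactly as you do, justified by the fact that $G$ is totally real off $\{\mathbf x''=0\}$.

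So the ``main obstacle'' you single out---surjectivity of the linearised map with a bounded right inverse, uniformly in $(p,f')$---never arises; in the paper's argument it is replaced by a two-line row reduction. Your route via a corrector $\sum_j w_j\Xi_j$ and a fixed-point argument would presumably go through (with $w_j\equiv 0$ as the fixed point, in fact), but it obscures the point: the nondegeneracy hypothesis is used \emph{only} to obtain the normal form, after which the extension problem is algebraically trivial. Uniformity in $p\in\mathcal L$ and $f'$ near $f$ then follows, as you say, from compactness and continuity, but applied to a much smaller collection of data (the biholomorphism to normal form and the cutoff), not to operator norms of right inverses.
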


\begin{proof} We produce $\tau,\delta>0$ and $c>1$, corresponding to $f'=f$ and fixed $p\in\mathcal L$ so that (i)-(iv) hold for a $g$ satisfying (a) and (b), and observe that all the steps in this proof depend continuously on the choice of $f'$ and $p$.

Since $p\in\mathcal C_2$, by Proposition~\ref{P:Coff}, there is a neighborhood $V_p$ of $f(p)$ in $\Cn$ and a biholomorphism $Q:V_p\rightarrow \mathbb B^{m}(0,\eta)\times \mathbb B^{2n-m}(0,\eta)$, for some $\eta\in(0,1)$, such that $Q(p)=0$, and
\beas
		Q(M\cap V_p)=\left\{z\in \mathbb B^{m}(0,\eta)\times \mathbb B^{2n-m}(0,\eta): \ \ 
\begin{aligned}
	&  y_s=0,\ 2\leq s\leq m-1,\\
	& z_{m}=\overline z_1^2,\\
	&z_{m+1}=|z_1|^2+\overline z_1(x_2+ix_3),\\
	&z_u
		=\overline z_1 (x_{2(u-m)}+ix_{2(u-m)+1}),\ m+2\leq u\leq n\\
\end{aligned}
\right\}.
\eeas

Thus, for the rest of the proof, it suffices to assume that $U_p(\tau)=\mathbb B^m(0,\eta)\subset\rl^m$, which has coordinates $\mathbf x=(x_1,y_1,x_2,...,x_{m-1})$, and $f=(f_1,...,f_n):\mathbb B^m(0,\eta)\rightarrow \Cn$ is given by   
\beas 
f_1(\mathbf x)&=&x_1+iy_1,\\
f_s(\mathbf x)&=& x_s,\quad s=2,...,m-1,\\
f_m(\mathbf x)&=&(x_1 -iy_1)^2,\\
f_{m+1}(\mathbf x)&=&(x_1 - iy_1)(x_1+iy_1 + x_2 + i x_3),\\
f_u(\mathbf x)&=&(x_1-iy_1)(x_{2(u-m)}+ix_{2(u-m)+1}),\quad m+2\leq u\leq n.
\eeas 
Referring to \eqref{E:CRS}, we obtain that 
\bes \calS\cap U_p(\tau) =\left\{\mathbf x\in \mathbb B^m(0,\eta):
x_1=y_1=x_2=\cdots=x_{2(n-m)+1}=0\right\}. 
\ees
Let $\mathbf x'=(x_{2n-2m+2},...,x_{m-1})$, and  $f|_{\mathcal S}$ be denoted by $f_{\calS}$. Then, 
	\bes
		f_{\calS}:(0,\mathbf x')\mapsto (0,...,0), \quad \mathbf x'\in\mathbb B^{3m-2n-2}(0,\eta). 
	\ees
Given $h\in\cont^{\infty}_c(\mathbb B^{3m-2n-2}(0,\eta);\Cn)$, let $g(0,\mathbf x')=f_{\calS}(0,\mathbf x')+h(\mathbf x')$, $\mathbf x'\in \mathbf B^{3m-2n-2}(0,\eta)$ (and $f_{\calS}$ outside this ball). Assume that 
$\Vert h\Vert_{\calC^k(\mathbb B^{3m-2n-2}(0,\eta))}<\delta$,
where $\delta$ is chosen so that $g(\calS\cap U_p(\tau))\subset \mathbb B^{2n}(0,\eta)$. Then, $h$ takes the form 
	\bes
		h(\mathbf x')=\left(\phi^1(\mathbf x')+i\psi^1(\mathbf x'),...,\phi^n(\mathbf x')+i\psi^n(\mathbf x')\right),
	\ees
for some $\phi^j,\psi^j\in\calC_c^\infty\left(\mathbb B^{3m-2n-2}(0,\eta)\right)$, with $\Vert\phi^j\Vert_{\calC^k},\Vert\psi^j\Vert_{\calC^k}<\delta$.

Now, let $G=(G_1,...,G_n):\mathbb B^{m}(0,\eta)\rightarrow\Cn$ be given by 
\bes
		G:\mathbf x\rightarrow f(\mathbf x)+\left(\phi^1(\mathbf x')+i\psi^1(\mathbf x'),...,\phi^n(\mathbf x')+i\psi^n(\mathbf x')\right).
\ees
Claims (ii) and (iv) clearly hold for $G$ (for an appropriate choice of $c>1$). To compute $\calS^{G}$ (to confirm (iii)), we seek points in $\mathbf x\in \mathbb B^{m}(0,\eta)$ at which the rank of the matrix $dG_\C(\mathbf x)$ is $m-1$. After some elementary row reductions, the matrix becomes 
For sufficiently small $\eps_p$, $\mathbf I_{3m-2n-2}+\mathbf E$ is invertible, and thus $\text{rank}DG(\mathbf x)\geq m-2$ for all $\mathbf x\in V_p$. Thus, we seek those $\mathbf x\in V_p$ for which $\text{rank}_{\C!}dG_\C(\mathbf x)=m-1$. After some simple row reductions (and shrinking $\delta$, if necessary), the matrix $dG_\C(\mathbf x)$ becomes
\bes
\left(\begin{array}{c c|c c c}
1 & i & 0 & \cdots                        & 0 \\
& & & &  \\
\hline
& & & &\\
0 & 0 & & &  \\
\vdots & \vdots &   & \mathbf I_{m-2} &    \\
0 & 0 & & &  \\
& & & &\\
\hline
& & & & \\
2(x_1-iy_1)& 
-i2(x_1-iy_1)
& & &  \\
2x_1+x_2+ix_3 & 
-2y_1-i(x_2+ix_3)& 
 &  &  \\
x_4+ix_5& 
-i(x_4+ix_5)
&  & \mathbf 0_{(n-m+1)\times(m-2)}  & \\
\vdots & \vdots & & \\
x_{2n-2m}+ix_{2n-2m+1}& 
-i(x_{2n-2m}+ix_{2n-2m+1})
&  &\\
\end{array}\right),
\ees
which has rank $m-1$ precisely on the set 
	\bes
	\calS\cap U_p(\tau) =\left\{\mathbf x\in \mathbb B^m(0,\eta):
x_1=y_1=x_2=\cdots=x_{2(n-m)+1}=0\right\}.
	\ees
Finally, for (i), we note that $G$, as defined, is not compactly supported in the directions normal to the $\mathbf x'$ coordinates. However, $G$ is totally real off of the $\mathbf x'$-axes, and thus, we can replace $G$ by 
	\bes
		G(\mathbf x)\mapsto f(\mathbf x)+\chi(\mathbf x'')h(\mathbf x'), 
	\ees 
where $\mathbf x''=(x_1,y_1,x_2,...,x_{2(n-m)}+1)$ and $\chi$ is a radially symmetric cutoff function that is a constant $1$ in a neighborhood of $\mathbf x''$, and is chosen such that $G$ remains totally real off of the $\mathbf x'$-axes. 

\end{proof}


Before we proceed to tackle global perturbations, we prove a result that will be invoked multiple times. 

\begin{lemma}\label{L:PCNbhd} 
Let $T\subset\Cn$ be a closed totally real submanifold. Let $K\subset T$ be a compact subset that is polynomially convex in 
$\Cn$. Then, there is a bounded neighbourhood $U$ of $K$ in $\Cn$ such that $\overline{U}\cap T$ is polynomially convex 
in $\Cn$ and is a manifold with smooth boundary.
\end{lemma}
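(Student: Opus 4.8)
```latex
\textbf{Proof proposal.} The plan is to exhaust $K$ by polynomially convex compact neighborhoods inside a fixed ambient polynomially convex neighborhood of $K$, and then use Sard's theorem on a suitable exhaustion function restricted to $T$ to pick one whose boundary slice meets $T$ transversally.

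First I would invoke the standard fact (see Stout~\cite{St}) that, since $K$ is polynomially convex, there is a smooth nonnegative plurisubharmonic function $\phi$ on $\C^n$ with $\phi^{-1}(0)=K$ and $\phi$ strictly plurisubharmonic off $K$; consequently every sublevel set $\{\phi\le b\}$ is a compact polynomially convex neighborhood of $K$ (passing to the connected component containing $K$ if necessary). This is exactly the device already used in the proof of Proposition~\ref{l.pertlemma}, so it can be cited verbatim. Next, apply Sard's theorem to the restriction $\phi|_T$, which is a smooth function on the manifold $T$: for almost every $b>0$ the value $b$ is a regular value of $\phi|_T$, so $\{\phi=b\}\cap T$ is either empty or a smooth closed submanifold of $T$ of dimension $\dim T-1$, and $\{\phi\le b\}\cap T$ is then a smooth manifold with boundary $\{\phi=b\}\cap T$. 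Choosing such a regular value $b$ small enough that $\{\phi\le b\}$ is still (a connected component of a sublevel set that is) polynomially convex and bounded, set $U=\{\phi<b\}$. Then $\overline U=\{\phi\le b\}$ is a bounded polynomially convex neighborhood of $K$ in $\C^n$, and $\overline U\cap T=\{\phi\le b\}\cap T$ is a manifold with smooth boundary.

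It remains to check that $\overline U\cap T$ is polynomially convex in $\C^n$. This follows from a local maximum/peak-point argument: $\overline U\cap T$ is the intersection of the polynomially convex set $\overline U$ with the totally real manifold $T$. Since $\overline U=\{\phi\le b\}$ with $\phi$ plurisubharmonic, $\overline U$ is holomorphically convex in a Stein neighborhood, and the intersection of a polynomially convex set with a closed totally real submanifold is polynomially convex provided the boundary of the region cut out on $T$ is nice — here one can argue directly: if $q\in\widehat{\overline U\cap T}$, then $q\in\widehat{\overline U}=\overline U$, so $q\in\{\phi\le b\}$; and a standard argument using that $T$ is totally real together with local peak functions (or the Hörmander--Wermer/Oka--Weil machinery on the Stein neighborhood $\{\phi<b'\}$ for $b<b'$) forces $q\in T$. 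Alternatively, and more cleanly, observe that $\overline U\cap T$ is a compact subset of the totally real manifold $T$ contained in the polynomially convex set $\overline U$; by a result on polynomial convexity of compacts in totally real manifolds sitting inside polynomially convex hulls (the same principle used in Lemma~\ref{L:1} and Lemma~\ref{L:2}), together with the fact that $\overline U\cap T$ has nonempty interior boundary structure as a manifold with boundary, one gets $\widehat{\overline U\cap T}\subseteq \overline U\cap T$.

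The main obstacle I expect is precisely this last point: verifying that cutting the totally real $T$ by the smooth sublevel set of $\phi$ keeps the result polynomially convex, rather than merely keeping it inside a polynomially convex set. The cleanest route is probably to shrink $b$ so that on $\{\phi\le b'\}$ (with $b<b'$ and $b'$ also a regular value of $\phi|_T$) one has a Stein neighborhood basis, and then to note that $\overline U\cap T$ is $\OO(\{\phi< b'\})$-convex — hence, by Oka--Weil, polynomially convex — because a plurisubharmonic exhaustion of $\{\phi<b'\}$ restricted to the totally real $\overline U\cap T$ has no interior critical structure that could create hull. I would carry this out by combining the strict plurisubharmonicity of $\phi$ off $K$ with the total reality of $T$ to build a plurisubharmonic function that is strictly so near $\overline U\cap T$ and exhausts a polynomially convex neighborhood, which is the standard criterion.
```
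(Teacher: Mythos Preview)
Your setup is exactly right and matches the paper: take a nonnegative plurisubharmonic $\phi$ with $\phi^{-1}(0)=K$ that is strictly plurisubharmonic off $K$, and use Sard's theorem on $\phi|_T$ to arrange that $\overline U\cap T=\{\phi\le b\}\cap T$ is a manifold with smooth boundary. You also correctly identify the only nontrivial point as the polynomial convexity of $\overline U\cap T$.

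The gap is in how you handle that point. Your middle paragraph floats several heuristics --- ``intersection of a polynomially convex set with a totally real submanifold is polynomially convex'', ``local peak functions force $q\in T$'', an appeal to the machinery behind Lemmas~\ref{L:1}--\ref{L:2} --- but none of these is a theorem one can cite, and the first is simply false without additional hypotheses (that is precisely the content of the lemma). Your final paragraph has the right instinct (combine the strict plurisubharmonicity of $\phi$ with the total reality of $T$ to build a psh function certifying convexity), but you stop short of saying how.

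The paper executes this cleanly: let $\delta(z)$ be the squared Euclidean distance to $T$, which is strictly plurisubharmonic on a neighborhood $W$ of $T$ since $T$ is totally real. Choose $\eps_1,\eps_2>0$ with $\{\phi<\eps_2\}\Subset\{\delta<\eps_1\}\Subset W$, a cutoff $\chi$ equal to $1$ on $\{\delta<\eps_1\}$ and supported in $W$, and $R>0$ large enough that $\chi\delta<R(\phi-\eps_2)$ outside $\{\delta<\eps_1\}$. Then
\[
\sigma(z)=\max\{R(\phi(z)-\eps_2),\,\chi(z)\delta(z)\}
\]
is plurisubharmonic on $\C^n$, nonnegative, and vanishes exactly on $\{\phi\le\eps_2\}\cap T$, which is therefore polynomially convex. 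Finally pick a Sard-regular $\eps_3<\eps_2$ and note that $\{\phi\le\eps_3\}\cap T=\{\phi\le\eps_3\}\cap\bigl(\{\phi\le\eps_2\}\cap T\bigr)$ is an intersection of two polynomially convex compacts, hence polynomially convex. This two-step (first prove convexity for some slice via the explicit $\sigma$, then intersect down to a Sard level) is the missing concrete construction your proposal was reaching for.
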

\begin{proof}
Since $K$ is polynomially convex in $\Cn$, there exists a smooth plurisubharmonic function $\rho$ on $\C^n$ such that $\rho$ vanishes on $K$, and is positive and strictly plurisubharmonic on $\C^n \setminus K$. For $z\in\Cn$, let $\delta(z)$ denote the squared Euclidean distance of $z$ from $T$. Then, $\delta$ is a strictly plurisubharmonic function on some neighbourhood $W$ of $T$. Given $\eps>0$, let 
\beas 
U_\eps&=&\{z\in\Cn:\rho(z)<\eps\},\\
W_\eps&=&\{z\in\Cn:\delta(z)<\eps\}.
\eeas
Choose $\eps_1, \eps_2 >0$ such that $U_{\eps_2}\Subset W_{\eps_1}\Subset W$. 
Let $\chi\geq 0$ be a cutoff function which is 1 on $ W_{\eps_1}$ and vanishes outside $W$. 
Let $R>0$ be large enough so that $\chi \delta < R(\rho -\eps_2)$ on $\C^n \setminus  W_{\eps_1}$. 
Now consider the function
$$
\sigma (z) = \max \{R(\rho(z) - \epsilon_2), \chi(z) \delta(z)\},\qquad z\in\Cn.
$$ 
By construction, $\sigma$ is a continuous function on $\C^n$ that vanishes on  
$\overline {U_{\eps_2}}\cap T= \{ z\in T : \rho(z) \le \eps_2\}$, 
and is positive everywhere else. Furthermore, $\sigma$ is plurisubharmonic on $\C^n$. Indeed, on $W_{\eps_1}$, it is 
the maximum of two plurisubharmonic functions, and outside a compact subset of  $W_{\eps_1}$, it agrees with 
$R(\rho-\eps_2)$. Thus, 
we obtain that $\overline U_{\eps_2}\cap T$ is polynomially convex. 

By Sard's theorem, for almost all $c\in \R$, the level set $\{(\rho|_T)^{-1}(c)\}$ is a smooth submanifold of $T$. 
Therefore, since $K$ is a compact subset of $\overline U_{\eps_2}\cap T$, there exists a choice of $\eps_3 >0$ such that 
$$
\overline U_{\eps_3}\cap T \Subset \overline U_{\eps_2}\cap T ,
$$
and $\{\rho = \eps_3\}\cap T$ is a smooth manifold. The set $\overline U_{\eps_3}$ is polynomially convex, and therefore, 
$$
\overline U_{\eps_3} \cap T = \overline U_{\eps_3} \cap (\overline U_{\eps_2}\cap T)
$$ 
is polynomially convex as the intersection of two polynomially convex compacts (see~\cite[p.~5]{St}). 
Thus, $U=U_{\eps_3}$ is the required neighbourhood of $K$.    
\end{proof}

The next proposition shows that if $M\subset\Cn$ in general position has only order-one CR singularities, and its set of Coffman degenerate singularities admits a polynomially convex neighbourhood within its set of CR singularities, then $M$ can be perturbed to make its entire set of CR singularities polynomially convex.

\begin{prop}\label{P:Amb}
Let $f:\M\rightarrow\Cn$ be as in Lemma~\ref{L:sing}. Fix an integer $k\geq 3$. Let $\calK\subset \calS^f$ be a compact set such that $\calS^f\setminus\calK\Subset \calC_2^f$, and $K=f(\calK)$ is a polynomially convex 
compact subset of $\Cn$. Then, given $\eps>0$, there is an embedding 
$F: \M\hookrightarrow\Cn$ such that
\begin{itemize}
\item [(1)] $\Vert f-F\Vert_{\calC^k(\M;\Cn)}<\eps$,
\item [(2)] the set of CR-singularities of $F(\M)$ is polynomially convex in $\Cn$. 
\end{itemize} 
\end{prop}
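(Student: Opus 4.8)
The idea is to perturb the CR-singular submanifold $\calS^f$ \emph{inside itself} so that its image becomes polynomially convex while $\calK$ stays fixed, and then to realise this perturbation as the restriction of an ambient perturbation of $f$ that does not move the CR-singular set. As a preliminary reduction I would first replace $f$ by a nearby real-analytic embedding: all the hypotheses on $f$ (that $\Phi_f$ is transverse to $\operatorname{Hom}_\C^1(P,Q)$ and that $\calS_\nu^f=\emptyset$ for $\nu\ge2$) are $\calC^k$-open, and polynomial convexity of $f(\calK)$ is preserved under a small enough perturbation — immediately so after enlarging $\calK$ by Lemma~\ref{L:PCNbhd} when $S_1=f(\calS^f)$ is totally real (as it is, generically, in the dimension range of the main theorem, where $\W_0=\calC_0=\emptyset$), and via a plurisubharmonic defining function for $f(\calK)$ in general. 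Transporting $\calK$ and the stratum $\calC_2^f$ by the diffeomorphism $\theta^f_{\hat f}$ of Lemma~\ref{L:sing}, we may assume $f$ real-analytic and reduce to finding $F$ with $\|F-f\|_{\calC^k(\M)}<\eps$.

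\textbf{Perturbing $\calS^f$ within itself.} Since the $\calC_j$, $j=0,1,2$, partition $\calS^f$ and $\calS^f\setminus\calK\Subset\calC_2^f$, we have $\calC_0^f\cup\calC_1^f\subset\calK$; because $\calC_0=\W_0$ is exactly the locus where $S_1$ fails to be totally real, $f(\calS^f)\setminus f(\calK)=S_1\setminus K$ is a totally real submanifold, while $\calS^f$ itself is a compact submanifold (Proposition~\ref{P:PertI}). Applying Proposition~\ref{l.pertlemma} to the manifold $\calS^f$, the embedding $f|_{\calS^f}:\calS^f\hookrightarrow\Cn$, the polynomially convex compact $C=K=f(\calK)$, and the given $k$, I obtain for every $\delta>0$ a smooth embedding $g:\calS^f\hookrightarrow\Cn$ with $g=f$ on $\calK$, $\|g-f|_{\calS^f}\|_{\calC^k(\calS^f)}<\delta$, and $g(\calS^f)$ polynomially convex (and, being built from Lemmas~\ref{L:1}--\ref{L:2}, this polynomial convexity is $\calC^1$-stable). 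The closure $\mathcal L:=\overline{\operatorname{supp}(g-f|_{\calS^f})}$ is then a compact subset of $\calC_2^f$.

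\textbf{Realising $g$ ambiently.} To globalise $g$ I would invoke Lemma~\ref{L:AmbPertNonDeg} with the compact set $\mathcal L\subset\calC_2^f$, obtaining constants $\tau_\eps,\delta_\eps,c_\eps$ uniform over $\mathcal L$ and over real-analytic embeddings $\calC^k$-close to $f$. Cover $\mathcal L$ by finitely many balls $U_{p_1}(\tau),\dots,U_{p_N}(\tau)$, $p_i\in\mathcal L$, $\tau<\tau_\eps$, fix a subordinate partition of unity $\{\chi_i\}$, and write $g-f|_{\calS^f}=\sum_i h_i$ with $h_i=\chi_i\,(g-f|_{\calS^f})$ supported in $\calS^f\cap U_{p_i}(\tau)$. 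Starting from $G_0=f$ and inductively applying Lemma~\ref{L:AmbPertNonDeg} — to a real-analytic embedding $\calC^k$-close to $G_{i-1}$, with the prescribed perturbation $h_i$ of its restriction to the CR-singular set, and with CR-singular sets identified by the diffeomorphisms of Lemma~\ref{L:sing} — I get embeddings $G_i:\M\hookrightarrow\Cn$ with $\operatorname{supp}(G_i-G_{i-1})\subset U_{p_i}(\tau)$, $\calS^{G_i}=\calS^{G_{i-1}}$, $G_i$ restricting on $\calS^{G_i}$ to $G_{i-1}$ plus $h_i$, and $\|G_i-G_{i-1}\|_{\calC^k(\M)}<c_\eps\delta_\eps$. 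Choosing $\delta$, hence $\delta_\eps$, so small that $Nc_\eps\delta_\eps<\eps$ and the accumulated re-approximation errors are negligible, the embedding $F:=G_N$ satisfies $\|F-f\|_{\calC^k(\M)}<\eps$ and $\calS^F=\calS^f$, with $F|_{\calS^F}$ equal to $g$ up to a $\calC^1$-small, hence polynomial-convexity-preserving, perturbation; so the CR-singular set of $F(\M)$ is polynomially convex, which gives $(1)$ and $(2)$.

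\textbf{Main obstacle.} The crux is the last step: transferring the abstract perturbation $g$ of $\calS^f$ to an ambient perturbation of $f$ without creating or destroying CR singularities, which is exactly the content of Lemma~\ref{L:AmbPertNonDeg} (resting on the Coffman normal form of Proposition~\ref{P:Coff}). The genuinely delicate book-keeping is (i) keeping a real-analytic representative available at each of the $N=N(\eps)$ inductive steps, so that Lemma~\ref{L:AmbPertNonDeg} keeps applying, and (ii) controlling the accumulation of the $\calC^k$- and re-approximation errors, so that $F$ remains $\eps$-close to $f$ and the polynomial convexity of the image of $\calS^f$ survives. A softer point to monitor is the real-analytic reduction of Step~1 and the stability of polynomial convexity of $f(\calK)$ used there.
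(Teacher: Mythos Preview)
Your approach is correct but organised differently from the paper's. You first produce the \emph{entire} polynomially convex perturbation $g$ of $f|_{\calS^f}$ in one stroke, by applying Proposition~\ref{l.pertlemma} directly to the compact manifold $\calS^f$ (legitimate, since $\dim\calS^f=3m-2n-2<n$ and $\W_0\subset\calK$ forces $S_1\setminus K$ totally real), and only then lift $g$ to $\M$ by chopping $g-f|_{\calS^f}$ with a partition of unity and feeding the pieces one by one into Lemma~\ref{L:AmbPertNonDeg}. The paper instead \emph{interleaves} the two operations: it never constructs a global target $g$, but grows the polynomially convex subset of $S_1$ one cube $\calR^j\subset U_{p_j}(\tau_\eps)$ at a time, obtaining each local perturbation from the Arosio--Wold theorem on the skeleta of $\calR^j$, lifting it immediately via Lemma~\ref{L:AmbPertNonDeg}, and re-analytifying before moving to the next cube. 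What your route buys is conceptual cleanliness --- the hard work on $\calS^f$ and the ambient lifting are decoupled --- at the price of having to argue at the very end that $F|_{\calS^F}$ is $\calC^1$-close enough to the pre-committed $g$ for L\o w--Wold stability to carry polynomial convexity across; here you should be explicit that the partition-of-unity constants scale like $\tau_\eps^{-k}$ and that $N=N(\tau_\eps)$ is fixed once $\eps$ is, so the required bound on $\delta$ is non-circular. The paper's route avoids this endgame because polynomial convexity is certified incrementally after each cube and each real-analytic re-approximation (via Lemma~\ref{L:PCNbhd} and L\o w--Wold applied to a totally real neighbourhood $\Omega$ of the current $P$), so there is no single global stability threshold to meet. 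Both arguments tacitly use that $S_1$ is totally real near the sets on which stability is invoked; you flag this correctly in your preliminary reduction.
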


\begin{proof} After a small perturbation, we may assume that $f:\M\rightarrow\Cn$ is real-analytic. Denote 
$\calS^f$ by $\calS$ and $\calC_2^f$ by $\calC_2$. Let $\mu = \dim_{\rl\!} \calS$. As per our convention, set $M = f(\M)$, $S=f(\calS)$, $C_2=f(\calC_2)$, and $K=f(\calK)$.

Let 
$\mathcal L=\overline{\calS\setminus\calK}$, 
and $\wt\eps>0$ as granted by Lemma~\ref{L:AmbPertNonDeg} corresponding to this choice of $\mathcal L$. Let $\eps\in(0,\wt\eps)$. Let $\tau_\eps,\delta_\eps>0$ and $c_\eps>1$ be as granted by Lemma~\ref{L:AmbPertNonDeg} corresponding to this choice of $\eps$. 
The idea of the proof is to incrementally increase the size of the 
polynomially convex compact set in the set of CR singularities by adding to $\calK$ a smooth image $\calR$ in $\calS$ of a standard cube $I^\mu=[0,1]^\mu \subset \R^\mu$ 
and then producing a smooth perturbation $g$ 
of $f|_{\calS}$ so that $g(\calK \cup \calR)$ is polynomially convex, followed by a matching smooth perturbation $f^\sharp$ of $f$ on $\M$. This process is then repeated for (a real-analytic perturbation of) $f^\sharp$. The size of $\calR$ will depend on $\tau_\eps$, and will be such that finitely many such cubes cover $\mathcal L$. After applying this procedure finitely many times, we obtain a perturbation of $f$ 
for which the whole set of CR-singularities is polynomially convex. A detailed construction is as follows.

Let $\calK\subset\calS$ be as given. Let $\calR=\phi(I^\mu) \subset S$, where $\phi$ is a 
diffeomorphism from a neighbourhood of $I^\mu$ in $\R^\mu$ onto its image in $\calS$. Assume that $\calR\cap \calK \ne \varnothing$. Set $R=f(\calR)$. 
The principal step is to obtain a polynomially convex perturbation of $K \cup R$. For this, we induct over the skeleta of $\calR$. Let $\calR_k$ denote the $k$-dimensional skeleton of $\calR$, for $k=0,1,...,\mu$. Set $R_k=f(\calR_k)$, $k=0,1,...,\mu$. 

As a base of the induction, note that since $R_0$ is a finite set of points, by~\cite[Lemma 2.1]{GS1}, the set $K \cup R_0$ is polynomially convex. By 
Lemma~\ref{L:PCNbhd}, there exist compact 
polynomially convex neighbourhoods $V_0\Subset V_0'\subset S$ of $K \cup R_0$ that are smooth manifolds with boundary. Set $Q_1=R_1\setminus V_0$ and ${Q_1'}=R_1\setminus V_0'$. Then, $Q_1'\subset Q_1\subset R_1$ are smooth (totally real) 
manifolds with boundary of dimension $1$ lying entirely in $C_2$, and such that 
\bes 
K \cup R_0 \cup R_1\subset V_0 \cup Q_1= V_0' \cup Q_1'.
\ees
Set $g_0=f$. Set $\mathcal Q_1=g_0^{-1}(Q_1)$ and $\mathcal Q_1'=g_0^{-1}(Q_1')$. Note that $\mathcal Q_1'\subset\mathcal Q_1\subset \mathcal R_1$. 

\begin{figure}[H]
\begin{overpic}[grid=false,tics=10,scale=0.3]{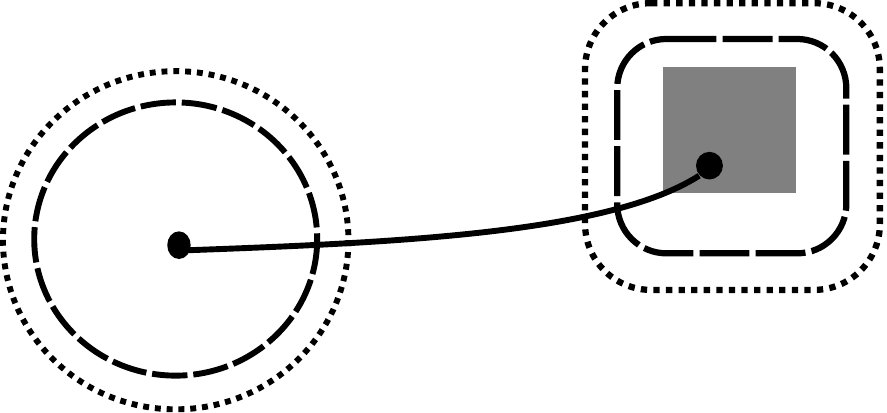}
\begin{tiny}
\put(20,29){$R_0$}
\put(20,8){$V_0$}
\put(42,0){$V_0'$}
\put(95,40){$R_0$}
\put(110,25){$V_0$}
\put(125,15){$V_0'$}
\put(108,43){$K$}
\put(65,29){$R_1$}
\end{tiny}
\end{overpic}
\smallskip 
\caption{The grey region is $K$; the dashed and dotted lines bound $V_0$ and $V_0'$, respectively.}\label{F:base}
\end{figure}
\vspace{-10pt}
By \cite[Theorem 1.4]{ArWo19}, applied to $\mathcal Q_1'$ as $M$ and $V_0$ as $K$, for any $\delta>0$, there exists a perturbation of $g_0|_{\mathcal Q_1}$ of the form $g_0+h$, so that $h:\mathcal Q_1\rightarrow g_0(\calC_2)$ 
	\begin{itemize}
	\item [(a)] is compactly supported on $\mathcal Q_1$, 
	\item [(b)] has $\delta$-small $\calC^k$-norm,
	\item [(c)] vanishes on $\mathcal Q_1\setminus \mathcal Q_1'$, 
	\end{itemize}
and $V_0\cup (g_0+h)(\mathcal Q_1)$ is polynomially convex. We claim that given $\delta >0$ there exists a perturbation $g_1: \calS \to \C^n$ of $g_0$ satisfying
\begin{itemize}
\item [(1)] $g_1-g_0$ is compactly supported in the $\delta$-neighbourhood of $\mathcal Q_1$ in $\calS$,
\item [(2)] $\Vert g_1-g_0\Vert_{\calC^k(\calS)}<\delta$
\item [(3)] $g_1\left(\calK\cup \mathcal R_0\cup\mathcal R_1\right)$ is polynomially convex in $\Cn$. 
\end{itemize}
Using a partition of unity, we may assume that the compact support of $h$ that lies in $\mathcal Q_1\subset \calS$, is in fact contained in a 
coordinate chart of $\calS$, which does not intersect $\calK\cup \mathcal  R_0\cup(\mathcal R_1\setminus \mathcal Q_1)$. In this chart, we may assume that $\calS = \R^{1}_x \times \R^{\mu-1}_y$, $\mathcal Q_1=\R^{1}_x\times\{0\}$, $h=h(x)$, and $\R^1_x\times [-\eta,\eta]^{\mu-1}$ is contained in the $\delta$-neighbourhood of $\mathcal Q_1$ in $\mathcal  S$.  
Let $\sigma$ be a radial bump function supported on $[-\eta,\eta]^{\mu-1}$ with $\sigma(0)=1$. Choose $A>0$ such that $\Vert\sigma\Vert_{\calC^k(\calS)} < A$, and choose $\delta_1 < \delta/A$. Let $h$ be the map produced above, but for $\delta=\delta_1$. Then, the function $h(x)\sigma(y)$ has compact support in the coordinate chart of 
$\calS$, and can be extended as 0 to the rest of $\calS$. We denote this extension by $h_0$. It agrees with $h$ on $\mathcal Q_1$, vanishes on $\calK\cup \mathcal  R_0\cup(\mathcal R_1\setminus \mathcal Q_1)$, 
and satisfies $\Vert h_0\Vert_{\calC^k(\calS)}=\Vert h(x) \sigma(y)\Vert_{\calC^k(\calS)} <\delta$. This implies that $g_1 = g_0+ h_0$ is a perturbation of 
$\calS$ that satisfies (1)-(3). This proves the claim.
 
We repeat this procedure inductively. Finally, we have that for any $\delta>0$, there exists a smooth embedding $g:\calS\rightarrow\Cn$ such that $f-g$ is compactly supported on the $\delta$-neighbourhood of $\mathcal R$, $f|_K=g|_K$, $\Vert f-g\Vert_{\calC^k(\calS)}<\delta$, and $g(\calK\cup\calR)$ is polynomially convex in $\Cn$. 

We now combine this procedure with Lemma~\ref{L:AmbPertNonDeg}. 
Let $\eps >0$ be as given. Set $\delta_1=\min\{\delta_\eps,\eps/4c_{\eps}\}$. Let 
$p_1\in \calK \setminus \operatorname{int}{\calK}$ so that $p_1 \in \calC_2$. Let $\calR^1$ be a diffeomorphic image of the cube $I^\mu \subset \R^\mu$ in $\calS$ such that the $\delta_1$-neighbourhood of $\calR^1$ is contained in $U_{p_1}(\tau_\eps)$ and $p_1\in\calR^1$. By the principal step outlined above, there exists a perturbation $g_1: \calS\to \C^n$ of $f|_{\calS}$ such that $g_1$ coincides with $f$ outside a compact set in $\calS\cap U_{p_1}(\tau_\eps)$, satisfies $\Vert f-g_1\Vert_{\calC^k(\calS)}<\delta_1$, and $g_1(\calK\cup\calR^1)$ is polynomially convex. By Lemma~\ref{L:AmbPertNonDeg}, (applied to $f'=f$ itself),
there exists a smooth perturbation $G_1: \M \to \C^n$ of $f$,  which agrees with $f$ outside a compact subset of $U_{p_1}(\tau_\eps)$, coincides with $g_1$ on $\calS$, satisfies $\calS^{G_1}=\calS^{f}=\calS$, and 
	\bes	
		\Vert f-G_1\Vert_{\calC^k(\M)}<c_\eps\delta_1\leq\frac{\eps}{4}.
	\ees	
We wish to continue by induction, but $G_1$ itself may not be real-analytic. We will replace $G_1$ by a carefully chosen real-analytic approximant of $G_1$ as follows. Recall that $P=G_1(\calK\cup \calR^1)$ is a polynomially convex compact subset of the totally real manifold $G_1(\calS)$. By Lemma~\ref{L:PCNbhd}, there is a bounded totally real manifold with smooth boundary $\Omega\subset G_1(\calS)$ that contains $P$ and whose closure is polynomially convex. By \cite[Theorem 1]{LW}, there is a $\delta_{\Omega}>0$ such that any smooth perturbation of $G_1:G_1^{-1}(\Omega)\rightarrow\Cn$ that is $\delta_{\Omega}$ close to it in the $\cont^{k-1}$-norm (on $G_1^{-1}(\Omega)$) must be a totally real and polynomially convex embedding of $G_1^{-1}(\Omega)$. Recall that $\mathfrak U\subset\cont^\infty_{\operatorname{emb}}(\M;\Cn)$ is the neighborhood of $f$ as granted by Lemma~\ref{L:sing}. Since $(f,G_1)\in\mathfrak U\times\mathfrak U$ (by our choice of $\wt\eps$) and $\calS^f=\calS^{G_1}$ (by construction), there exists an $\eps_\Omega>0$ such that 
\bes
	\text{if }\Vert G_1-h\Vert_{\cont^k}<\eps_\Omega,\quad  
\text{then }
	\Vert G_1-h\circ \theta^{f}_{h}\Vert_{\cont^{k-1}}<\delta_\Omega.  
\ees
Here, we have used that $\theta^{G_1}_h=\theta^f_{h_1}$ (see (c) in Lemma~\ref{L:sing}). Now, let $f_1:\M\rightarrow\Cn$ be a real-analytic embedding of $\M$ such that 
	\bes
		\Vert f_1-G_1\Vert_{\cont^k(\M)}<\min\left\{\eps_\Omega,\frac{\eps}{4}\right\}.
	\ees
Then, the diffeomorphism $\theta^{f}_{f_1}=\theta^f_{f_1}:\calS^f\rightarrow\calS^{f_1}$ satisfies $\Vert G_1-f_1\circ \theta^{f}_{f_1}\Vert_{\cont^{k-1}(\M)}<\delta_{\Omega}$. Thus $\left(f_1\circ\theta^{f}_{f_1}\right)(\calK\cup\calR^1)$ is polynomially convex in $\Cn$. We have produced a real-analytic embedding $f_1:\M\rightarrow\Cn$ such that the following hold:
	\begin{itemize}
\item $\Vert f-f_1\Vert_{\cont^k(\M)}<\eps/2$,
\item $(f_1\circ \theta_1)(\calK\cup\calR^1)$ is polynomially convex in $\Cn$, for some diffeomorphism $\theta_1:\calS^{f}\rightarrow\calS^{f_1}$.
\end{itemize}

We demonstrate the induction procedure by outlining the next step. Set $\delta_2=\min\{\delta_\eps,\eps/8c_\eps\}$. Let $\delta_2^*>0$ be such that 
	\be\label{E:theta}
		\text{if}\left\Vert f_1\circ \theta^{f}_{f_1}-g_2\circ \theta^{f}_{f_1}\right\Vert_{\cont^k(\calS^f)}<\delta_2^*,\quad \text{then}
			\Vert f_1-g_2\Vert_{\cont^k(\calS^{f_1})}<\delta_2. 
	\ee
Let $p_2\in (\calK\cup\calR^1)\setminus\operatorname{int}(\calK\cup\calR^1)$. Let $\calR^2$ be a cube in $\calS$ such that the $\delta_2^*$-neighbourhood of $\calR^2$ is contained in $U_{p_2}(\tau_\eps)$ and $p_2\in\calR^2$. As before, by the principal step outlined above, there is a perturbation $\wt g_2: \calS\to \C^n$ of $\wt f_1=(f_1\circ \theta^{f}_{f_1})|_{\calS}$ such that $\wt g_2$ coincides with $\wt f_1$ outside a compact set in $\calS\cap U_{p_2}(\tau_\eps)$, satisfies $\Vert \wt f_2-\wt g_2\Vert_{\calC^2(\calS)}<\delta^*_2$, and $\wt g_2(\calK\cup\calR^1\cup\calR^2)$ is polynomially convex. Set $g_2=\wt g_2\circ {\theta^{f}_{f_1}}^{-1}$ on $\calS^{f}$. Since $\Vert f-f_1\Vert_{\calC^2(\calS)}<\eps$, we invoke Lemma~\ref{L:AmbPertNonDeg} for $f'=f_1$ to obtain a smooth perturbation $G_2:\M\rightarrow\Cn$ of $f_1$, that coincides with $g_2$ on $\calS^{f_1}$, satisfies $\calS^{G_2}=\calS^{f_1}$, and 
	\bes	
		\Vert f_1-G_2\Vert_{\calC^k(\M)}<c_\eps\delta_2\leq\frac{\eps}{8}.
	\ees	
To replace $G_2$ with a real-analytic approximant, we repeat the idea above for $P=(G_2\circ\theta^{f}_{f_1})(\calK\cup\calR^1\cup\calR^2)$, to produce a real-analytic embedding $f_2:\M\rightarrow\Cn$ so that $\Vert f_1-f_2\Vert_{\cont^k(\M)}<\eps/4$, and $(f_2\circ\theta_2):(\calK\cup\calR^1\cup\calR^2)$ is polynomially convex for some diffeomorphism $\theta_2:\calS^f\rightarrow\calS^{f_2}$.

Repeating this procedure $N$ times, we obtain an embedding $f_N:\M\rightarrow\Cn$ and a diffeomorphism $\theta_N:\calS^f\rightarrow\calS^{f_N}$ such that $(f_N\circ\theta_N)(\calK\cup\calR^1\cup\cdots\cup\calR^N)$ polynomially convex in $\Cn$, and 
\bes
\Vert F-f\Vert_{\calC^2({\M})}\le\Vert f_{N} - f_{N-1}\Vert_{\calC^2({\M} )} +\cdots +\Vert f_{1} - f\Vert_{\calC^2({\M} )} <
\sum_{j=1}^\infty \eps/2^j = \eps.
\ees
By the compactness of $S$, we can ensure that there is some $N$ and choice of cubes $\calR^j$'s so that $\calK\cup\calR^1\cup\cdots\cup\calR^N=\calS$. This proves the proposition.
\end{proof}

\section{Proof of the main result}\label{S:main}

The idea of the proof is to progressively make the strata of the CR-singular set of $M=f(\M)$ polynomially convex using 
the results of Section~\ref{S:pert}. We assume that the given $f: \M\hookrightarrow\Cn$ is in general position so that the conclusion of Proposition~\ref{P:PertII} holds for $f$. By the dimension count in Proposition~\ref{P:PertII}, $W_0=C_0=\emptyset$, and thus $S:=f(\calS)=f(\calS_1)$ is a totally real submanifold in $\Cn$. The proof splits into the following three cases.
\begin{itemize}
\item [(i)] $m\equiv 0,1 \pmod 4$. In this case, $S=C_2$. That is, all CR-singularities of $M$ are of Coffman nondegenerate type. 
\item [(ii)] $m\equiv 2 \pmod 4$. In this case, $S=C_2\sqcup C_1$ and $\dim_{\rl\!} C_1=0$. That is, all but finitely many CR singularities of $M$ are of Coffman nondegenerate type. 
\item [(iii)] $m\equiv 3 \pmod 4$. In this case, $S=C_2\sqcup (C_1\cap W_2)\sqcup (C_1\cap W_1)$, $\dim_{\rl\!} C_1=1$ and $\dim_{\rl\!} W_1=0$. That is, the set of Coffman degenerate CR singularities is a disjoint union of finitely many smooth simple closed
curves, of which all but finitely many points are Webster nondegenerate. 
\end{itemize}

We now prove the theorem case by case. 

\medskip

\noindent {\bf Case 1.} Assume that $\dim_{\rl\!}M=m\equiv 0,1 \pmod 4$. Since $S=C_2$ in this case, the result follows from Proposition~\ref{P:Amb} and Lemma~\ref{l.pertlemma}.

\medskip

\noindent {\bf Case 2.} Assume that $\dim_{\rl\!}M=m\equiv 2 \pmod 4$. Since $C_1$ is a $0$-dimensional submanifold of $S$, it is a finite union of points, and thus, is polynomially convex. By Lemma~\ref{L:PCNbhd}, there is a neighbourhood $U$ 
of $C_1$ in $\Cn$ such that $K=\overline{U}\cap S$ is polynomially convex. The result now follows from Proposition~\ref{P:Amb} (applied to this choice of $K$) and Lemma~\ref{l.pertlemma}.

\medskip

\noindent{\bf Case 3.} Assume now that $\dim_{\rl\!}M=m\equiv 3 \pmod 4$. Suppose, for the moment, that $C_1$ is a polynomially convex subset of $\Cn$. Then, by Lemma~\ref{L:PCNbhd}, there is a neighbourhood $U$ of $C_1$ in $\Cn$ 
such that $K=\overline{U}\cap S$ is polynomially convex. By Proposition~\ref{P:Amb}, there is a small perturbation $F$ of $f$ so that the set of CR singularities of $F$ is a polynomially convex compact subset of $\Cn$. The desired perturbation is then obtained by applying Lemma~\ref{l.pertlemma} to $F$.

In view of the above argument, it suffices to produce a $
\calC^3$-small perturbation of $f$ that is in general position (so that the conclusion of Proposition~\ref{P:PertII} holds) and for which the set $C_1$ (of Coffman degenerate points) is a polynomially convex compact set in $\Cn$. In the discussion below we use the fact that a disjoint union of finitely many smooth closed curves in $\C^n$ is either polynomially convex, or the nontrivial portion of its polynomially convex hull is a complex 1-dimensional variety in the complement of this union, see, e.g., Stout~\cite[Ch. 3]{St}.

\begin{lemma}\label{L:tang} Let $\Gamma$ be a disjoint union of smooth simple closed curves $\gamma_1,...,\gamma_k$ in $\Cn$, which is polynomially convex in $\Cn$. Let $\gamma$ be a smooth simple closed curve that is disjoint from $\Gamma$, and such that $\Gamma\cup\gamma$ is not polynomially convex. Assume that some subarc $\gamma^\sharp\subset\gamma$ is real-analytic. Then, there is a dense set of points $p\in\gamma^\sharp$ with the following property. There exists a $1$-dimensional complex subspace $\ell_p$ of $\Cn$ and neighbourhood $U_p$ of $p$ in $\Cn$ such that: for any compactly supported smooth map $h:U_p\rightarrow\Cn$ satisfying 
\begin{itemize}
\item [(a)] $h(p)=0$, 
\item [(b)] $F=\operatorname{Id}+h$ is a diffeomorphism such that $F(\gamma^\sharp\cap U_p)\subset U_p$. 
\item [(c)] $\Gamma\cup F(\gamma)$ is not polynomially convex, 
\end{itemize}
we have that, the curve $\wt\gamma=F(\gamma)$ is tangential to $\ell_p$ at $p$. 
\end{lemma}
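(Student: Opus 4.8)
The plan is to use the structure of the polynomial hull of a union of closed curves, together with the theory of analytic discs attached to such unions. By the cited result of Stout, since $\Gamma$ is polynomially convex but $\Gamma \cup \gamma$ is not, the set $V := \widehat{\Gamma \cup \gamma} \setminus (\Gamma \cup \gamma)$ is a purely $1$-dimensional complex analytic subvariety of $\C^n \setminus (\Gamma \cup \gamma)$, and the same holds for $\Gamma \cup \widetilde\gamma = \Gamma \cup F(\gamma)$ whenever the latter fails to be polynomially convex. The key classical fact (Wermer, Alexander, Stolzenberg; see also Stout, Ch.~3) is that $\overline V$ is a one-dimensional variety with boundary contained in $\Gamma \cup \gamma$, and near a real-analytic point of the curve the variety extends as a bordered Riemann surface whose boundary lies along the curve — in particular, at a point $p$ of the real-analytic subarc $\gamma^\sharp$ where $\overline V$ accumulates, the variety $\overline V$ has a well-defined tangent direction, and that tangent direction is a complex line $\ell_p$ containing the (real) tangent line to $\gamma^\sharp$ at $p$.

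First I would show that $\overline V$ must accumulate at some point of $\gamma^\sharp$: if $\overline V \cap \gamma^\sharp = \varnothing$, then $\overline V \setminus (\Gamma \cup (\gamma \setminus \gamma^\sharp))$ would be a variety with boundary avoiding the analytic arc, and one argues (using that $\Gamma$ alone is polynomially convex, so the boundary cannot be carried by $\Gamma$) that $V$ would be empty, a contradiction. Having fixed such an accumulation point, I would invoke the boundary regularity of $\overline V$ along $\gamma^\sharp$ (Alexander's theorem on the boundary behavior of one-dimensional varieties with real-analytic boundary): $\overline V$ is, near $p$, a finite union of bordered Riemann surfaces whose boundaries are the arc $\gamma^\sharp$, and each sheet is smooth up to the boundary. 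Pick one such sheet; it has a complex tangent line $\ell_p$ at $p$, and since the boundary $\gamma^\sharp$ of the sheet lies in that sheet, $T_p\gamma^\sharp \subset \ell_p$. The set of such good points $p$ is dense in $\gamma^\sharp$ because the accumulation set of $\overline V$ on $\gamma^\sharp$ is relatively closed with nonempty interior in the one-dimensional analytic-boundary setting — or, more robustly, one shrinks $\gamma^\sharp$ and repeats, so that the density claim follows by a Baire-type / connectedness argument along the analytic arc.

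The remaining point is the stability under the perturbation $F = \operatorname{Id} + h$. Given $p$ as above, choose $U_p$ small enough that $\overline V$ is a union of smooth sheets bordered by $\gamma^\sharp \cap U_p$, and define $\ell_p$ to be the complex tangent line of one such sheet at $p$ (if there are several, the conclusion must be read as: $\widetilde\gamma$ is tangent to \emph{the} line $\ell_p$ determined below — here I would actually want to choose $p$ so that there is a \emph{unique} sheet, which is again generic). Now suppose $h$ satisfies (a)--(c). Then $\widehat{\Gamma \cup F(\gamma)}$ has a nontrivial part $\widetilde V$, again a one-dimensional variety with boundary on $\Gamma \cup F(\gamma)$, and as above $\widetilde V$ must border $F(\gamma^\sharp)$ near $p = F(p)$ (the point $p$ is fixed by $F$, by (a)). A bordered Riemann surface has its boundary curve tangent to the surface; hence $F(\gamma)$ is tangent at $p$ to the complex tangent line of $\widetilde V$ there. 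The content of the lemma is that this complex line is the \emph{same} $\ell_p$ regardless of $h$: this is where I expect the main obstacle. One wants to argue that the complex tangent line to a one-dimensional variety bordering the real curve $F(\gamma^\sharp)$ at $p$ is forced, and the natural statement is that it is the unique complex line through $0$ containing $T_p F(\gamma^\sharp)$ — i.e.\ $\ell_p = \operatorname{span}_\C T_p F(\gamma^\sharp)$, a purely pointwise object. If so, the lemma is really the assertion that $\widetilde\gamma$ is tangent to $\operatorname{span}_\C(T_p \widetilde\gamma)$, which is vacuous, so the intended $\ell_p$ must instead be $\operatorname{span}_\C(T_p \gamma^\sharp)$, fixed by requiring $dh(p) = 0$ or at least that $dh(p)$ preserve this complex line — and then the real obstacle is extracting from (a)--(c) exactly the first-order information about $h$ at $p$ that is needed. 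I would resolve this by taking $\ell_p := \operatorname{span}_\C\{T_p\gamma^\sharp\}$, observing that $h(p) = 0$ together with the requirement (implicit in how the lemma is used downstream, via a further shrinking of the perturbation class) that $dh(p)$ be complex-linear on $T_p\gamma^\sharp$ forces $T_p\widetilde\gamma \subset \ell_p$; the hard analytic input (existence of the bordering variety, its boundary regularity) is then used only to guarantee that $\widetilde\gamma$ is genuinely tangent to \emph{a} complex line at $p$, namely $\operatorname{span}_\C T_p \widetilde\gamma = \ell_p$.
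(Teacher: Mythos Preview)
Your setup is right: the hull is a one-dimensional variety, and along the real-analytic arc $\gamma^\sharp$ boundary regularity gives, off a zero-length set, an analytic extension $Y_p$ of the hull variety across $\gamma^\sharp$ that is regular at $p$. Taking $\ell_p$ to be the complex tangent line of $Y_p$ at $p$ is also correct.

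The gap is in the stability step, and your final paragraph shows you have not found the mechanism. You correctly flag that the issue is to show the complex line at $p$ for $\wt\gamma$ is the \emph{same} $\ell_p$, but your proposed resolution (take $\ell_p=\operatorname{span}_\C T_p\gamma^\sharp$ and impose an extra condition on $dh(p)$) is circular: any smooth curve is tangent to the complex span of its own tangent vector, so that version is vacuous, and no condition on $dh(p)$ is assumed in the lemma. The actual argument uses a feature of the hypotheses you did not exploit: since $h$ is compactly supported in $U_p$, the perturbed curve $\wt\gamma$ coincides with $\gamma$ on a real-analytic subarc $\tau\subset\gamma^\sharp\cap U_p$ lying outside the support of $h$. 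Now run the hull/boundary-regularity argument for $\Gamma\cup\wt\gamma$: the new variety $\wt X$ extends analytically across some subarc $\tau'\subset\tau$. But $Y_p$ already contains $\tau'$, so by the identity principle for analytic curves the relevant branch of $\wt X$ \emph{is} $Y_p$ near $\tau'$. One then uses that $\Gamma$ is polynomially convex to see $\wt X$ has a single global branch; hence $\wt\gamma$ near $p$ lies in $Y_p$ and is tangent to $\ell_p$ at $p$. The point is not first-order data of $h$ at $p$, but analytic continuation through the unperturbed arc.
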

\begin{proof}  Since $K=\Gamma\cup \gamma$ is not polynomially convex in $\Cn$, there exists a purely $1$-dimensional analytic variety $X\subset\Cn\setminus K$ such that 
	\bes
		\widehat K=X\cup K.
	\ees
Due to the regularity of $\gamma$ and analyticity of $\gamma^\sharp$, there is a compact set of zero length 
$\varsigma\subset\gamma$ such that for each $p\in(\gamma^\sharp\setminus \varsigma)$ there is some 
neighbourhood $U_p\subset U$ of $p$, so that $X\cup(\gamma^\sharp\cap	U_p)$ extends past $\gamma^\sharp\cap U_p$ as a complex curve $Y_p$ that is regular along $(\gamma^\sharp\setminus \varsigma)\cap U_p$, and in particular, at $p$. Fix such a $p$, and let $\ell_p$ denote the (complex) tangent space of $Y_p$ at $p$. 

Now, let $h:U_p\rightarrow\Cn$ be a compactly supported map satisfying (a)-(c). Then, $\wt\gamma=F(\gamma)$ differs from $\gamma$ only in $U_p$ by a smooth arc $\sigma$ such that $(\wt\gamma\setminus\sigma)\cap U_p$ contains a real-analytic subarc, say $\tau$. Since $K=\Gamma\cup\wt\gamma$ is not polynomially convex, there is a one-dimensional variety $\wt X\subset\Cn\setminus\wt\gamma$ such that 
	\bes
		\widehat {\wt K}= \widetilde X\cup {\wt K}.
	\ees
By the regularity of $\wt\gamma$, there is a compact set of zero length $T\subset\wt\gamma$ such that $\wt X\cup\wt\gamma$ is a smooth manifold with boundary near each point in $\wt\gamma\setminus T$. In particular, this is true along some subarc $\tau'$ of $\tau\setminus(S\cup T)$. By the reflection principle, $\wt X\cup\tau'$ extends analytically past $\tau'$ as a complex curve, say $Z_{\tau'}$, that is regular along $\tau'$. Let $\wt X_1$ be the global branch of $\wt X$ that continues as $Z_{\tau'}$ past $\tau'$. But $Y_p$ also contains the arc $\tau'$ in its regular set, as $\tau'\subset (\gamma^\sharp\setminus S)\cap U_p$. Thus, by the identity principle for analytic curves, it must be that $Y_p$ is the analytic continuation past $p$ of $\wt X_1$. However, since $\Gamma$ is polynomially convex, every global branch of $\wt X$ must contain $\gamma$ in its closure, and in particular, the real-analytic subarc $\tau'$, which implies that $\wt X$ has only one global branch. Thus, the regular curve $\wt \gamma$ lies in $Y_p$ and must be tangential to $\ell_p$ at $p$.
\end{proof}

To construct a perturbation of $M=f(\M)$ for which the set $C_1$ is polynomially convex, we argue by induction 
on the number of connected components of $C_1$. Let $C_1$ be the disjoint union of $r$ smooth simple closed real curves $\gamma_1,...,\gamma_r$ in $M$. For any $\calC^3$-small perturbation $\wt f$ of $f$, the Coffman degenerate set $\wt C_1$ of $\wt f(\M)$ is also a disjoint union of simple closed curves $\wt\gamma_1,...,\wt\gamma_r$, where each $\wt\gamma_k$ is a $\calC^1$-small perturbation of $\gamma_k$. Suppose, for some $k\in\{1,...,r\}$,  $\gamma_1\cup\cdots\gamma_k$ is polynomially convex, but $\gamma_1\cup\cdots\gamma_{k+1}$ is not. We now show that there exists a $\calC^3$-small perturbation $\wt f$ of $f$, supported in a small neighbourhood of a 
Webster nondegenerate point $p\in\gamma_{k+1}$ so that  $\wt\gamma_1\cup\cdots\cup\wt\gamma_{k}\cup\wt\gamma_{k+1}=
\gamma_1\cup\cdots\cup\gamma_{k}\cup\wt\gamma_{k+1}$ is polynomially convex.

We denote $\gamma_{k+1}$ by $\gamma$. Let $p\in\gamma$ be a Webster non-degenerate point (recall that there are only finitely many Webster degenerate points in this dimensional setting). Locally, we may assume that $M$ is given by \eqref{E:PN},\eqref{E:PN1}, \eqref{E:PN2}, and \eqref{E:emb}. Using cutoff functions, we may get rid of the $O(4)$-terms in a small neighbourhood of $p$ in $\Cn$. This can be achieved via a $\calC^3$-small perturbation of $f$ that is compactly supported in a neighbourhood of $p$ (which, by Proposition~\ref{P:PertII} preserves all the features of the stratification of $\calS$). Then, a subarc $\gamma^\sharp$ of the perturbed $\gamma$ (henceforth denoted by $\gamma$) is real-analytic. We may now apply Lemma~\ref{L:tang} to obtain a dense set of points in $\gamma^\sharp$ that have the property given in the statement of the lemma. Without loss of generality, we may assume that $p$ is such a point, and $\ell_p$ is the one-dimensional complex subspace of $\Cn$ granted by Lemma~\ref{L:tang}. Recall the local expression for $\gamma$ at $p=0$ from Proposition~\ref{P:transC1}. The rest of the discussion will take place in these local coordinates. 

Let $\sigma$ denote the projection to the graphing coordinates $\mathbf x=(x_1,y_1, x_2, x_{m-1})$  
of the curve $\gamma$. Then, by Proposition~\ref{P:transC1}, $\sigma=\{\mathbf x:\mathscr C(\mathbf x)=0\}$, where $\mathscr C$ is as in \eqref{E:defC1}, and the $(m-1)\times m$ matrix $d\mathscr C(0)$ has rank $m-1$. Since $\gamma$ is tangential to $\ell_p$ at $p$, we have that $\ker d\mathscr C(0)$ lies in $\Pi_{\mathbf x}\ell_p$ --- the projection of $\ell_p$ onto the $(x_1,y_1,...,x_{m-1})$ coordinates, which is at most $2$-dimensional. We will produce a compactly supported $\calC^3$-small perturbation of $M$ (by perturbing the coefficients of the order $3$ terms in \eqref{E:PN1} and \eqref{E:PN2}) such that the perturbed $\gamma$, say $\wt\gamma$, passes through $p$, is regular there, but its not tangential to $\ell_p$ at $p$. By Lemma~\ref{L:tang}, this is the desired perturbation as $\Gamma\cup\wt\gamma$ cannot be polynomially convex. 

For the sake of simplicity, we denote $d\mathscr C(0)$ by the matrix $\mathbf R=(r_{j,k})$, $1\leq j\leq m-1$, $1\leq k,\leq m$. Since $p\in W_2\cap C_1$, by comparing \eqref{E:Web} and \eqref{E:TC1}, we have that the first two columns of $\mathbf R$ are linearly independent. Since the rank of $\mathbf R$ is $m-1$, dropping one of the other columns (not the first two) yields a non-singular $(m-1)\times(m-1)$-minor of $\mathbf R$. Since we can swap any of the $z_2,...,z_{m-1}$ variables via a biholomorphism, we assume without loss of generality that 
\begin{equation}\label{e.*}
\det
\begin{pmatrix}
r_{1,1}& \cdots & r_{1,m-1} \\
\vdots & \ddots & \vdots \\
r_{m-1,1} & \cdots & r_{{m-1},{m-1}}
\end{pmatrix}
\ne 0 .
\end{equation}

\begin{lemma}\label{l.linalg}
Suppose the vector
$(r_{m-1,1},  r_{m-1,2},...,r_{m-1,m-1})$ is non-zero. Then, the following holds. 
\begin{itemize}
\item [(i)] There exists a $j_0\in\{1,...,m-1\}$ such that the $(m-2)\times(m-2)$ submatrix obtained by removing the $j_0$-th and $m$-th columns and the last row of $\bf R$ is nonsingular. 
\item [(ii)] There exists a $k_0 \ne j_0$ in $\{1,...,m-1\}$ such that $(m-2)\times(m-2)$ submatrix obtained by removing the $k_0$-th and $m$-th columns and the 
$(m-2)$-th row of $\bf R$ is nonsingular. 
\end{itemize}
\end{lemma}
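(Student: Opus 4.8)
The plan is to reduce both assertions to the single $(m-1)\times(m-1)$ matrix $\mathbf R'$ obtained by deleting the last (the $m$-th) column of $\mathbf R$. By \eqref{e.*}, $\mathbf R'$ is invertible, so its rows $\rho_1,\dots,\rho_{m-1}\in\R^{m-1}$ form a basis; in particular the stated hypothesis --- that $(r_{m-1,1},\dots,r_{m-1,m-1})=\rho_{m-1}$ is nonzero --- is automatic. The $(m-2)\times(m-2)$ matrix in (i) is obtained from $\mathbf R'$ by first deleting the last row, leaving the rows $\rho_1,\dots,\rho_{m-2}$, and then deleting the $j_0$-th column; the one in (ii) is obtained from $\mathbf R'$ by first deleting the $(m-2)$-th row, leaving $\rho_1,\dots,\rho_{m-3},\rho_{m-1}$, and then deleting the $k_0$-th column. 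Since any $m-2$ of the $\rho_i$'s are linearly independent, the two intermediate $(m-2)\times(m-1)$ matrices, call them $\mathbf R''$ (rows $\rho_1,\dots,\rho_{m-2}$) and $\mathbf R'''$ (rows $\rho_1,\dots,\rho_{m-3},\rho_{m-1}$), each have rank $m-2$ and hence one-dimensional kernels.

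Next I would record the elementary fact that for an $r\times(r+1)$ matrix $A$ of rank $r$ with $\ker A=\operatorname{span}\{w\}$, $w\neq 0$, deleting the $j$-th column of $A$ produces an invertible $r\times r$ matrix precisely when $w_j\neq 0$: the unique (up to scaling) dependence among the columns of $A$ is $\sum_k w_k(k\text{-th column})=0$, so the remaining $r$ columns are independent exactly when the discarded one had nonzero coefficient. Applying this with $A=\mathbf R''$ and $A=\mathbf R'''$ and choosing nonzero generators $w\in\ker\mathbf R''$ and $w'\in\ker\mathbf R'''$, assertion (i) holds for any $j_0$ with $w_{j_0}\neq 0$, and assertion (ii) reduces to exhibiting some $k_0\neq j_0$ with $w'_{k_0}\neq 0$. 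Thus it suffices to produce indices $j_0\in\operatorname{supp}w$ and $k_0\in\operatorname{supp}w'$ with $j_0\neq k_0$.

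Such a mismatched pair fails to exist only when $\operatorname{supp}w=\operatorname{supp}w'=\{\ell\}$ for one common index $\ell$ (both supports are nonempty as $w,w'\neq 0$). But $\operatorname{supp}w=\{\ell\}$ forces the $\ell$-th column of $\mathbf R''$ to vanish, i.e. $r_{i,\ell}=0$ for $i=1,\dots,m-2$, and $\operatorname{supp}w'=\{\ell\}$ forces $r_{i,\ell}=0$ for $i\in\{1,\dots,m-3,m-1\}$; together these would make the entire $\ell$-th column of $\mathbf R'$ zero, contradicting $\det\mathbf R'\neq 0$. Hence the pair $(j_0,k_0)$ exists, and both (i) and (ii) follow. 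The one point requiring care --- and essentially the only subtlety in the argument --- is this coupling: $j_0$ is already constrained by (i), so $j_0$ and $k_0$ cannot be chosen independently, and the proof must rule out the single obstructing configuration via \eqref{e.*} before committing to either choice.
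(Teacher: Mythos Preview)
Your proof is correct and in fact cleaner than the paper's. Both arguments begin the same way: delete the $m$-th column to obtain the invertible $(m-1)\times(m-1)$ block $\mathbf R'$, and observe that after removing one further row the resulting $(m-2)\times(m-1)$ matrix has full rank, so some column can be dropped to leave an invertible $(m-2)\times(m-2)$ block. The divergence is in how the clash $k_0=j_0$ is resolved. The paper first fixes $j_0$ for (i), then finds a $k'$ that works for (ii); if $k'=j_0$, it allows a small modification of the entry $r_{m-1,k'}$ to force the $k'$-th column $T_{k'}$ to be nonzero and then performs a Steinitz-type exchange to locate $k_0\neq k'$. Strictly speaking this perturbation steps outside the lemma as stated, which is why the paper follows the lemma with the sentence ``After a small modification \dots\ we assume that the matrix $\mathbf R$ satisfies Lemma~\ref{l.linalg}.''

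Your approach avoids this entirely. By identifying the admissible $j_0$'s (resp.\ $k_0$'s) with the support of a generator of $\ker\mathbf R''$ (resp.\ $\ker\mathbf R'''$), you reduce the joint existence of $j_0\neq k_0$ to the impossibility of both supports being the same singleton $\{\ell\}$, which you rule out because it would force the entire $\ell$-th column of $\mathbf R'$ to vanish, contradicting~\eqref{e.*}. This gives the lemma exactly as stated, with no perturbation needed, and incidentally shows that the hypothesis $\rho_{m-1}\neq 0$ is redundant once~\eqref{e.*} is in force. The paper's argument is more hands-on and suffices for its purposes (since a small perturbation of the cubic coefficients is available anyway), but yours is the more self-contained linear-algebra statement.
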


\begin{proof} First, we prove (i). After removing the last row and the last column in $\bf R$, we obtain an $(m-2) \times (m-1)$ submatrix, which by~\eqref{e.*}, has full rank. 
Therefore, there exists a column $j_0$ such that after also removing that column, the remaining square minor is nonzero. 

For (ii), observe that the same argument as in (i) shows that there exists a $k'\in\{1,...,m-1\}$ such that the submatrix obtained by removing the $k'$-th and $m$-th columns 
and the 
$(m-2)$-th row of $\bf R$ is nonsingular. If $k' \ne j_0$ we are done. Suppose that $k'=j_0$. Let $T_j\in \R^{m-2}$, $j=1,\dots, m-1$, be the 
columns of the matrix $\bf R$ with the $m$-th column and $(m-2)$-th row removed. By construction, $\{T_j, j\ne k'\}$ are linearly independent, and 
therefore, 
$T_{k'}$ is a linear combination of $\{T_j, j\ne k'\}$. By a small modification of $r^{m-1}_{k'}$, if necessary, we may assume that $T_{k'} \ne 0$, 
and therefore, we may find $k_0\ne k'$ such that $\{T_j, j\ne k_0\}$ are linearly independent, and this proves (ii).
\end{proof}

After a small modification of the coefficients $\theta_j, \kappa_j, \pi_j,\psi_j^s, \varphi_j^s$, $m\leq j\leq n$, 
$2\leq s\leq m-1$, (which can be achieved by a compactly supported $O(3)$-small perturbation of $M$) we assume that the matrix $\bf R$ satisfies Lemma~\ref{l.linalg}. Denote the $j$-th row of $\mathbf R$ by $R^j$, and view it as the vector $(r_{j,1},...,r_{j,m})$ in $\rl^m$.
Let 
$\{\vec e_1,\dots, \vec e_m\}$ be the standard basis in $\R^m$. Recall that the generalized cross product of $R_1,...,R_{m-1}$ is given by 
\bes
\times\mathbf R=R^1\times\cdots\times R^{n-1} = \det 
\begin{pmatrix}
r_{1,1} & \cdots & r_{1,m} \\
\vdots & \ddots & \vdots \\
r_{m-1,1} & \cdots & r_{m-1,m} \\
\vec e_1 & \cdots & \vec e_m 
\end{pmatrix}.
\ees
It is known that $\times \mathbf R$ is orthogonal to the span of all $R^j$'s 

We now replace the entry 
$r^{m-1,m}$ with $r^{m-1,m} + \sigma$, where $\sigma\in\R$ is small. Denote the resulting matrix by ${\bf R}_{\sigma}$. Consider the vector
\bes 
\omega=\times \mathbf R_\sigma-\times\mathbf R=
\det\begin{pmatrix}
r_{1,1}& \cdots & r_{1,m} \\
\vdots & \ddots & \vdots \\
r_{m-1,1} & \cdots & r_{m-1,m} + \sigma \\
\vec e_1 & \cdots & \vec e_m 
\end{pmatrix} - 
\det\begin{pmatrix}
r_{1,1}& \cdots & r_{1,m} \\
\vdots & \ddots & \vdots \\
r_{m-1,1} & \cdots & r_{m-1,m}  \\
\vec e_1 & \cdots & \vec e_m 
\end{pmatrix}.
\ees
Then $\omega_{m}=0$. To compute the other components of $\omega$, denote by $[{\bf R}, j]$, the $(m-1)\times(m-1)$ submatrix 
obtained by removing column $j$ from $\bf R$. Then
$$
\omega_j = \det [{\bf R}, j] - \det [{\bf R}_{\sigma}, j] .
$$
Since the determinant is linear in each column, we obtain that $\omega_j=\sigma \Lambda_j$, where $\Lambda_j$ is, up to a sign, 
the determinant of the submatrix obtained by removing the $j$-th and $m$-th columns and $m-1$-th row from $\bf R$. For example,
\beas
\omega_1&=&
\det\begin{pmatrix}
r_{1,2}& \cdots & r_{1,m} \\
\vdots & \ddots & \vdots \\
r_{m-1,2} & \cdots & r_{m-1,m} + \sigma \\
\vec e_1 & \cdots & \vec e_m 
\end{pmatrix} -\det\begin{pmatrix}
r_{1,2}& \cdots & r_{1,m} \\
\vdots & \ddots & \vdots \\
r_{m-1,2} & \cdots & r_{m-1,m}\\
\vec e_1 & \cdots & \vec e_m 
\end{pmatrix}\\
&=& (-1)^{2m-1}\, \sigma\,
\det\begin{pmatrix}
r_{1,2}& \cdots & r_{1,{m-1}} \\
\vdots & \ddots & \vdots \\
r_{m-2,2} &\cdots & r_{m-2,m-1}
\end{pmatrix}=
(-1)^{2m-1}\, \sigma\,\Lambda_1 .
\eeas
By Lemma~\ref{l.linalg}, there exists $j_0$ such that $\Lambda_{j_0}\ne 0$. This shows that if we modify the matrix $\bf R$ by adding a 
small term $\sigma_{j_0}$ as above, the direction of the tangent vector to $\gamma$ changes, at least in the component $j_0$. 
A similar argument can be used to show that if we add a small $\sigma_{k_0}$ to $r^{m-2,m}$, then the direction of the tangent to $\gamma$ changes in the component $k_0 \ne j_0$. With a  suitable choice of $\sigma_{j_0}$ and $\sigma_{k_0}$ the resulting tangent vectors along with the initial tangent vector to $\gamma$ will be linearly independent.

As a result, we obtain that by a small modification of the entries $(r_{m-2,m}, r_{m-1,m})$, we obtain a $3$-dimensional space of possible tangent vectors to the corresponding $\sigma$ (which is the projection of the corresponding $\gamma$). Note from the expression for $d\mathscr C(0)$ (see \eqref{E:TC1} and \eqref{E:defC1}), that the perturbations of $(r_{m-2,m}, r_{m-1,m})$ can be matched 
by a perturbation of the coefficients of the cubic terms in the coefficients for $M$. This perturbation of coefficients can be achieved by a compactly supported  $C^3$-small perturbation of the initial $f:\M\rightarrow\Cn$. Since $\Pi_{\mathbf x}\ell_p$ is at most $2$-dimensional, we can produce a $\gamma$ this way that is not tangential to $\ell_p$ at $p$, and we are done. 

\bigskip

The proof of the approximation statement in Theorem~\ref{t.main} is similar to that in~\cite{GS1} and \cite{GS2}. 
To prove that $\cont(M)=\mathcal P(M)$, where $\mathcal P(M)$ is the uniform closure of holomorphic polynomials restricted to $M$, we use the following result due to O'Farrel-Preskenis-Walsch, see~\cite{OFPrWa84} or \cite[\S 6.3]{St}. {\em Let $X$ be a compact holomorphically convex set in $\Cn$, and let $X_0$ be a closed subset of $X$ for which $X\setminus X_0$ is a totally real subset of the manifold $\C^n\setminus X_0$. A function $f\in \cont(X)$ can be approximated uniformly on $X$ by functions holomorphic on a neighbourhood of $X$ if and only if $f|_{X_0}$ can be approximated uniformly on $X_0$ by functions holomorphic on $X$.} 

First, we apply the above result (or \cite{HW}) to $X=S$ and $X_0=\emptyset$ to obtain that any $f\in\cont(S)$ can be approximated 
uniformly on $S$ by functions holomorphic on a neighbourhood of $S$. Since by construction $S$ is itself polynomially convex, the Oka-Weil theorem allows us to conclude that $\cont(S)=\mathcal P(S)$. It follows that $X:=M$ and $X_0= S$ satisfy the hypothesis of the O'Farrel-Preskenis-Walsch result, and any $f\in\cont(M)$ can be approximated uniformly on $M$ 
by functions holomorphic on a neighbourhood of $M$. Once again, since $M$ is polynomially convex, by the Oka-Weil theorem,
$\mathcal \cont(M)=\mathcal P(M)$. This proves the theorem.

\end{document}